\newtheorem{thm}{Theorem}[section]
\newtheorem{lem}[thm]{Lemma}
\newtheorem{prop}[thm]{Proposition}
\newtheorem{cor}[thm]{Corollary}
\theoremstyle{definition}
\newtheorem{defn}[thm]{Definition}
\newtheorem{remark}[thm]{Remark}
\newtheorem{example}[thm]{Example}
\newtheorem{question}[thm]{Question}
\newcommand{\AAA}{\mathcal{A}}
\newcommand{\ZZ}{\mathbb{Z}}
\newcommand{\RR}{\mathbb{R}}
\newcommand{\Aut}{\mathit{Aut}}
\newcommand{\dT}{\rho_\infty}
\newcommand{\act}[2]{{#1} \curvearrowright {#2}}
\newcommand{\Act}[2]{\mathit{Act}({#1},{#2})}
\newcommand{\Alg}[2]{\mathit{Act}_{\mathit{alg}}({#1},{#2})}
\newcommand{\subG}[2]{\mathit{Sub}_{{#1}}({#2})}
\newcommand{\POT}{pseudo-orbit tracing}
\newcommand{\sep}{\mathit{sep}}
\title{Pseudo-Orbit Tracing and Algebraic actions of countable amenable groups}
\author{Tom Meyerovitch}
\address{Tom Meyerovitch\\
Department of Mathematics\\
Ben-Gurion University of the Negev}
\email{mtom@math.bgu.ac.il}
\thanks{The research leading to these results has received funding from the People Programme (Marie Curie Actions) of the European Union's Seventh Framework Programme (FP7/2007-2013) under REA grant agreement no. 333598
 and from the Israel  Science Foundation (grant no. 626/14)}
\keywords{Algebraic actions, expansiveness, group actions, pseudo-orbit tracing property, subshift of finite type}
\subjclass[2000]{22D40,37B05, 37B40}
\begin{document}

\maketitle

\begin{abstract}
Consider a countable amenable group acting by  homeomorphisms on a compact metrizable space.
Chung and Li asked if expansiveness and positive entropy of the action imply existence of an off-diagonal asymptotic pair.  For algebraic actions of polycyclic-by-finite groups, Chung and Li proved it does.
We provide examples showing that Chung and Li's result is near-optimal in the sense that  the conclusion fails for some non-algebraic action generated by a single homeomorphism, and for some algebraic actions of non-finitely generated abelian groups.
On the other hand,
we prove that every expansive action of an amenable group with positive entropy that has  the \POT~   property must admit  off-diagonal
asymptotic pairs.
 Using  Chung and Li's  algebraic characterization of expansiveness,  we prove  the \POT~   property for  a class of expansive algebraic actions.
This class includes every expansive principal algebraic action of an arbitrary countable group.
%We also construct  expansive homeomorphisms  and   expansive algebraic  actions  of %(non-finitely generated) abelian groups having positive topological entropy and no off-%diagonal
%asymptotic pairs. %and finitely generated solvable groups
%This complements a result and answers a question of Chung and Li.

\end{abstract}

\section{Introduction}
This paper is partly motivated by relatively recent work of Chung and Li \cite{MR3314515}
%,  an important contribution in the understanding of
about the dynamics of countable subgroups of automorphisms for compact groups, and algebraic actions in particular.
Part of the paper \cite{MR3314515} is an investigation of  the relation between topological entropy
 and asymptotic behavior of orbits for actions $\act{\Gamma}{X}$ where $X$ is a compact metrizable group and $\Gamma$ acts by group automorphisms \cite{MR3314515}.
The following question  was posed  and left open by Chung and Li  \cite{MR3314515}:

\begin{question}\label{ques:Chung_Li} %[ \cite{MR3314515},Question $1.1$]
Let  a countable amenable group $\Gamma$  act by  homeomorphisms  on  a compact
metric space $X$. Suppose the action   $\act{\Gamma}{X}$ is expansive and has positive topological entropy.  Must there be an off-diagonal
asymptotic pair in $X$?
\end{question}
Under the additional assumptions that  $\Gamma$ is  a  polycyclic-by-finite group, that $X$ is a compact abelian  group and  that $\act{\Gamma}{X}$ is an expansive action of $\Gamma$  by automorphisms,  Chung and Li obtained an affirmative answer to Question \ref{ques:Chung_Li} \cite[Theorem $1.2$]{MR3314515}.
On the other hand, as remarked in \cite{MR3314515}, there is a much older example due to Lind and Schmidt    of non-expansive $\mathbb{Z}$-actions (in fact, toral automorphisms) with positive entropy where  all asymptotic pairs are on the diagonal \cite[Expample $3.4$]{MR1678035}.

In addition to expansiveness, the two main assumptions in \cite[Theorem $1.2$]{MR3314515} concern  the algebraic nature of the action (action by automorphisms on  a compact group) and the group theoretic condition on $\Gamma$ (polycyclic-by-finite).
We show that these assumptions are not just artifacts of the proof method. Specifically, we prove:

\begin{thm}\label{thm:alg_exp_pos_ent_no_asymp}
There exists a totally disconnected compact abelian group $X$ and a countable amenable subgroup  $\Gamma \subset \Aut(X)$ so that $\act{\Gamma}{X}$ is expansive, has positive topological entropy and no off-diagonal
asymptotic pairs. In particular,  the group $\Gamma$ can be an abelian group (for instance $\bigoplus_{n \in \mathbb{Z}}(\mathbb{Z}/2\mathbb{Z})$).
%,or a finitely generated solvable group (for instance the lamplighter group  $\mathbb{Z} \wr (\mathbb{Z}/2\mathbb{Z})$).
\end{thm}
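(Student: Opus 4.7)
My plan is to construct $X$ explicitly via Pontryagin duality. Take $\Gamma = \bigoplus_{n \in \ZZ}\ZZ/2\ZZ$ with standard generators $\{e_n\}$, and set $X = \widehat{M}$ for a countable discrete $\FF_2[\Gamma]$-module $M$. This automatically makes $X$ a compact totally disconnected abelian group with $\Gamma$ acting by group automorphisms. I would first translate the three requirements into algebraic conditions on $M$: expansiveness corresponds to $M$ being finitely generated over $\FF_2[\Gamma]$ (yielding an embedding $X \hookrightarrow (\FF_2^d)^\Gamma$ as a closed invariant subgroup, where expansiveness is automatic); positive topological entropy, computed along the subgroup Følner sequence $\Gamma_N = \bigoplus_{|n|\leq N}\FF_2$, corresponds to the dimensions of the natural $\FF_2[\Gamma_N]$-approximations growing linearly in $|\Gamma_N|$; and the absence of off-diagonal asymptotic pairs corresponds, under the algebraic dictionary between asymptotic pairs and homoclinic points, to $X \cap (\FF_2^d)^{(\Gamma)} = \{0\}$, equivalently to the triviality of the annihilator of the defining relations in $\FF_2[\Gamma]$.

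A preliminary observation rules out cyclic constructions. In characteristic $2$, the ring $\FF_2[\Gamma]$ is the exterior algebra on $\{y_n = 1 + e_n : n\in\ZZ\}$ with $y_n^2=0$; more generally, by Frobenius every element of the augmentation ideal squares to zero. Hence for any non-unit $f$, one has $f^2 = 0$, so $f \in \mathrm{Ann}(f)$. Under the correspondence above this forces the principal sub-shift $X = \{x \in \FF_2^\Gamma : fx=0\}$ to contain the nonzero finitely supported element $f$, which is a non-trivial homoclinic point. Therefore $M$ must be non-cyclic.

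The main construction is therefore of a non-cyclic module. I would take $M$ to be a quotient of a free module $\FF_2[\Gamma]^d$ by an infinitely-generated submodule $N$, whose generators are arranged at strictly increasing scales in the filtration $\Gamma = \bigcup_N \Gamma_N$: the $k$-th generator is supported on coordinates with $|n| \leq N_k$ for a sequence $N_k \to \infty$ chosen to grow rapidly. The generators are tuned so that the codimension of $X|_{\Gamma_N}$ inside $(\FF_2^d)^{\Gamma_N}$ is $o(|\Gamma_N|)$, guaranteeing strictly positive topological entropy, while collectively being ``rich'' enough to make the annihilator trivial.

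The main obstacle will be the last property: showing that for every nonzero finitely supported $v$, some generator of $N$ fails to annihilate $v$. The strategy is to fix $v$ and observe that its support lies in some $\Gamma_{N_0}$; for $N_k$ much larger than $N_0$, the $k$-th generator involves ``fresh'' basis vectors $e_n$ outside $\mathrm{supp}(v)+\mathrm{supp}(v)$, and a direct computation (using that $v$ cannot be translation-invariant on any infinite subgroup) shows that $v$ does not pair to zero against it. This step uses essentially the non-finite-generation of $\Gamma$: there must be unboundedly many independent ``fresh directions'' to witness every finitely supported $v$, which is precisely what fails for the polycyclic-by-finite groups in the Chung--Li framework.
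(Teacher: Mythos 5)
Your high-level approach is the same as the paper's: realize $X$ as the Pontryagin dual of a countable $\FF_2[\Gamma]$-module that is finitely generated but not finitely presented, with defining relations introduced at increasing scales along the filtration of $\Gamma$ by finite subgroups, and then argue separately that the entropy stays positive and that every finitely supported annihilator is zero. That is exactly the skeleton of Section 4 of the paper.

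However, the ``preliminary observation'' contains a logical error that leads you astray. You correctly note that for any non-unit $f\in\FF_2[\Gamma]$ one has $f^2=0$, so the \emph{principal} quotient $\FF_2[\Gamma]/(f)$ always yields a nontrivial finitely supported (hence homoclinic) point. But you then conclude ``therefore $M$ must be non-cyclic.'' That inference does not follow: a cyclic module $\FF_2[\Gamma]/J$ has $J$ an arbitrary ideal, which need not be principal (and in fact need not be finitely generated). The paper's construction is precisely a \emph{cyclic} module: $M=\ZZ\Gamma/J$ with $J$ generated by $2$ together with the infinitely many elements $f_n=\sum_{g\in\Gamma_n}g$, where $\Gamma=\bigoplus_n\Gamma_n$ and the $\Gamma_n$ are finite. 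So $d=1$ suffices, and your self-imposed requirement $d\ge 2$ is unnecessary and would make the entropy and annihilator computations harder, not easier. Your argument only shows that $J$ cannot be principal; it does not rule out $M$ cyclic.

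Beyond that, the last two paragraphs are a plan rather than a proof. The paper's explicit choice $f_n=\sum_{g\in\Gamma_n}g$ is what makes everything go through cleanly: Lemma \ref{lem:X_E_ext} gives exact control of $X$ restricted to finite windows and yields the entropy formula $h=\prod_n(1-|\Gamma_n|^{-1})\log 2$, which is positive once $|\Gamma_n|$ grows fast enough; and the ``fresh direction'' argument is a one-line computation, because for $x\in X$ with finite support $F$ and $n$ chosen so that $g_n=1$ for all $g\in F$, the relation $\sum_{h\in\Gamma_n}x_{gh}=0$ has only the $h=1$ term landing inside $F$, forcing $x_g=0$. Your sketch gestures at both of these (codimension $o(|\Gamma_N|)$, pairing against fresh generators), but you would need to commit to a concrete family of relations and verify both estimates; as written, the phrase ``a direct computation \ldots shows that $v$ does not pair to zero against it'' is doing the actual work of the proof without supplying it. If you take $d=1$ and the relations $f_n=\sum_{g\in\Gamma_n}g$, the details fall into place as in the paper.
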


\begin{thm}\label{thm:exapnsive_pos_entropy_no_asymp}
There exists an expansive homeomorphism $T:X \to X$ of a totally disconnected compact metrizable space $X$ with positive topological entropy and no off-diagonal
asymptotic pair.
\end{thm}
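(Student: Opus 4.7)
The plan is to realize $T: X \to X$ as the shift map on a subshift over a finite alphabet $\AAA$; any such subshift is automatically expansive and supported on a totally disconnected compact metrizable space, and an off-diagonal asymptotic pair for the shift is precisely a pair of distinct sequences agreeing on the complement of a finite subset of $\ZZ$. Thus Theorem~\ref{thm:exapnsive_pos_entropy_no_asymp} reduces to producing a shift-invariant closed $X \subseteq \AAA^{\ZZ}$ of positive topological entropy in which no two distinct sequences agree cofinitely.

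I would build $X$ by a hierarchical marker construction. Fix rapidly growing scales $L_1 < L_2 < \cdots$ and for each $k$ specify a library $B_k \subseteq \AAA^{L_k}$ of admissible level-$k$ blocks satisfying three properties: (i) exponential abundance $\log|B_k| \ge \gamma L_k$ for a fixed $\gamma > 0$; (ii) unique parsing, meaning every admissible level-$(k{+}1)$ block decomposes uniquely as a concatenation of level-$k$ blocks; and (iii) scale-dependent rigidity: for every fixed $R$, all sufficiently high levels $k$ have no two distinct elements of $B_k$ coinciding outside a sub-window of length $\le 2R$. Take $X$ to be the closed shift-invariant set of sequences whose every finite factor embeds inside some $B_k$-block at every level $k$. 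Property (i) then yields positive topological entropy via a standard word-counting argument, while (ii) and (iii) together forbid asymptotic pairs: if $x \neq y$ agreed on $\ZZ \setminus [-R, R]$, then for $k$ with $L_k \gg R$ unique parsing would pin down level-$k$ blocks of $x$ and $y$ at a common position covering $[-R, R]$, producing two distinct elements of $B_k$ that agree outside a window of length $2R$, contradicting rigidity.

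The main obstacle is arranging the hierarchy $\{B_k\}$ so that the competing demands of exponential abundance, unique parsing, and scale-dependent rigidity are simultaneously met. A natural implementation is a random-substitution scheme, with $B_{k+1}$ consisting of concatenations of $B_k$-blocks drawn by a recognizable random rule carrying enough independence to inherit rigidity while preserving per-symbol entropy. Making the inductive step concrete, and verifying that $X$ is nonempty, is the technical heart of the construction. An alternative, more algebraic route would start from the $\Gamma$-action of Theorem~\ref{thm:alg_exp_pos_ent_no_asymp} with $\Gamma = \bigoplus_{n \in \ZZ}(\ZZ/2\ZZ)$, promote it via the shift automorphism of $\Gamma$ to a $\Gamma \rtimes \ZZ$-action, and restrict to the $\ZZ$-factor; however, this route carries its own burden, since neither expansiveness nor the absence of asymptotic pairs transfers automatically from the full group action to the single-generator subaction.
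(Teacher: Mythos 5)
Your blueprint---realize $T$ as the shift on a hierarchically built subshift, with per-level libraries $B_k$ satisfying exponential abundance, unique parsing, and a scale-dependent rigidity---is the right shape, and the paper's construction in Section~\ref{sec:positive_exp_no_asymp} is indeed of this form (stated there for any countably infinite residually finite amenable $\Gamma$, with $\Gamma=\ZZ$ giving Theorem~\ref{thm:exapnsive_pos_entropy_no_asymp}). But the proposal stops exactly where the difficulty starts: you acknowledge that exhibiting $\{B_k\}$ with all three properties simultaneously is ``the technical heart'' and you only gesture at a ``random-substitution scheme.'' That is a genuine gap, not a deferred routine verification, because abundance and unique parsing both push the libraries to be rich, while your rigidity (iii) pushes them to be sparse, and you give no mechanism that reconciles the two.

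The paper's mechanism for rigidity, which your proposal does not identify, is a checksum over $\AAA = \ZZ/3\ZZ$. A level-$(n+1)$ block is a concatenation of level-$n$ sub-blocks from $A_n$, with the sub-block in the distinguished slot fixed to a marker $w^{(n)}$ (and $w^{(n)}$ forbidden in the other slots, which yields unique parsing via Lemma~\ref{lem:R_n_trans_disjoint}), and, crucially, the componentwise $\ZZ/3$-sum of the sub-blocks is pinned to a fixed vector $s^{(n)}$ chosen by a pigeonhole maximization. This checksum is what drives Lemma~\ref{lem:X_n_assympt}: if two admissible configurations disagree only on a finite set $F$ meeting each $\Gamma_n$-coset at most once, then every checksum equation involves at most one disagreeing coordinate, so the fixed sum forces that coordinate to agree as well; since $\bigcap_n \Gamma_n = \{1\}$, every finite $F$ is eventually separated by $\Gamma_n$-cosets, ruling out off-diagonal asymptotic pairs. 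The same constraint costs only a controlled fraction of entropy, which is what Lemmas~\ref{lem:A_n_big}--\ref{lem:h_X_n_lower_bound} quantify: provided the index growth $a_n = [\Gamma_{n-1}:\Gamma_n]$ is fast enough, $|A_n|$ stays superexponentially large and $h(\act{\Gamma}{X})>0$. Without such a mechanism your step from ``two distinct $B_k$-blocks agree outside a $2R$-window'' to a contradiction has nothing to lean on, and the claim that a generic random substitution ``inherits rigidity while preserving per-symbol entropy'' would itself require an argument at least as involved as the paper's.

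Your alternative route via Theorem~\ref{thm:alg_exp_pos_ent_no_asymp} and a semidirect product $\Gamma\rtimes\ZZ$ is correctly flagged as problematic: neither expansiveness nor triviality of the homoclinic group passes from the full action to the $\ZZ$-subaction, and the paper does not attempt it.
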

Theorem \ref{thm:exapnsive_pos_entropy_no_asymp} might be considered a distant cousin of a striking result due to Ornstein and Weiss  stating that every transformation is bilaterally deterministic \cite{MR0382600}.

In attempt to understand when Chung and Li's question has an affirmative answer, we are led to  explore the \POT~ property. This fundamental dynamical property turns out to  ensure an affirmative answer to Question \ref{ques:Chung_Li}.
The \POT~ property was first introduced and studied by R. Bowen \cite{MR0482842} for $\mathbb{Z}$-actions, motivated by the study of Axiom A maps.
Walters and others continued this study and obtained further consequences of \POT~  \cite{MR518563}.
%: ``Rufus Bowen has stated that the tracing of pseudo orbits is the
%most important dynamical property of Axiom A maps''.
Chung and Lee recently considered the pseudo-orbit tracing property for actions of (finitely generated) countable amenable groups and showed that topological stability and other important consequences of  the \POT~ property  hold  in this more general setting \cite{1611.08994}.
In relation with Question \ref{ques:Chung_Li} above we have the following:

\begin{thm}\label{prop:finite_type_entropy_off_diagonal}
Let $\Gamma$ be a countable amenable group.  % and let $X$  be a compact metrizable space.
% If $\alpha \in \Act{\Gamma}{X}$  is expansive,
Every expansive $\Gamma$-action on a compact  metrizable  space that satisfies the \POT~ property and has positive topological entropy admits an off-diagonal  asymptotic pair. % in $X\times X$.
\end{thm}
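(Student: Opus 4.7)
The plan is to carry out a pigeonhole-and-splice argument modeled on the classical proof that positive-entropy subshifts of finite type over $\Gamma$ have asymptotic pairs, using the \POT~ property in place of the SFT local rules. Fix an expansive constant $\epsilon_0>0$, pick $\epsilon\in(0,\epsilon_0/4)$, and let $\delta=\delta(\epsilon)>0$ be the corresponding \POT~ constant, so that every $\delta$-pseudo-orbit is uniquely $\epsilon$-shadowed. Choose a finite $\delta$-dense set $A\subset X$ and consider the equivariant Voronoi quantization $\phi:X\to A^\Gamma$, under which matching symbols at $\gamma$ force orbit positions to lie within $2\delta$.

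Next, fix a right-Folner sequence $(F_n)$ in $\Gamma$ together with a finite symmetric generating set $S$, and set $\partial_S F_n=\{\gamma\in F_n:S\gamma\not\subset F_n\}$, so that $|\partial_S F_n|/|F_n|\to 0$ by the Folner property. Positive topological entropy provides $c>0$ such that $X$ realizes at least $\exp(c|F_n|)$ distinct names $\phi(x)|_{F_n}$, while the number of possible shell names is at most $|A|^{|\partial_S F_n|}=\exp(o(|F_n|))$. A pigeonhole argument then produces, for all large $n$, distinct $x_n,x'_n\in X$ whose $\phi$-names coincide on $\partial_S F_n$ and differ at some interior $\gamma^*_n\in F_n\setminus\partial_S F_n$. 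Now define the spliced pseudo-orbit $f_n:\Gamma\to X$ by $f_n(\gamma)=\gamma x'_n$ for $\gamma\in F_n\setminus\partial_S F_n$ and $f_n(\gamma)=\gamma x_n$ otherwise. The only candidate discontinuities occur at the interface inside $\partial_S F_n$, and there the matching $\phi$-names force the jumps to be of order $\delta$; so $f_n$ is an $O(\delta)$-pseudo-orbit. Applying \POT~ yields $y_n\in X$ with $d(\gamma y_n,f_n(\gamma))\le\epsilon$ for every $\gamma\in\Gamma$. Consequently $d(\gamma x_n,\gamma y_n)\le\epsilon$ for all $\gamma$ outside the finite set $F_n\setminus\partial_S F_n$, while at $\gamma^*_n$ the point $y_n$ sits close to $\gamma^*_n x'_n$, so $d(\gamma^*_n x_n,\gamma^*_n y_n)$ is bounded below by a positive constant (after calibrating $\delta$ and $\epsilon$ so the symbolic separation beats the shadowing error).

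To finish, I would upgrade this one-scale data to a genuine asymptotic pair by iterating the construction with $\epsilon=\epsilon_k\to 0$, translating each pair $(x_{n_k},y_{n_k})$ by $\gamma^*_{n_k}$ to place the disagreement at the identity, and extracting a subsequential limit in $X\times X$ by compactness. Expansiveness preserves the off-diagonal separation at the identity in the limit, while the Folner condition together with the diagonal choice of $\epsilon_k\to 0$ and a careful choice of translations deep inside the Folner sets forces the distances at each fixed $\eta\neq e$ to decay to zero, yielding an off-diagonal asymptotic pair.

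The main obstacle is precisely this last step. A single application of \POT~ only delivers $\epsilon$-shadowing at a fixed scale, not decay to zero at infinity, so genuine asymptotic decay emerges only from the interplay of an $\epsilon_k\to 0$ diagonal, a well-chosen translation deep inside the Folner sets at each stage, and expansiveness to prevent the limit pair from collapsing to the diagonal. Balancing these three ingredients --- keeping the off-diagonal distance uniformly bounded below while simultaneously shrinking all other distances to zero along the extracted subsequence --- is the genuine technical heart of the argument.
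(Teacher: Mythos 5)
Your core mechanism is the same as the paper's: an entropy-versus-boundary pigeonhole to find two points whose orbits nearly coincide on $\partial_S F_n$ but differ inside, a spliced $(S,\delta)$-pseudo-orbit, and \POT~ to produce a tracing point. That part is right and would work. The problem is the last third of your argument, where you treat the production of an actual asymptotic pair as the ``genuine technical heart'' requiring an $\epsilon_k\to 0$ diagonal, translations deep into the F\o lner sets, and a compactness limit.

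None of that is needed, and the reason is an observation you have overlooked: in an expansive action, a single application of \POT~ already yields an asymptotic pair. Once $y$ $\epsilon$-traces the spliced pseudo-orbit, you have $d(\alpha_g(x),\alpha_g(y))<\epsilon<$ (expansive constant) for every $g$ outside the \emph{finite} set $F_n$. By compactness and expansiveness there is, for every $\epsilon'>0$, a finite $K\subset\Gamma$ such that $\max_{k\in K}d(\alpha_k(x'),\alpha_k(y'))<2\epsilon$ forces $d(x',y')<\epsilon'$. Applying this to $(\alpha_g(x),\alpha_g(y))$ shows $d(\alpha_g(x),\alpha_g(y))<\epsilon'$ for all $g$ outside the finite set $F_nK^{-1}$, so the distance in fact tends to zero. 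The off-diagonal property then comes from the triangle inequality using the $(F_n,2\epsilon)$-separation of $x$ and $x'$ together with the $\epsilon$-tracing on $F_n$. This is exactly the paper's (short) conclusion.

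Beyond being unnecessary, your iteration step is also not clearly sound as sketched. After translating each pair by $\gamma_k^*$ to center the disagreement at the identity, the region of $\epsilon_k$-closeness becomes $\Gamma\setminus F_{n_k}(\gamma_k^*)^{-1}$, and these translated finite sets can drift and grow; there is no guarantee that a fixed $\eta\neq 1$ eventually lies in the small-distance region along the subsequence, so the compactness limit need not be asymptotic. So the part you flagged as the heart of the argument is both avoidable and incompletely justified. Replacing it with the expansiveness-plus-compactness step above makes your proof correct and essentially identical in structure to the paper's.
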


%As an application to algebraic actions, we show the following:
The \POT~ property is of interest in the context of algebraic actions.
A particular instance of one of our result applies to \emph{principal algebraic actions}, a well-known class of algebraic actions:
\begin{thm}\label{thm:priniciple_alg_pot}
Let $\Gamma$ be a countable group. Every expansive principal algebraic $\Gamma$-action satisfies the \POT~ property.
\end{thm}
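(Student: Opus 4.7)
The plan is to leverage Chung--Li's algebraic characterization of expansiveness for principal algebraic actions and construct the tracing orbit directly via the $\ell^1$-inverse of the defining element. Realize the action as the shift of $\Gamma$ on
\[
X_f \;=\; \Bigl\{x \in (\RR/\ZZ)^\Gamma : \sum_k f_k\, x(hk) = 0 \text{ for every } h \in \Gamma\Bigr\},
\]
where $f = \sum_k f_k k \in \ZZ\Gamma$. By the cited Chung--Li characterization, expansiveness is equivalent to $f$ being invertible in $\ell^1(\Gamma)$; fix an inverse $w \in \ell^1(\Gamma)$ with $wf = fw = 1$.

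Given a $(\delta,E)$-pseudo-orbit $\phi : \Gamma \to X_f$, first compress it into a scalar field $\Phi : \Gamma \to \RR/\ZZ$ by $\Phi(g) := \phi(g)(e)$. Iterating the pseudo-orbit inequality along a chain of length $n$ in $E$ yields the uniform approximation $|\phi(g)(k) - \Phi(k^{-1}g)|_{\RR/\ZZ} \leq \ell_E(k)\, \delta$, where $\ell_E(k)$ is the word length of $k$ in $E$. Substituting into the defining relation $\sum_k f_k\, \phi(g)(k) = 0$ of $X_f$ shows that the error field $\eta(g) := \sum_k f_k\, \Phi(k^{-1}g) \in \RR/\ZZ$ satisfies $\|\eta\|_\infty \leq n \|f\|_1 \delta$, provided $n$ is large enough that $\mathrm{supp}(f)$ lies in word length $\leq n$.

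Next, lift $\eta$ to $\tilde\eta : \Gamma \to \RR$ with $\|\tilde\eta\|_\infty = \|\eta\|_\infty$, and form $\xi(g) := \sum_j w_j\, \tilde\eta(j^{-1}g)$. Young's convolution inequality gives $\|\xi\|_\infty \leq \|w\|_1 \|\tilde\eta\|_\infty$, and the identity $wf = 1$ translates to the pointwise identity $\sum_k f_k\, \xi(k^{-1}g) = \tilde\eta(g)$. Setting $\Phi'(g) := \Phi(g) - \xi(g) \pmod \ZZ$ gives $\sum_k f_k\, \Phi'(k^{-1}g) \equiv 0 \pmod \ZZ$ for every $g$, whence a change of variable shows that $x(g) := \Phi'(g^{-1})$ is an honest element of $X_f$. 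The shadowing bound $|(g \cdot x)(h) - \phi(g)(h)|_{\RR/\ZZ} \leq \|\xi\|_\infty + \ell_E(h)\, \delta$, valid on the finite window $F$ that determines $\epsilon$-closeness in the product metric on $X_f$, yields $\epsilon$-tracing once $\delta$ is chosen small enough in terms of $\epsilon$, $\|w\|_1$, and $f$.

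The principal obstacle is bookkeeping: fixing left/right convolution conventions so that $wf = 1$ in $\ZZ\Gamma$ translates to the correct pointwise identities on $\Gamma$, and choosing the chain length $n$ large enough to simultaneously cover $\mathrm{supp}(f)$ and the window $F$ relevant to the prescribed $\epsilon$. Beyond that, the argument is just Young's inequality applied to $w \in \ell^1$ and the uniformly bounded lifted error; in particular no amenability of $\Gamma$ is used, consistent with the theorem applying to arbitrary countable $\Gamma$.
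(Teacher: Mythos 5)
Your proposal is correct in its essential architecture and captures the key mechanism — the $\ell^1$-inverse of $f$ (which exists exactly when the action is expansive, by Chung--Li) is used to correct a small defect — but it arrives there by a genuinely different route than the paper. The paper's proof of Theorem \ref{thm:X_A_SFT} first lifts the entire pseudo-orbit $(x^{(g)})_g$ coordinatewise to a family $(y^{(g)})_g$ in $([-1,1]^k)^\Gamma$ (Lemma \ref{lem:lift_compt}), then applies $A^*$ to produce \emph{integer-valued} sequences $z^{(g)}$, uses the fact that nearby integers must coincide to establish a compatibility condition $z^{(g)}_{h^{-1}f}=z^{(hg)}_f$, collapses the family to a single $z\in\ell^\infty(\Gamma,\ZZ^k)$ via $z_{g^{-1}}:=z^{(g)}_1$, and only then applies $B=(A^*)^{-1}$ and projects. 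You instead compress the pseudo-orbit to its $e$-coordinate $\Phi(g)=\phi(g)(e)$, form the defect $\eta=f*\Phi$ directly in $\RR/\ZZ$ (which is uniformly small by the chain estimate), lift only the defect $\eta$ rather than the whole pseudo-orbit, apply $w=f^{-1}$ to produce a correction $\xi$, and subtract. This buys a shorter argument in the scalar case $k=1$: there is no need for the integrality trick or the compatibility lemma, since the defect already lives in a small ball of $\RR/\ZZ$ and lifts canonically. The paper's formulation handles the general $M_k(\ZZ\Gamma)$ case in one stroke, but your construction would adapt to that case with only notational changes. Both proofs correctly avoid any amenability hypothesis.

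One caveat worth surfacing: the step you wave at as bookkeeping — converting the pseudo-orbit inequality, which is a bound in the chosen metric $d$ on $X_f$, into the coordinatewise estimate $\dT(\phi(g)(k),\Phi(k^{-1}g))\le\ell_E(k)\delta$ — is not entirely free. Each link in the chain incurs an error measured at a shifted coordinate, and in a product metric that error is weighted by the coordinate. You therefore need to choose the pseudo-orbit parameters so that the metric bound $d<\delta'$ controls $\dT$ on a finite window covering all intermediate coordinates of all relevant chains (those reaching $\supp(f)$ and the $\epsilon$-window $F$). This is exactly the content of the paper's choice of $\delta'$, $K$, and $W$ around \eqref{eq:delta_prime} and \eqref{eq:W_delta_prime}. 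It does work out, and your instinct that it is a finiteness condition is right, but it is the most delicate part of the argument and deserves to be made explicit rather than folded into ``choose $n$ large enough.''
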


In the paper \cite{MR3314515} Chung and Li also provided an  affirmative answer to Question \ref{ques:Chung_Li} for principal algebraic actions for any countable amenable group $\Gamma$ (in fact they proved much more, see Remark \ref{rem:princ_alg} below).
This is also a  conclusion of Theorem \ref{thm:priniciple_alg_pot} combined with Theorem \ref{prop:finite_type_entropy_off_diagonal}.

The organization of the paper is as follows: In section \ref{sec:pot} we recall the \POT~ property, subshifts and subshifts of finite type in particular. We also prove Theorem \ref{prop:finite_type_entropy_off_diagonal}. In section \ref{sec:pot_alg} we discuss the pseudo-orbit tracing for algebraic actions and derive some consequences, in particular the proof of  Theorem \ref{thm:priniciple_alg_pot}. In section \ref{sec:algebraic_exp_no_asymp}
 we prove Theorem \ref{thm:alg_exp_pos_ent_no_asymp}, providing a negative answer to Question \ref{ques:Chung_Li} within the class of algebraic actions. In section \ref{sec:positive_exp_no_asymp}
we prove Theorem \ref{thm:exapnsive_pos_entropy_no_asymp}.

\textbf{Acknowledgements:} I thank Nishant Chandgotia, Nhan-Phu Chung and  Hanfeng Li for valuable comments on an early version of this paper.

\section{The \POT~ property for actions of countable groups}\label{sec:pot}

Throughout this paper, $\Gamma$ will be a countable group and $X$ will be a compact metrizable space, equipped with a metric $d:X\times X \to \mathbb{R}_+$.
We denote by $\Act{\Gamma}{X}$ the space of all continuous actions  $\act{\Gamma}{X}$. The space $\Act{\Gamma}{X}$ inherits a Polish topology  from $\mathit{Homeo}(X)^\Gamma$. Specifically,  given an enumeration of $\Gamma = \{g_1,g_2 \ldots\}$, we have the  following metric on $\Act{\Gamma}{X}$:
$$ \rho(\alpha,\beta) := \sum_{n=1}^\infty \frac{1}{2^n} \sup_{x \in X}d\left(\alpha_{g_n}(x),\beta_{g_n}(x)\right).$$
% being a closed  subset of $\mathit{Homeo}(X)^\Gamma$, is  a polish space *** Check! ***.
In this paper  when the action $\alpha \in \Act{\Gamma}{X}$ is clear from the context we will sometimes  write $g\cdot x$ instead of $\alpha_g(x)$.

\begin{defn}(Compare with \cite[Definition $2.5$]{1611.08994})
Fix $S \subset \Gamma$ and $\delta >0$.
A \emph{$(S,\delta)$ pseudo-orbit}  for $\alpha \in \Act{\Gamma}{X}$ is a $\Gamma$-sequence $(x_g)_{g \in \Gamma}$ in $X$ such that $d(\alpha_s(x_{g}),x_{sg})< \delta$ for all $s \in S$ and $g \in \Gamma$.
We say that a  pseudo-orbit $(x_g)_{g \in \Gamma}$ is \emph{$\epsilon$-traced} by $x \in X$ if $d(\alpha_g(x),x_g) <\epsilon$ for all $g \in \Gamma$.
\end{defn}

\begin{defn}
An action $\alpha \in \Act{\Gamma}{X}$ has the \emph{\POT~ property (abbreviated by p.o.t.)} if for every $\epsilon >0$ there exists $\delta >0$ and a finite set $S \subset \Gamma$ such that every $(S,\delta)$ pseudo-orbit is $\epsilon$-traced by some point $x \in X$.
\end{defn}

\begin{defn}(Compare with \cite[Definition $2.1$]{1611.08994})
We say that $\alpha \in \Act{\Gamma}{X}$ is \emph{topologically stable} if
for every $\epsilon >0$
there exists an open neighborhood $\mathcal{U} \subset \Act{\Gamma}{X}$ of $\alpha$ such that for every $\beta \in \mathcal{U}$ there exists a continuous $f:X \to X$ so that $\alpha_g \circ f= f \circ \beta_g$ for every $g \in \Gamma$ and
$$\sup_{x \in X}d(f(x),x) \le \epsilon.$$
\end{defn}

Chung and Lee \cite[Theorem $2.8$]{1611.08994} proved the following:

\begin{thm}\label{thm:pot_stable}
If an action $\alpha \in \Act{\Gamma}{X}$ is expansive and satisfies p.o.t, then it is topologically stable.
Moreover if $\eta >0$ is an expansive constant for $\alpha$ then:
\begin{enumerate}
\item  For every $0 < \epsilon < \eta$ there exists an open neighborhood $\alpha \in \mathcal{U} \subset \Act{\Gamma}{X}$ so that for every $\beta \in \mathcal{U}$ there exists a unique map $f:X \to X$ so that $\alpha_g \circ f = f \circ \beta_g$ for every $g \in \Gamma$ and $\sup_{x \in X}d(f(x),x) \le \epsilon$.
\item If furthermore $\beta$ as above is expansive with expansive constant  $2\epsilon$, then the conjugating map $f$ above is injective.
\end{enumerate}
\end{thm}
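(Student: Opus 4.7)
The plan is to manufacture $f$ by applying the p.o.t.\ property to $\beta$-orbits, viewed as pseudo-orbits of $\alpha$, and then extract uniqueness, equivariance, continuity and injectivity from expansiveness. Fix $0<\epsilon<\eta$ and, for safety in the uniqueness step below, replace $\epsilon$ by a slightly smaller value so that $2\epsilon\le\eta$ (the bound $d(f(x),x)\le\epsilon$ claimed in the statement still holds). Apply p.o.t.\ with tracing precision $\epsilon$ to obtain $\delta>0$ and a finite $S\subset\Gamma$, and set
$$\mathcal{U}:=\bigl\{\beta\in\Act{\Gamma}{X}:\ \sup_{x\in X}d(\alpha_s(x),\beta_s(x))<\delta\ \text{for all } s\in S\bigr\},$$
an open neighborhood of $\alpha$ because $S$ is finite. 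For $\beta\in\mathcal{U}$ and $x\in X$, the $\beta$-orbit $(\beta_g(x))_{g\in\Gamma}$ is an $(S,\delta)$-pseudo-orbit for $\alpha$, since $d(\alpha_s(\beta_g(x)),\beta_{sg}(x))=d(\alpha_s(\beta_g(x)),\beta_s(\beta_g(x)))<\delta$ for all $s\in S$ and $g\in\Gamma$. By p.o.t.\ there exists $f(x)\in X$ with $d(\alpha_g(f(x)),\beta_g(x))<\epsilon$ for every $g\in\Gamma$; taking $g=e$ yields $d(f(x),x)<\epsilon$.

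Next I would verify the structural properties of $f$. Uniqueness of $f(x)$ is immediate: any two candidates $y_1,y_2$ satisfy $d(\alpha_g(y_1),\alpha_g(y_2))<2\epsilon\le\eta$ for all $g$, forcing $y_1=y_2$ by expansiveness. Equivariance then follows from this uniqueness, because both $\alpha_g(f(x))$ and $f(\beta_g(x))$ $\epsilon$-trace the pseudo-orbit $(\beta_{hg}(x))_{h\in\Gamma}$ based at $\beta_g(x)$:
$$d(\alpha_h(\alpha_g(f(x))),\beta_{hg}(x))=d(\alpha_{hg}(f(x)),\beta_{hg}(x))<\epsilon,$$
and similarly for $f(\beta_g(x))$. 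For continuity, suppose $x_n\to x$ and let $y$ be any subsequential limit of $f(x_n)$; passing to the limit in $d(\alpha_g(f(x_{n_k})),\beta_g(x_{n_k}))<\epsilon$ and using continuity of $\alpha_g$ and $\beta_g$ gives $d(\alpha_g(y),\beta_g(x))\le\epsilon$ for all $g$, and the same expansiveness argument (with the slack absorbed by the initial shrinking of $\epsilon$) yields $y=f(x)$. Hence $f$ is continuous.

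Finally, for part (2), suppose $\beta$ is expansive with expansive constant $2\epsilon$ and $f(x_1)=f(x_2)$. Equivariance gives $f(\beta_g(x_1))=f(\beta_g(x_2))$ for every $g\in\Gamma$, so the triangle inequality
$$d(\beta_g(x_1),\beta_g(x_2))\le d(\beta_g(x_1),f(\beta_g(x_1)))+d(f(\beta_g(x_2)),\beta_g(x_2))<2\epsilon$$
combined with expansiveness of $\beta$ forces $x_1=x_2$. The main technical nuisance is not conceptual but bookkeeping: strict vs.\ non-strict inequalities in the definitions of pseudo-orbit, tracing, and expansive constant must be reconciled, and one must ensure enough slack between the tracing precision $\epsilon$ and the expansive constant $\eta$ so that every uniqueness invocation goes through. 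This is handled once and for all by the initial mild shrinking of $\epsilon$.
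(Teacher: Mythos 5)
Your argument follows the same construction as the paper's sketch (itself attributed to Chung and Lee): choose $\delta, S$ via p.o.t., observe that $\beta$-orbits of actions $\beta$ uniformly $\delta$-close to $\alpha$ on $S$ are $(S,\delta)$-pseudo-orbits for $\alpha$, define $f(x)$ as the $\alpha$-tracing point, and then read off uniqueness, equivariance, continuity and injectivity from expansiveness. The details you supply are essentially what the sketch omits, and items (2) and the topological-stability conclusion are handled correctly.

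There is, however, a genuine gap in the uniqueness assertion of item (1). You shrink $\epsilon$ to an $\epsilon'$ with $2\epsilon'\le\eta$ and construct $f$ with $\sup_x d(f(x),x)\le\epsilon'\le\epsilon$; that gives existence and, by expansiveness, uniqueness among maps with $\sup d(\cdot,\mathrm{id})\le\epsilon'$. But the statement demands uniqueness among \emph{all} conjugating maps with $\sup d(f,\mathrm{id})\le\epsilon$, and when $\eta/2<\epsilon<\eta$ two such maps $f_1,f_2$ only satisfy $\sup_g d(\alpha_g f_1(x),\alpha_g f_2(x))\le 2\epsilon$, which may exceed $\eta$, so expansiveness of $\alpha$ does not apply. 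Your closing remark that this is "handled once and for all by the initial mild shrinking of $\epsilon$" is therefore not quite right: the shrinking fixes existence and the tracing-point uniqueness but not the claimed uniqueness at scale $\epsilon$; and the word "slightly" is also misleading, since for $\epsilon$ near $\eta$ one must roughly halve it. (The paper's sketch hedges at exactly this point with "if $\epsilon$ is sufficiently small"; the cleanly provable range is the Walters-style $\epsilon\le\eta/2$, so either read the statement with that restriction or supply a separate argument for uniqueness at the larger scale.)
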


Here is a quick sketch of  proof for Theorem \ref{thm:pot_stable}: If $\beta \in \Act{\Gamma}{X}$ is sufficiently close to $\alpha$ then every $\beta$-orbit is an $(S,\delta)$ pseudo-orbit for $\alpha$. By p.o.t this pseudo-orbit is  $\epsilon$-traced by the $\alpha$-orbit of some point $y$. If $\epsilon$ is sufficiently small, expansiveness of $\alpha$ implies that $y$ as above is unique.  Furthermore, expansiveness implies that the function $f:X \to X$ sending a point $x \in X$ to the unique point $y$ whose $\alpha$-orbit traces the $\beta$-orbit of $x$ is continuous.
Uniqueness of the map $f$  implies it is $\Gamma$-equivariant.
If $2\epsilon$ is an expansive constant for $\beta$, then it is impossible for a single $y$ to $\epsilon$-trace  the  $\beta$-orbits of two distinct  points, so $f$ is injective.

\begin{remark}
Strictly speaking,
 Chung and Lee restricted attention to finitely generated groups in \cite{1611.08994}.
The extension to general countable groups does not require any new ideas.
Note that the following simplification  occurs in the finitely generated case: If $\Gamma$ is generated by a finite set $S$, $\alpha \in \Act{\Gamma}{X}$ satisfies p.o.t if  for every $\epsilon >0$ there exists $\delta >0$   such that every $(S,\delta)$ pseudo-orbit is $\epsilon$-traced by some point $x \in X$.
\end{remark}

\begin{defn}
Given $\alpha \in \Act{\Gamma}{X}$,
 $(x,y) \in X \times X$ is called a \emph{$\alpha$-asymptotic pair} if $\lim_{\Gamma \ni g \to \infty}d(\alpha_g(x),\alpha_g(y))=0$. In other words, if for every $\epsilon>0$ there are at most finitely many  $g \in \Gamma$ so that $d(\alpha_g(x),\alpha_g(y)) >  \epsilon$.
\end{defn}

\begin{proof}[Proof of Theorem \ref{prop:finite_type_entropy_off_diagonal}]
Suppose  $\alpha \in \Act{\Gamma}{X}$  is expansive, satisfies p.o.t and has positive topological entropy.

%Let $\mathcal{U}$ be a cover of finite type for $\act{\Gamma}{X}$ with window $W \subset \Gamma$ as in Definition \ref{def:gen_finite_type}.
Fix  $\epsilon >0$ so that $2\epsilon$ is an expansive constant for $\alpha$. Because $\alpha$ satisfies p.o.t there exists $\delta >0$ and a finite set $S \subset \Gamma$ so that every $(\delta,S)$ pseudo-orbit is $\epsilon/2$-traced by some $x \in X$. By further increasing $S$ we can safely assume that $S=S^{-1}$ and that $S$ contains the identity.

Let
$(F_n)_{n=1}^\infty$ be a left-F{\o}lner sequence in $\Gamma$. Denote
$$\partial_S F_n:= \left\{ g \in \Gamma~:~ Sg \cap F_n \ne \emptyset \mbox{ and } Sg \cap F_n^c \ne \emptyset\right\}.$$
Because $(F_n)_{n=1}^\infty$ is a left-F{\o}lner sequence in $\Gamma$, it follows that
\begin{equation}\label{eq:partial_F_n}
\lim_{n \to \infty}\frac{|\partial_S F_n|}{|F_n|} = 0.
\end{equation}

Given a finite $F \subset \Gamma$ and $\delta >0$, a set $Y \subset X$ is  \emph{$(F,\delta)$-separated} if
$$\max_{g\in F}d(\alpha_g(x),\alpha_g(y)) \ge \delta \mbox{ for every distinct } x,y \in Y.$$
Let $\sep_{\delta,F}(X,d)$ denote the maximal cardinality of an $(F,\delta)$-separated set in $X$.

Standard argument give that for every finite $F \subset \Gamma$ and every $\delta>0$ the following holds:
$$ \log \sep_{\delta,F}(X,d) \le |F| \log \sep_{\delta/2,\{1\}}(X,d).$$

Thus by \eqref{eq:partial_F_n}:
\begin{equation}\label{eq:sep_bd_small_o}
\log \sep_{\delta,\partial_S F_n}(X,d) = o(|F_n|),
\end{equation}

For every $n >0$, let $X_n \subset X$ be an $(2\epsilon,F_n)$-separated set of  maximal cardinality.
Because $2\epsilon$ is an expansive constant for $\alpha$, the topological entropy of $\alpha$ is equal to
\begin{equation}\label{eq:h_sep_pos}
h(\alpha) =\lim_{n \to \infty}\frac{1}{|F_n|}\log|X_n| >0.
\end{equation}

By \eqref{eq:sep_bd_small_o} and \eqref{eq:h_sep_pos},
$$  \sep_{\delta,\partial_S F_n}(X,d) = o(|X_n|).$$
In particular, for large enough $n$ there exists distinct $x,x' \in X_n$  so that
\begin{equation}
\max_{g \in \partial_S F_n}d(\alpha_g(x),\alpha_g(x')) < \delta.
\end{equation}

Define $(y_g)_{g \in \Gamma} \in X^\Gamma$ as follows:

\begin{equation}
y_g = \begin{cases}
\alpha_g(x') & g \in F_n\\
\alpha_g(x) &  g \in \Gamma \setminus F_n
\end{cases}
\end{equation}

Then $(y_g)_{g \in \Gamma}$ is a $(S,\delta)$ pseudo-orbit for $\alpha$, and by p.o.t it is $\epsilon$-traced by some $y \in X$.
Thus
$d(\alpha_g(x),\alpha_g(y)) < \epsilon$ for every $g \in \Gamma \setminus F_n$.
This implies that $(x,y)$ is an $\alpha$-asymptotic pair.
Also
$$\max_{F \in F_n}d(\alpha_g(x'),\alpha_g(y)) < \epsilon,$$
Because $\{x,x'\}$ are $(F_n,2\epsilon)$-separated, it follows that
there exists $g \in F_n$ so that $d(\alpha_g(x'),\alpha_g(x)) > 2\epsilon$.
By the triangle inequality, there exists $g \in \Gamma$ so that $d(\alpha_g(x),\alpha_g(y))>\epsilon$, and in particular $x \ne y$.
\end{proof}

%The standard definition of subshifts of finite type uses a finite list of forbidden patterns. It is obvious from this definition that there are at most countably many subshift of finite type for %each countable group $\Gamma$. Again, the same result holds outside the symbolic setting:
%\begin{prop}\label{prop:POT_countable}
%Let $X$ be a compact metrizable space then the set of expansive $\alpha \in \Act{\Gamma}{X}$ that satisfy p.o.t  is at most countable.
%\end{prop}

%Our main result concerns principal algebraic actions:
%\begin{thm}\label{thm:priniciple_alg_pot}
%Let $\Gamma$ be a countable group. Every expansive principal algebraic $\Gamma$-action satisfies p.o.t.
%\end{thm}

%\section{Symbolic dynamical systems, shifts of finite type and locally maximal %subsystems}
We now recall a class of $\Gamma$-actions called  \emph{$\Gamma$-subshifts}. These are also referred to as \emph{symbolic dynamical systems}.% \cite{}.

\begin{defn}
Let $\AAA$ be a discrete finite set. Consider $\AAA^\Gamma$ as a (metrizable) topological space with the product topology.
The (left) shift action $\sigma \in \Act{\Gamma}{\AAA^\Gamma}$ is given by: %
%For ease of notation, we will consider the following $G$ action  $\act{\Gamma}{\AAA^\Gamma}$ from the right:
\begin{equation}
\sigma_g \cdot (x)_h = x_{g^{-1}h}.
\end{equation}

%, either from the right or from the left (these actions are isomorphic to each other).
%The corresponding left-action is requires taking inverses, and the two are clearly isomorphic.

The pair $(\AAA^\Gamma,\sigma)$ is called  the \emph{full shift} with alphabet $\AAA$ over the group $\Gamma$.
A \emph{$\Gamma$-subshift} is a subsystem of a full shift. In other words, the dynamical systems $(X,\sigma_X)$ where action $\sigma_X := \sigma \mid_X \in \Act{\Gamma}{X}$,
 where $X \subset \AAA^\Gamma$ is closed $\sigma$-invariant subset of $\AAA^\Gamma$.
\end{defn}

Evidently, every $\Gamma$-subshift is expansive. It is also well known that  every expansive action $\act{\Gamma}{X}$ on a totally disconnected compact metrizable space $X$ is isomorphic to a $\Gamma$-subshift. An action  $\act{\Gamma}{X}$ that is isomorphic to a $\Gamma$-subshift is sometimes called a \emph{symbolic dynamical system}. From this abstract point of view, symbolic dynamics is the study of expansive actions on a totally disconnected compact metrizable space.

 Let us recall a class of systems called \emph{subshifts of finite type},
%Subshifts of finite type are
arguably the most important class of systems in symbolic dynamics.  %We recall the definition:
\begin{defn}\label{def:SFT}
 A \emph{$\Gamma$-subshift of finite type} (\emph{$\Gamma$-SFT}) is a subshift $(X,\sigma_x)$ of the  form: %with the following property:
\begin{equation}\label{eq:SFT}
X = \left\{x \in \AAA^\Gamma~:~ (g \cdot x)\mid_F \in L \mbox{ for every } g \in \Gamma \right\},
\end{equation}
where  $F \subset \Gamma$ is a finite set and $L \subset \AAA^F$.
\end{defn}

Subshifts of finite type over a countable group $\Gamma$ can be characterized as the set of  expansive $\alpha \in \Act{\Gamma}{X}$   that satisfy p.o.t, where $X$ is a  totally disconnected compact metrizable space \cite{1611.08994,MR2353915,MR518563}.
Another characterization of  subshifts of finite type is given  in terms  of a certain descending chain condition that is reminiscent of the definition of Noetherian rings \cite{MR3493309}.
A variant of this descending chain condition appeared back in the early work of Kitchens and Schmidt on automorphisms of compact groups \cite{MR1036904}.
\begin{remark}
Theorem \ref{prop:finite_type_entropy_off_diagonal} is a direct extension of the classical observation that every subshift of finite type with positive entropy has an off-diagonal  asymptotic pair \cite[Proposition $2.1$]{MR1359979}.
\end{remark}
As we will now see,
in many respects, expansive actions on a compact metrizable space $X$ that satisfy p.o.t can be thought of as ``systems of finite type'', even when $X$ is not totally disconnected.

\begin{defn}
Suppose $X$ is a compact metric space and  $\alpha \in \Act{\Gamma}{X}$. Let
\begin{equation}\label{eq:subG}
\subG{\alpha}{X} := \left\{ Y \subset X ~:~ Y \mbox{ is closed and } \alpha_g(Y) =  Y \mbox{ for every } g \in \Gamma \right\}.
\end{equation}
The Hausdorff metric on the closed subsets of $X$ induces a topology on  $\subG{\alpha}{X}$ that makes it a compact metric space.
We say that $Y \in \subG{\alpha}{X}$ is a \emph{local maximum} if there exists an open neighborhood $\mathcal{U} \subset \subG{\alpha}{X}$  of $Y$ such that $Z \subseteq Y$ for every $Z \in \mathcal{U}$.
\end{defn}

\begin{prop}\label{prop:finite_type}
Let $\alpha \in \Act{\Gamma}{X}$ be expansive and satisfy p.o.t.
If $\beta \in \Act{\Gamma}{Y}$ is expansive and   $\Phi:X \to Y$ is continuous, $\Gamma$-equivariant and  injective then $\Phi(X)\in \subG{\beta}{Y}$ is a local maximum in $\subG{\beta}{Y}$.
%Then the following equivalent properties hold:
%\begin{enumerate}
%\item \label{en:SFT_1} The action $T$ satisfies the stable intersection property.
%  superis of finite type in the sense of Definition \ref{def:exp_SFT}.*** Maybe this and $(4)$ are weaker than $(2)$ and $(3)$? ***
%\item  For every compact metric space $Y$ and every expansive  $S \in \Act{\Gamma}{Y}$, if  $\Phi:X \to Y$ is $\Gamma$-equivariant and  injective, $\Phi(X)$ is a local maximum in %$\subG{Y}$.
%\item \label{en:SFT_3} There exists a  finite type topological generator $\mathcal{U}$ for $X %\curvearrowleft G$, as in Definition \ref{def:gen_finite_type}.
%\end{enumerate}
\end{prop}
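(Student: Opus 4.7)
The plan is to use the p.o.t property of $\alpha$, transferred to $\Phi(X)$ via the topological conjugacy induced by $\Phi$, to show that any closed $\beta$-invariant subset of $Y$ sufficiently Hausdorff-close to $\Phi(X)$ must actually lie inside $\Phi(X)$. First, since $\Phi$ is a continuous injection of the compact space $X$ into the Hausdorff space $Y$ and is $\Gamma$-equivariant, it is a homeomorphism onto $\Phi(X)$, and the restricted action $\beta|_{\Phi(X)}$ is topologically conjugate to $\alpha$. In particular $\beta|_{\Phi(X)}$ is itself expansive and satisfies p.o.t.

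Next I would fix an expansive constant $\eta > 0$ for $\beta$ on $Y$ and choose $\epsilon > 0$ with $2\epsilon < \eta$. Applying p.o.t for $\beta|_{\Phi(X)}$ with tolerance $\epsilon$, I obtain a finite $S \subset \Gamma$ and $\delta > 0$ so that every $(S,\delta)$ pseudo-orbit in $\Phi(X)$ is $\epsilon$-traced by some point of $\Phi(X)$. Using uniform continuity on $Y$ of the finitely many homeomorphisms $\{\beta_s\}_{s \in S}$, I pick $r > 0$ small enough that $r < \delta/2$, $r < \eta - \epsilon$, and $d(u,v) < r$ implies $d(\beta_s(u),\beta_s(v)) < \delta/2$ for every $s \in S$.

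The key step is to verify that $\mathcal{U} := \{Z \in \subG{\beta}{Y} : d_H(Z,\Phi(X)) < r\}$ is the required open neighborhood of $\Phi(X)$. For $Z \in \mathcal{U}$ and any $z \in Z$, the point $\beta_g(z)$ lies in $Z$ for every $g \in \Gamma$ by $\beta$-invariance, and hence within $r$ of some $y_g \in \Phi(X)$. A triangle-inequality argument, using $\beta_{sg}(z) = \beta_s(\beta_g(z))$ together with the uniform-continuity estimate, shows that $(y_g)_{g \in \Gamma}$ is an $(S,\delta)$ pseudo-orbit for $\beta|_{\Phi(X)}$. Hence p.o.t yields $y^\ast \in \Phi(X)$ with $d(\beta_g(y^\ast),y_g) < \epsilon$ for all $g$, and a final triangle inequality gives $d(\beta_g(y^\ast),\beta_g(z)) < \epsilon + r < \eta$ for every $g \in \Gamma$. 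Expansiveness of $\beta$ then forces $y^\ast = z$, placing $z$ in $\Phi(X)$ and giving $Z \subseteq \Phi(X)$.

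The main obstacle is conceptual rather than computational: one must notice that p.o.t is strong enough to upgrade a Hausdorff-closeness hypothesis into the statement that an entire $\beta$-orbit which merely stays near $\Phi(X)$ is in fact shadowed by a genuine orbit in $\Phi(X)$, which then coincides with the original orbit by expansiveness. Once this is in hand, only a routine balancing of the constants $\eta, \epsilon, \delta, r$ remains.
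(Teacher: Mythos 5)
Your proof is correct and takes essentially the same approach as the paper: transfer p.o.t.\ to $\Phi(X)$, take a sufficiently small neighborhood of $\Phi(X)$ in $\subG{\beta}{Y}$, observe that the orbit of any point $z$ in a nearby $Z$ produces an $(S,\delta)$ pseudo-orbit inside $\Phi(X)$, shadow it, and invoke expansiveness to force $z \in \Phi(X)$. The only cosmetic difference is that the paper defines $\mathcal{U}$ directly via a Bowen-type condition $\sup_{z\in Z}\inf_{x\in X}\max_{g\in S}d(\beta_g(z),\beta_g(\Phi(x)))<\delta/2$, whereas you use the Hausdorff distance and then recover the same estimate by uniform continuity of the finitely many maps $\beta_s$, $s\in S$; the two neighborhoods serve the same purpose and the constant-balancing is equivalent.
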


\begin{proof}
Let $\alpha \in \Act{\Gamma}{X}$, $\beta \in \Act{\Gamma}{Y}$ and $\Phi:X \to Y$ be as above.
Then $\beta\mid_{\Phi(X)} \in \Act{\Gamma}{\Phi(X)}$ is isomorphic to $\alpha$ and in particular satisfies p.o.t.
Let $\epsilon$ be an expansive constant for $\beta$. Choose  a finite $S \subset \Gamma$ and $\delta  \in (0,\epsilon)$ so that every $(S,\delta)$ pseudo-orbit for $\beta\mid_{\Phi(X)}$ is $\epsilon/2$-traced by some $y \in \Phi(X)$.
Assume with out loss of generality that $ 1 \in S$.
Now let
\begin{equation}
\mathcal{U}:= \left\{Z \in \subG{\beta}{Y}:~ \sup_{z \in Z}\inf_{ x \in X}\max_{g \in S}d(\beta_{g}(z),\beta_g(\Phi(x))) < \delta/2 \right\}.
\end{equation}

Then $\mathcal{U}$ is an open neighborhood of $\Phi(X)$ in $\subG{\beta}{Y}$.
Now if $Z \in \mathcal{U}$, then by definition of $\mathcal{U}$,
for every $z \in Z$ there is $(y_g)_{g \in \Gamma} \in \Phi(X)^\Gamma$  so that
$$d(\beta_{hg}(z),\beta_{h}(y_g)) < \delta/2 \mbox{ for every } g \in \Gamma,~ h \in S.$$
It follows that  $d(\beta_s(y_{g}),y_{sg})< \delta$ for all $s \in S$ and $g \in \Gamma$, so $(y_g)_{g \in \Gamma}$ is an $(S,\delta)$ pseudo-orbit for $\beta\mid_{\Phi(X)}$.
Thus it is $\epsilon/2$ traced by some  $y \in \Phi(X)$. But for every $g \in \Gamma$ we have
$$d(\beta_g(y),\beta_g(z))< d(\beta_g(y),y_g)+ d(y_g,\beta_g(z)) < \epsilon.$$
It follows that $z = y$, so $z \in \Phi(X)$. It follows that $Z \subseteq \Phi(X)$.

\end{proof}

\begin{remark}\label{rem:stable_intersection}
A subsystem $Z \in \subG{\alpha}{Y}$ is a local maximum if and only if  it satisfies the following \emph{stable intersection property
}: For every decreasing sequence $(Y_n)_{n=1}^\infty \in \subG{\alpha}{Y}^{\mathbb{N}}$
\begin{equation}\label{eq:dec_chain}
Y \supseteq Y_1 \ldots \supseteq Y_n \supseteq Y_{n+1} \supseteq \ldots
% \ldots \subseteq Y_{n+1} \subseteq Y_n \subseteq \ldots \subset Y_1 \subseteq Y
\end{equation}
such that $Z = \bigcap_{n=1}^\infty Y_n$,
 there exists $N \in\mathbb{N}$ so that $Y_{n}=Y_N$ for all $n \ge N$.
The equivalence of these two conditions follows because
the relation $\subseteq$ is closed in $\subG{\alpha}{Y} \times \subG{\alpha}{Y}$.
The observation that the stable intersection property characterizes subshifts of finite type is due to K. Schmidt \cite{MR3493309}.
\end{remark}

\section{Pseudo-orbit tracing  for  algebraic actions}\label{sec:pot_alg}
In this section we discuss the pseudo-orbit tracing for algebraic actions and some consequences.
We derive Theorem \ref{thm:priniciple_alg_pot}  as a particular case of  Theorem \ref{thm:X_A_SFT} below, thus establishing \POT~ for a class of algebraic actions.

Let $X$ be a  compact metrizable  \emph{abelian} group.
We  denote by $\Alg{\Gamma}{X} \subset \Act{\Gamma}{X}$  the collection of  $\Gamma$-actions on $X$  by continuous group automorphisms.
%The elements of
Every  $\alpha \in \Alg{\Gamma}{X}$ is  called an \emph{algebraic action}.
%We use the notation and conventions
We recall some notation, essentially following \cite{MR3314515}:

Denote by $\mathbb{Z}\Gamma$  the group ring of $\Gamma$. Denote by $\ell^\infty(\Gamma)$ the Banach space of all bounded $\mathbb{R}$-valued functions
on $\Gamma$, equipped with the $\|\cdot\|_\infty$-norm. Also,  denote  by $\ell^1(\Gamma)$  the Banach
algebra of all absolutely summable $\mathbb{R}$-valued functions on $\Gamma$, equipped with the
$\ell^1$-norm $\|\cdot\|_1$ and the involution $f \mapsto f^*$ defined by $\left(\sum_{s \in \Gamma}f_ss \right)^* := \sum_{s \in \Gamma}f_s s^{-1}$.

For $k \in \mathbb{N}$ and $p \in[1,\infty]$, we write $\ell^p(\Gamma,\mathbb{R}^k):= (\ell^p(\Gamma))^k$, equipped with the suitable $\|\cdot\|_p$-norm.
We denote by $\ell^p(\Gamma,\ZZ^k)$ the integer valued elements of $\ell^p(\Gamma,\mathbb{R}^k)$.
For $k \in \mathbb{N}$, let $M_k(\ell^1(\Gamma))$ denote the Banach algebra of $k \times k$ matrices with $\ell^1(\Gamma)$-entries, with the norm
$$ \| (f_{i,j})_{ 1\le i,j \le n}\|_1 := \sum_{1\le i,j \le n} \| f_{i,j}\|_1.$$
%Each $f \in M_k(\ell^1(\Gamma))$ is naturally identified with an $M_k(\mathbb{R})$-valued function on $\Gamma$ $g \mapsto f_g$ so that  the projection onto every coordinate is in %$\ell^1(\Gamma)$.
%, equipped with the natural $\ell^1$-norm that we also denote by $\|\cdot \|_1$ and
The involution on $\ell^1(\Gamma)$ also extends naturally to an isometric linear involution on $M_k(\ell^1(\Gamma))$ given by
$$ (f_{i,j})^*_{1 \le i \le k,\; 1 \le i \le k} :=  (f_{j,i}^*)_{1 \le i \le k,\; 1 \le i \le k}.$$

The following is a classical and crucial fact in the theory of algebraic actions:
Pontryagin
duality yields a natural one-to-one correspondence between algebraic actions of $\Gamma$ and (discrete, countable) $\mathbb{Z}\Gamma$-modules.
Thus, to each  $\mathbb{Z}\Gamma$-module $\mathcal{M}$  corresponds  an algebraic action  $\alpha^{(\mathcal{M})}\in \Alg{\Gamma}{\widehat{\mathcal{M}}}$. The dynamics of an algebraic action $\alpha^{(\mathcal{M})}$ are completely determined  by the algebraic properties of the dual as a $\mathbb{Z}\Gamma$-module.  Over the years, a significant number of important dynamical properties have found beautiful
algebraic interpretations in terms of the dual module. For $f \in \mathbb{Z}\Gamma$, we let $X_f$ denote the dual group of the $\mathbb{Z}\Gamma$-module $\mathbb{Z}\Gamma / \mathbb{Z}\Gamma f$. The corresponding algebraic action $\alpha^{(f)} \in \Alg{\Gamma}{X_f}$ is called the \emph{principal algebraic action } associated with $f$.

\begin{defn}
If $X$ is a compact group with identity element $1 \in X$, and  $\alpha \in \Alg{\Gamma}{X}$, a point $x \in X$ is called \emph{homoclinic} with respect to the action $\alpha$ if $(x,1)$ is an $\alpha$-asymptotic pair. In this case the \emph{homoclinic group}, denoted by $\Delta(X)$, is the set of all homoclinic points in $X$.
\end{defn}
It is straightforward to check that  $\Delta(X)$ is a $\Gamma$-invariant subgroup and $(x,y)$ is a  an $\alpha$-asymptotic pair if and only if $xy^{-1} \in \Delta(X)$.

We set up some more notation:

Let
\begin{equation}
P:(\ell^\infty(\Gamma))^k \to ((\mathbb{R}/\mathbb{Z})^k)^\Gamma
\end{equation}
denote the canonical projection map,
and  let $\dT$ be the metric on $(\mathbb{R} / \mathbb{Z})^k$ given by
\begin{equation}\label{eq_dT}
\dT \left(v+\mathbb{Z}^k ,w+\mathbb{Z}^k\right):=
\min_{m \in \mathbb{Z}^k}\| v-w -m\|_\infty,~  v,w \in \mathbb{R}^k
\end{equation}

%The following two lemmas % about lifting from $(\RR/\ZZ)^k$ to $\RR^k$
%will be used in  the proof of Proposition \ref{prop:X_A_SFT} below:
\begin{lem}\label{lem:lift_approx}
For  any $\delta < 1/2$ and $\tilde a,\tilde b \in (\RR/\ZZ)^k$ satisfying $\rho_\infty(\tilde a,\tilde b) < \delta$, the following holds: Let   $a \in \RR^k$ be the unique
element of $[-\frac{1}{2},\frac{1}{2})^k$ such that $\tilde a = a + \ZZ^k$. Then there exists a unique $b \in [-1,1]^k$ so that $\tilde b = b + \ZZ^k$ and
$\|a - b\|_{\infty} < \delta$.
\end{lem}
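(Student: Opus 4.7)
The plan is to prove existence and uniqueness separately, leveraging the definition of $\dT$ in \eqref{eq_dT} together with the constraint $\delta < 1/2$.

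For existence, I would start from any lift $b' \in \RR^k$ of $\tilde b$. By the definition of $\dT$, there exists $m \in \ZZ^k$ with $\| a - b' - m\|_\infty < \delta$. Setting $b := b' + m$ yields a lift of $\tilde b$ satisfying $\|a - b\|_\infty < \delta$. Because $a \in [-\frac12,\frac12)^k$ and $\delta < \frac12$, each coordinate $b_i$ lies in $(a_i - \delta, a_i + \delta) \subset (-1,1)$, so $b \in [-1,1]^k$ as required.

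For uniqueness, suppose $b_1, b_2 \in [-1,1]^k$ both lift $\tilde b$ and satisfy $\|a - b_i\|_\infty < \delta$. Then $b_1 - b_2 \in \ZZ^k$ and, by the triangle inequality,
\[
\|b_1 - b_2\|_\infty \le \|b_1 - a\|_\infty + \|a - b_2\|_\infty < 2\delta < 1,
\]
which forces $b_1 = b_2$.

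There is no real obstacle; the argument is entirely elementary. The only subtlety is that the hypothesis $\delta < 1/2$ is used twice: once to ensure the chosen integer translate of $b'$ lands in $[-1,1]^k$, and once to rule out two distinct integer-translated lifts both being within $\delta$ of $a$.
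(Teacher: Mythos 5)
Your proof is correct. It takes a genuinely different (and arguably more direct) route than the paper's. The paper first fixes $b$ as the unique representative of $\tilde b$ in the fundamental domain $[-\frac12,\frac12)^k + a$ (so $b \in [-1,1]^k$ is automatic), and only then invokes $\dT(\tilde a,\tilde b)<\delta$ to extract an integer $n$ with $\|a-(b+n)\|_\infty<\delta$; the key step is the triangle-inequality estimate $\|n\|_\infty \le \|b-a\|_\infty + \|a-(b+n)\|_\infty \le \frac12 + \delta < 1$, forcing $n=0$ and hence $\|a-b\|_\infty<\delta$. You reverse the order: you use the definition of $\dT$ directly to pick the integer translate $b = b' + m$ of an arbitrary lift $b'$ so that $\|a-b\|_\infty<\delta$ is immediate, and then observe that since $a \in [-\frac12,\frac12)^k$ and $\delta<\frac12$ this forces $b \in (-1,1)^k \subset [-1,1]^k$. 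What your version buys is that the $\delta<\frac12$ hypothesis is used transparently in two separate places (containment and uniqueness), rather than being folded into the ``$n=0$'' step, and you make the uniqueness clause explicit with the $2\delta<1$ argument, which the paper leaves implicit. Both are elementary, single-step arguments of comparable length; there is no substantive gap in either.
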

\begin{proof}
Existence and uniqueness of $a$ as above follows from the fact that
$\RR^k= \biguplus_{n \in \ZZ^k}\left([-\frac{1}{2},\frac{1}{2})^k+n\right)$.
Similarly, there exists a unique $b \in [-\frac{1}{2},\frac{1}{2})^k + a$ so that
$b = \tilde b + \ZZ^k$.
Note that $[-\frac{1}{2},\frac{1}{2})^k +  a \subseteq [-1,1]^k$ because $ a \in [-\frac{1}{2},\frac{1}{2})^k$, so $b \in [-1,1]^k$.
From the definition of $\rho_\infty$ in \eqref{eq_dT},
there exists $n \in \ZZ^k$ so that $\|a - (b + n)\|_\infty < \delta$. %, so $( b + n) \in [-\delta,+\delta]^k + a$.
It follows that
$$\|n\|_\infty =\|b -(b - n)\|_\infty < \|b- a\|_\infty + \|a- (b+n)\|_\infty \le \frac{1}{2} + \delta < 1.$$
It follows that  $n=0$, so $\|b-a\|_\infty < \delta$.
\end{proof}

\begin{lem}\label{lem:lift_compt}
Suppose $\delta \in (0,1/2)$ and that $K,W \subset \Gamma$ are finite subsets that contain the identity element of $\Gamma$. %such that $1 \in K$.
%$\delta  < \frac{1}{4}\min\{ |K| \cdot |W|,1\}$,
Assume
$(x^{(g)})_{g \in \Gamma} \in  ((\RR/\ZZ)^k)^\Gamma$ satisfy  % \eqref{eq:finite_type_dist}.
%Then by the choice of $\epsilon$ we have
\begin{equation}\label{eq:dT}
\dT (x^{(g)}_{w^{-1}f},x^{(wg)}_f)< \delta \mbox{ for every } g \in \Gamma,~ f \in K  \mbox{ and }w \in K^{-1}W.
\end{equation}

%inherited from the norm $\|\cdot \|_\infty$ on  $\mathbb{R}^k$.
%\eqref{eq:finite_type_dist}.

Then there exists $y^{(g)} \in ([-1,1]^k)^\Gamma \subset (\ell^\infty(\Gamma))^k$ with $P(y^{(g)})=x^{(g)}$ for every $g \in \Gamma$ so that:
\begin{equation}\label{eq:ys_compat}
\|y^{(g)}_{h^{-1}f}- y^{(hg)}_f\|_\infty < 2\delta \mbox{ for every } g \in \Gamma,~ f \in K  \mbox{ and }h \in W.
\end{equation}
\end{lem}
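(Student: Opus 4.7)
The plan is to build each lift $y^{(g)}$ by anchoring it at the identity coordinate using the canonical representative in $[-\tfrac{1}{2},\tfrac{1}{2})^k$, and then defining the values at the remaining relevant coordinates via Lemma \ref{lem:lift_approx} so that they stay close to these anchors. The factor of $2$ in the bound $2\delta$ signals that the argument will compare two lifts through a common reference point via the triangle inequality.

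Concretely, for each $g \in \Gamma$ I would first set $y^{(g)}_e$ to be the unique representative of $x^{(g)}_e$ in $[-\tfrac{1}{2}, \tfrac{1}{2})^k$. Next, for each $g \in \Gamma$ and each $s \in W^{-1}K$, I observe that $s^{-1} \in K^{-1}W$, so taking $w = s^{-1}$ and $f = e$ in \eqref{eq:dT} yields $\dT(x^{(g)}_s, x^{(s^{-1}g)}_e) < \delta$. I then apply Lemma \ref{lem:lift_approx} with $\tilde a = x^{(s^{-1}g)}_e$, $\tilde b = x^{(g)}_s$, and $a = y^{(s^{-1}g)}_e$, to obtain a unique $b \in [-1,1]^k$ representing $x^{(g)}_s$ with $\|b - y^{(s^{-1}g)}_e\|_\infty < \delta$, and set $y^{(g)}_s := b$. (For $s = e$ this is consistent with the anchor by the uniqueness clause of Lemma \ref{lem:lift_approx}.) For the remaining $s \in \Gamma \setminus W^{-1}K$ I would just pick any lift in $[-1,1]^k$, e.g.\ the canonical one; these values play no role in the conclusion.

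To verify the bound I would fix $g \in \Gamma$, $h \in W$, and $f \in K$. Since $1 \in K$ and $1 \in W$, both $h^{-1}f$ and $f$ lie in $W^{-1}K$, so $y^{(g)}_{h^{-1}f}$ and $y^{(hg)}_f$ are both defined by the second step. Taking $s = h^{-1}f$ (so $s^{-1}g = f^{-1}hg$) gives $\|y^{(g)}_{h^{-1}f} - y^{(f^{-1}hg)}_e\|_\infty < \delta$, while taking $s = f$ with $g$ replaced by $hg$ gives $\|y^{(hg)}_f - y^{(f^{-1}hg)}_e\|_\infty < \delta$. A single triangle inequality through the common reference $y^{(f^{-1}hg)}_e$ then delivers $\|y^{(g)}_{h^{-1}f} - y^{(hg)}_f\|_\infty < 2\delta$.

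I do not anticipate a real obstacle here. The only subtlety is confirming that the $s = e$ case of the general construction agrees with the anchor chosen in the first step, and this is immediate from the uniqueness clause of Lemma \ref{lem:lift_approx}. The whole argument treats each $g$ independently, and the key estimate reduces to a two-term triangle inequality once the reference points are chosen carefully.
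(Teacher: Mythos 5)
Your proof is correct and follows essentially the same approach as the paper: you anchor each lift at the identity coordinate with the canonical $[-\tfrac{1}{2},\tfrac{1}{2})^k$ representative, propagate to the coordinates in $W^{-1}K$ via Lemma~\ref{lem:lift_approx} using \eqref{eq:dT}, and obtain the $2\delta$ bound by a triangle inequality through the common reference $y^{(f^{-1}hg)}_1$. The only cosmetic difference is that you index the second step directly by $s \in W^{-1}K$ rather than by pairs $(h,f) \in W \times (K\setminus\{h\})$, which makes the well-definedness of $y^{(g)}_s$ transparent rather than requiring the observation that distinct factorizations $s=h^{-1}f=h'^{-1}f'$ give the same anchor.
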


\begin{proof}
Let $(x^{(g)})_{g \in \Gamma} \in ( (\RR/\ZZ)^k)^\Gamma$  satisfy \eqref{eq:dT}.
Define $y^{(g)} \in ([-1,1]^k)^\Gamma \subset (\ell^\infty(\Gamma))^k$  as follows:

First, for every $g \in \Gamma$ let $y^{(g)}_1$ be the unique $a \in [-\frac{1}{2},\frac{1}{2})^k$ so that $P(a)= x^{(g)}_1$.
Next, for every $g \in \Gamma$, $h \in W$ and $f \in K \setminus \{h\}$, let $y^{(g)}_{h^{-1}f}$
be the unique element of $[-1,1]^k$ satisfying
$$\|y^{(g)}_{h^{-1}f}- y^{(f^{-1}hg)}_1\|_\infty < \delta.$$
Existence and uniqueness of such elements follow by applying  Lemma \ref{lem:lift_approx} above with $\tilde b =x^{(g)}_{h^{-1}f}$ and $\tilde a =x^{(f^{-1}hg)}_1$, noting that  $\dT(x^{(g)}_{h^{-1}f},x^{(f^{-1}hg)}_1)< \delta$ by \eqref{eq:dT} because $1 \in K$.
%Such $b \in [-1,1]^k$ exists and is uniquely defined by Lemma \ref{lem:lift_approx} above (observing that $.
% Note that this is well defined, in the sense that it only depends on the product %$h^{-1}f$.
Finally, for every $g \in \Gamma$, $h \in \Gamma \setminus W^{-1}K$, let
$y^{(g)}_h$ be the unique $a \in [-\frac{1}{2},\frac{1}{2})^k$ so that $P(a)= x^{(g)}_h$.
It now follows that for every $g \in \Gamma$, $f \in K$ and $h \in W$,
$$\|y^{(g)}_{h^{-1}f}- y^{(hg)}_f\| \le \|y^{(g)}_{h^{-1}f}- y^{(f^{-1}hg)}_1\| + \|y^{(f^{-1}hg)}_1 - y^{(hg)}_f\| < 2\delta.$$
\end{proof}
For $A \in M_k(\mathbb{Z}\Gamma)$, denote $\displaystyle X_A := \widehat{(\mathbb{Z}\Gamma)^k / ( \mathbb{Z}\Gamma)^kA}$.
Explicitly:
\begin{equation}\label{eq:def_X_A}
X_A := \left\{ x \in ((\mathbb{R}/\mathbb{Z})^k)^\Gamma~:~ (xA^*)_g = \mathbb{Z}^k \mbox{ for every } g \in \Gamma\right\}
\end{equation}
Let $\alpha^{(A)} \in \Alg{\Gamma}{X_A}$ denote the canonical  algebraic action on $X_A$.

Theorem \ref{thm:priniciple_alg_pot}  is a particular instance of the  following more general result, inspired by \cite{MR3314515}:
%, and the proof of Lemma $3.7$ of that paper in particular.
%provides a source of examples for expansive algebraic systems of finite type:
\begin{thm}\label{thm:X_A_SFT}
Let % $G$ be a countable group and $f \in \mathbb{Z}G$
$k \in \mathbb{N}$ and $A \in M_k(\mathbb{Z}\Gamma)$ be invertible in $M_k(\ell^1(\Gamma))$. Then the canonical action
$ \alpha_A \in \Alg{\Gamma}{X_A}$ is expansive and satisfies p.o.t.
%that is invertible in $\ell^1(G)$, then the canonical algebraic action of $G$ on the dual of $\widehat X = \mathbb{Z} G/ (\mathbb{Z} G f)$ is of finite type.
\end{thm}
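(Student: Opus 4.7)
The plan is to deduce both expansiveness and the p.o.t.\ property for $\alpha^{(A)}$ from the invertibility of $A$ in $M_k(\ell^1(\Gamma))$, letting $B := A^{-1}$. The central observation is that for $y \in (\ell^\infty(\Gamma))^k$ with $P(y) \in X_A$ (equivalently $yA^* \in \ell^\infty(\Gamma,\ZZ^k)$), one recovers $y = (yA^*)B^*$, so $y$ is controlled in $\ell^\infty$ by $yA^*$ with constant $\|B^*\|_1$. This ``lift, apply $A^*$, land in $\ZZ^k$, invert with $B^*$'' cycle drives the whole argument.

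For \emph{expansiveness}, I would show that any $x \in X_A$ sufficiently close to $0$ on a large enough finite window of $\Gamma$ admits a lift $y \in (\ell^\infty(\Gamma))^k$ with $\|y\|_\infty < 1/(2\|A^*\|_1)$ (via the principal lift into $[-\tfrac12,\tfrac12)^k$, cf.\ Lemma~\ref{lem:lift_approx}). Then $yA^* \in \ell^\infty(\Gamma,\ZZ^k)$ has $\ell^\infty$-norm strictly less than $1/2$, hence vanishes identically, and $y = (yA^*)B^* = 0$, giving $x = 0$. Unravelling the relationship between the metric on $X_A$ and coordinatewise $\dT$, this yields an expansive constant of order $1/\|A^*\|_1$.

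For \emph{p.o.t.}, given $\epsilon > 0$ I would fix the finite set $W := \supp(A^*) \cup \{1\}$ (finite because $A \in M_k(\ZZ\Gamma)$), a sufficiently large finite window $K \subseteq \Gamma$, and then $\delta > 0$ and a finite $S \subseteq \Gamma$ so that every $(S,\delta)$ pseudo-orbit $(x^{(g)})_{g \in \Gamma}$ in $X_A$ satisfies the coordinatewise hypothesis \eqref{eq:dT} of Lemma~\ref{lem:lift_compt}. Applying that lemma yields lifts $y^{(g)} \in ([-1,1]^k)^\Gamma$ with $\|y^{(g)}_{h^{-1}f} - y^{(hg)}_f\|_\infty < 2\delta$ for $h \in W$, $f \in K$. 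Define the candidate tracer $y \in (\ell^\infty(\Gamma))^k$ by $y_g := y^{(g^{-1})}_1$ and the integer-valued function $w \in \ell^\infty(\Gamma,\ZZ^k)$ by $w_g := (y^{(g^{-1})}A^*)_1$ (integer because $x^{(g^{-1})} \in X_A$). A direct calculation, which only involves indices $h \in W$ since $A^*$ is supported in $W$, gives $\|yA^* - w\|_\infty \le 2\delta\|A^*\|_1$. Setting $\tilde y := y - (yA^* - w)B^*$, one obtains $\tilde y A^* = w \in \ell^\infty(\Gamma,\ZZ^k)$, so $P(\tilde y) \in X_A$, and $\|\tilde y - y\|_\infty \le 2\delta\|A^*\|_1\|B^*\|_1$. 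By construction $P(y)$ already traces $(x^{(g)})_g$ coordinatewise on the window $K$, and the correction $\tilde y - y$ is uniformly small, so choosing $\delta$ small and $K$ large enough forces $P(\tilde y)$ to $\epsilon$-trace the pseudo-orbit in the metric on $X_A$.

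The main obstacle I anticipate is indexing bookkeeping: the left $\Gamma$-action interacts with the right multiplication by $A^*$ in \eqref{eq:def_X_A}, and one must carefully translate the pseudo-orbit condition $d(\alpha_s x^{(g)}, x^{(sg)}) < \delta$ into coordinatewise form, and the relations between $y^{(g)}_h$, $y_g$, and $(yA^*)_g$. A secondary subtlety is that the metric $d$ on $X_A$ is only locally coordinatewise, so $S$ and $K$ must be chosen large enough that $(S,\delta)$-pseudo-orbit control in $d$ yields coordinatewise control on the relevant window, and conversely that coordinatewise $\epsilon$-tracing on a large enough window upgrades to $\epsilon$-tracing in $d$.
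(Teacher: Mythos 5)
Your proposal is correct and follows essentially the same route as the paper: same key lemmas (the coordinatewise lifting Lemmas~\ref{lem:lift_approx} and~\ref{lem:lift_compt}), same use of $B = A^{-1}$ in $M_k(\ell^1(\Gamma))$, and the same expansiveness argument (which the paper delegates to \cite[Lemma 3.7]{MR3314515}). The only cosmetic difference is bookkeeping: you build a naive lift $y$ and correct it to $\tilde y := y - (yA^*-w)B^*$, while the paper directly sets its tracer to $zB$, but since $\tilde y = wB^* = zB$ these are literally the same point, so the two proofs coincide.
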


\begin{proof}
To avoid some subscripts, in this  proof we let
\begin{equation}
X:= X_A \mbox{ and }
\alpha:= \alpha_A \in \Alg{\Gamma}{X}.
\end{equation}
By \cite[Lemma $3.7$]{MR3314515}, $\alpha \in  \Alg{\Gamma}{X}$ is expansive.
Fix a metric  $d$ on $X$.

Let   $\epsilon >0$ be arbitrary.
 By definition of p.o.t,
we need to show that there exists  $\delta' >0$ and a finite set $W \subset \Gamma$ so that every $(W,\delta')$ pseudo-orbit is $\epsilon$-traced by some point $x \in X$.

Choose $\delta >0$ small enough so that
\begin{equation}\label{eq:epsilon_small_A}
 \delta \le \min\left\{\frac{1}{4}\|A\|_1^{-1},\frac{1}{4}, \epsilon\right\}.
\end{equation}

From the fact that $\alpha$ is expansive, it follows that by possibly making $\delta$ even smaller we have
\begin{equation}\label{eq:delta_exp}
 \sup_{ g \in \Gamma}\dT( x_{g},\tilde x_{g}) < 2\delta  \mbox{ implies } x=\tilde x.
\end{equation}
In fact, the proof of \cite[Lemma $3.7$]{MR3314515} shows that \eqref{eq:epsilon_small_A} already implies \eqref{eq:delta_exp}, but we will not need this.

Since A is invertible in $M_k(\ell^1(\Gamma))$ so is $A^*$. Let $B \in M_k(\ell^1(\Gamma)) \cong \ell^1(\Gamma, M_k)$ denote the inverse of $A^*$.
Using the natural  identification of $M_k(\ell^1(\Gamma))$ as a subset of the $M_k(\mathbb{R})$-valued functions on $\Gamma$, write $B= \sum_{g \in \Gamma} B_g$ with $B_g \in M_k(\mathbb{R})$. %, so $\sum_{g \in G} \|B_g\| < \infty$.
There exists a  finite symmetric set  $F\subset \Gamma$  containing the identity with the property that
\begin{equation}\label{eq:F_1_approx}
\sum_{g \in \Gamma \setminus F} \|B_g\| < \frac{\delta}{2}\|A^*\|^{-1}_1.
\end{equation}

% for every $y \in \ell^\infty(G,\mathbb{R}^k)$ with $\| y\|_\infty < \|A\|_1$ such that $y_g =0$ for every $g \in F_1$ we have $ (By)_1 \in [-\epsilon,\epsilon]^k$.
%Let $S \subset G$ denote the support of $A$, and

% $K$ and $\delta$  *** Check if it should be $F_2F_1$ or maybe $F_2^{-1}F_1$ or whatever ****.
Choose a finite symmetric set $S \subset \Gamma$  containing the identity that supports  $A^*$ in the sense that there exists $(A^*_s)_{s \in S} \in (M_k(\mathbb{Z}))^S$ so that
$$(yA^*)_g = \sum_{s \in S}y_{gs^{-1}}A^*_s \mbox{ for every } y \in (\ell^\infty(\Gamma))^k \mbox{ and } g \in  \Gamma,$$
and let
\begin{equation}
K := FS^{-1} \subset \Gamma.
\end{equation}

Choose $\delta'>0$ small enough so that
\begin{equation}\label{eq:delta_prime}
d(x,y) < \delta' \mbox{ implies } \max_{f \in K}\dT( x_f,y_f) < \delta.
\end{equation}

%****and
%let  $\epsilon >0$ be an expansive constant with respect to the metric $d$.
%Furthermore, choose $\epsilon >0$ small enough to have  the property that %a finite set $\tilde K \subset \Gamma$ with the property that for every $x, \tilde x \in X_A$
%$d( x,\tilde x) < \epsilon$ implies   $\dT(x_g,\tilde x_g)< \delta$ for every $g \in K$.
%As in the proof of \cite[Lemma $3.7$]{MR3314515},
%using the fact that $A$ is invertible in $M_k(\ell^1(\Gamma))$,
% it follows that for every $\delta>0$ satisfying \eqref{eq:epsilon_small_A}  and every $\epsilon >0$

By compactness of $X$ and \eqref{eq:delta_exp} it follows that
there exists a finite set $W \subset \Gamma$ with the property that:

\begin{equation}\label{eq:W_delta_prime}
\max_{ g \in W}\dT( x_{g^{-1}},\tilde x_{g^{-1}}) < 2\delta \mbox{ implies } d(x,\tilde x) < \min\{\delta',\epsilon\} \mbox{ for all } x,\tilde x \in X.
\end{equation}

%*** repeat the argument! ***
Let $W \subset \Gamma$ be such a finite set, so that in addition $F^{-1} \subset W$.
%We will  show that $\act{\Gamma}{X_A}$ has the $(\epsilon,W)$-tracing property for $\epsilon>0$ and $W \subset \Gamma$ as above. % for every set $W \subset \Gamma$ containing the identity.

%Let $K \subset \Gamma$ and $\delta >0$ as in  Lemma \ref{lem:mod1_cont}.
 Suppose $(x^{(g)})_{g \in \Gamma} \in X^\Gamma$ is a $(K^{-1}W,\delta')$ pseudo-orbit.
Then by  \eqref{eq:delta_prime}, it follows that  \eqref{eq:dT} holds.
%we have
%\begin{equation}\label{eq:dT}
%\dT (x^{(g)}_{h^{-1}f},x^{(hg)}_f)< \delta \mbox{ for every } g \in \Gamma,~ f \in K  \mbox{ and }h \in W,
%\end{equation}

%inherited from the norm $\|\cdot \|_\infty$ on  $\mathbb{R}^k$.
%\eqref{eq:finite_type_dist}.

Let   $y^{(g)} \in ([-1,1]^k)^\Gamma \subset (\ell^\infty(\Gamma))^k$ be as in the conclusion of  Lemma \ref{lem:lift_compt}.
%with $P(y^{(g)})=x^{(g)}$ for every $g \in \Gamma$ so that the following holds:
%\begin{equation}\label{eq:ys_compat}
%\|y^{(g)}_{h^{-1}f}- y^{(hg)}_f\| < 2\delta \mbox{ for every } g \in \Gamma,~ f \in K  \mbox{ and }h \in W.
%\end{equation}
Let
\begin{equation}
z^{(g)}:= y^{(g)}A^* \in \ell^\infty(\Gamma,\mathbb{Z}^k).
\end{equation}
Then \eqref{eq:ys_compat} together with \eqref{eq:epsilon_small_A} imply
\begin{equation}\label{eq:z_approx}
\|z^{(g)}_{h^{-1}f} - z^{(hg)}_f\|_\infty <1 \mbox{ for every } g \in \Gamma~,~  f \in K \mbox{ and } h \in W.
\end{equation}
But $z^{(g)}_{h^{-1}f} ,z^{(hg)}_f \in\ZZ^k$ so

\begin{equation}\label{eq:z_g_h_f}
z^{(g)}_{h^{-1}f} =  z^{(hg)}_f \mbox{ for every } g \in \Gamma~,~  f \in K \mbox{ and } h \in W.
\end{equation}
Also note that

\begin{equation}\label{eq:z_infty_norm}
\|z^{(g)}\|_\infty \le \|y^{(g)}\|_\infty \|A^*\|_1 \le \|A^*\|_1.
\end{equation}

%Because $(z^{(g)})_h$ and $(z^{(gh^{-1})})_{gh^{-1}}$ have integer coordinates, it follows that
%\begin{equation}\label{eq:z_exact}
%(z^{(gh^{-1})})_{fh}=(z^{(g)})_f \mbox{ for every } g \in G~,~  h \in F_2 \mbox{ and } f \in F.
%\end{equation}
Define $z \in  \ell^\infty(\Gamma,\mathbb{Z}^k)$ by
\begin{equation}\label{eq:z_def}
z_{g^{-1}} := z^{(g)}_1 \mbox{ for every } g\in \Gamma.
\end{equation}

Using \eqref{eq:z_g_h_f} and the fact that $F^{-1} \subset W$ it follows that
\begin{equation}\label{eq:g_z_F}
 \alpha_g(z)\mid_F = z^{(g)}\mid_F.
\end{equation}
By \eqref{eq:F_1_approx}, \eqref{eq:z_infty_norm} and \eqref{eq:g_z_F}:
%$$z_g = A^*(y)_g = (A^*B z)_g = \sum_{g_2 \in F_2}\sum_{g_1 \in G}
\begin{equation}\label{eq:z_B_trace}
 \| (\alpha_g (z) B)_1 - (z^{(g)} B)_1 \| < \delta \mbox{ for every } g \in \Gamma.
\end{equation}
Let
\begin{equation}
y:=  z B \in (\ell^\infty(\Gamma))^k \mbox{ and } x:= P(y).
\end{equation}
It follows that

$$y A^*= z\in  \ell^\infty(G,\mathbb{Z}^k).$$
Thus $x \in X_A$, recalling the definition of $X_A$ in \eqref{eq:def_X_A}.

It follows that from \eqref{eq:z_B_trace} and \eqref{eq:dT} that
$$\rho_{\infty}((\alpha_g(x))_{f^{-1}},x^{(g)}_{f^{-1}}) < 2\delta \mbox{ for every } g \in \Gamma \mbox{ and } f \in W.$$
By  \eqref{eq:W_delta_prime}  this implies
$$d(\alpha_g(x),x ^{(g)}) < \epsilon \mbox{ for every } g \in \Gamma.$$
We found $x \in X_A$  that  $\epsilon$-traces $(x^{(g)})_{g \in \Gamma}$, so the proof is complete.
%the distance between $x_g$ and $x^{(g)}_1$ is at most $\epsilon$ for every $g \in G$.
\end{proof}

Theorem \ref{thm:priniciple_alg_pot} follows from Theorem \ref{thm:X_A_SFT} above by letting $k=1$, so $A \in M_k(\mathbb{Z}\Gamma) \cong \mathbb{Z}\Gamma$.

Combining Theorem \ref{thm:priniciple_alg_pot} with Theorem \ref{prop:finite_type_entropy_off_diagonal} we recover the following result:
\begin{cor}\label{cor:princ_alg_homoc}
Let $\Gamma$ be a countable amenable group. Every expansive principal algebraic $\Gamma$-action with positive topological entropy admits a non-diagonal asymptotic pair.
Equivalently, it has a non-trivial homoclinic group.
\end{cor}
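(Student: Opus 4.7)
The plan is to deduce this corollary by directly combining the two main results already established in the paper. First, the hypothesis is that $\Gamma$ is countable amenable and $\alpha^{(f)} \in \Alg{\Gamma}{X_f}$ is an expansive principal algebraic action with positive entropy. Theorem \ref{thm:priniciple_alg_pot} (the $k=1$ specialization of Theorem \ref{thm:X_A_SFT}) guarantees that $\alpha^{(f)}$ satisfies p.o.t. Theorem \ref{prop:finite_type_entropy_off_diagonal}, applied to the amenable group $\Gamma$ acting expansively with p.o.t.~and positive entropy, then delivers an off-diagonal asymptotic pair $(x,y) \in X_f \times X_f$ with $x \neq y$. This is the first half of the statement; nothing further is required, since both ingredients are in place.

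For the ``equivalently'' clause, I would appeal to the observation recorded earlier in the paper, namely that for an algebraic action $\alpha \in \Alg{\Gamma}{X}$ on a compact abelian group $X$ a pair $(x,y)$ is $\alpha$-asymptotic if and only if $xy^{-1} \in \Delta(X)$. This follows from the fact that a compact metrizable group admits a translation-invariant metric $d$, so
\begin{equation*}
d(\alpha_g(x), \alpha_g(y)) = d(\alpha_g(xy^{-1}), 1_X),
\end{equation*}
the two quantities tend to $0$ along $\Gamma \to \infty$ simultaneously, and each $\alpha_g$ is a continuous group automorphism. Consequently, an off-diagonal asymptotic pair exists if and only if there is a non-identity $z = xy^{-1} \in \Delta(X_f)$, i.e.~$\Delta(X_f)$ is non-trivial. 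This settles the equivalence.

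There is no substantive obstacle: the whole content has already been absorbed into Theorems \ref{thm:priniciple_alg_pot} and \ref{prop:finite_type_entropy_off_diagonal}, and the translation between ``off-diagonal asymptotic pair'' and ``non-trivial homoclinic group'' is a one-line consequence of $X_f$ being a topological group on which $\Gamma$ acts by automorphisms. The only thing worth flagging is the (standard) use of a translation-invariant metric on $X_f$, so that the asymptotic condition for the pair $(x,y)$ reduces to the homoclinic condition for $xy^{-1}$.
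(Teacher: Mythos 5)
Your proof is correct and follows the paper's route exactly: the corollary is obtained by combining Theorem \ref{thm:priniciple_alg_pot} with Theorem \ref{prop:finite_type_entropy_off_diagonal}, and the equivalence between an off-diagonal asymptotic pair and a non-trivial homoclinic group is the observation (already recorded in the paper after the definition of $\Delta(X)$) that $(x,y)$ is asymptotic iff $xy^{-1}\in\Delta(X)$, which you justify in the standard way via a translation-invariant metric. Nothing to add.
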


\begin{remark}\label{rem:princ_alg} By \cite[Corollary $7.9$]{MR3314515} every non-trivial expansive principal algebraic action of an amenable group has positive entropy.
By  \cite[Lemma $5.4$]{MR3314515}  algebraic actions of the  form $\alpha_A \in \Act{\Gamma}{X_A}$ as in the statement of Theorem \ref{thm:X_A_SFT} have a dense set of homolicnic points. In particular, this proves Corollary \ref{cor:princ_alg_homoc} holds. I thank Nhan-Phu Chung for pointing out this out to me.
\end{remark}

Combining Theorem \ref{thm:priniciple_alg_pot} with Theorem \ref{thm:pot_stable} we get:
\begin{cor}\label{cor:princ_alg_stbl}
Every expansive principal algebraic $\Gamma$-action  is topologically stable.
\end{cor}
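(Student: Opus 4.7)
The plan is to derive this as an immediate consequence of combining two results already at our disposal: Theorem \ref{thm:priniciple_alg_pot} and Theorem \ref{thm:pot_stable}. Given an expansive principal algebraic $\Gamma$-action $\alpha^{(f)} \in \Alg{\Gamma}{X_f}$ associated with some $f \in \mathbb{Z}\Gamma$, Theorem \ref{thm:priniciple_alg_pot} (which is the $k=1$ specialization of Theorem \ref{thm:X_A_SFT}) guarantees that $\alpha^{(f)}$ satisfies the \POT~property. Since expansiveness is a standing assumption, both hypotheses of Theorem \ref{thm:pot_stable}, i.e. expansiveness together with p.o.t., are in place.

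Applying Theorem \ref{thm:pot_stable} to $\alpha^{(f)}$ then yields the topological stability conclusion directly: for every $\epsilon>0$ there is a neighborhood $\mathcal{U}$ of $\alpha^{(f)}$ in $\Act{\Gamma}{X_f}$ such that every $\beta \in \mathcal{U}$ admits a continuous $\Gamma$-equivariant map $f:X_f\to X_f$ with $\sup_{x \in X_f}d(f(x),x)\le \epsilon$. This is the definition of topological stability, so nothing further needs to be verified.

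There is essentially no obstacle in this argument, as the substance lies in the two cited theorems. The only point worth flagging is that one could equally well state and prove the sharper corollary obtained from Theorem \ref{thm:X_A_SFT} directly: every $\alpha_A \in \Alg{\Gamma}{X_A}$ with $A \in M_k(\mathbb{Z}\Gamma)$ invertible in $M_k(\ell^1(\Gamma))$ is topologically stable. Corollary \ref{cor:princ_alg_stbl} is then recovered by specializing to $k=1$.
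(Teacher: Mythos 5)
Your argument is correct and is exactly the paper's own reasoning: the corollary is stated immediately after the sentence ``Combining Theorem \ref{thm:priniciple_alg_pot} with Theorem \ref{thm:pot_stable} we get,'' so the proof is precisely the two-theorem combination you give. Your closing remark about the sharper version from Theorem \ref{thm:X_A_SFT} is a reasonable observation but adds nothing beyond what the paper already implicitly contains.
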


Using an algebraic characterizations of expansive algebraic actions due to Chung and Li we have:

\begin{cor}
Let $\Gamma$ be a countable group.
Suppose $\alpha \in \Alg{\Gamma}{X}$ is expansive. Then
$\alpha$ is algebraically  conjugate to a subsystem  of an algebraic action that is expansive and  satisfies p.o.t.
In other words,
there exists an expansive $\beta \in \Alg{\Gamma}{Y}$ that satisfies p.o.t and a $\Gamma$-equivariant  continuous monomorphism $\Phi:X \to Y$.
\end{cor}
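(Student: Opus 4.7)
The plan is to pass through Pontryagin duality and invoke Chung and Li's algebraic characterization of expansive algebraic actions, which reduces the statement to a direct application of Theorem \ref{thm:X_A_SFT}. Writing $\mathcal{M} := \widehat{X}$, so that $\alpha$ is Pontryagin-dual to the countable discrete $\mathbb{Z}\Gamma$-module $\mathcal{M}$, I would use the Chung-Li criterion (\cite[Theorem 3.1]{MR3314515}) to deduce from expansiveness of $\alpha$ that $\mathcal{M}$ is finitely generated over $\mathbb{Z}\Gamma$ and admits a presentation of the form $\mathcal{M} \cong (\mathbb{Z}\Gamma)^k / J$, where the submodule $J \subseteq (\mathbb{Z}\Gamma)^k$ contains a row submodule $(\mathbb{Z}\Gamma)^k A$ for some $A \in M_k(\mathbb{Z}\Gamma)$ invertible in $M_k(\ell^1(\Gamma))$. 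This yields a $\mathbb{Z}\Gamma$-linear surjection
$$ \bar\pi:(\mathbb{Z}\Gamma)^k/(\mathbb{Z}\Gamma)^k A \twoheadrightarrow \mathcal{M}. $$

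Next, I would dualize $\bar\pi$. Since Pontryagin duality converts surjections of countable discrete abelian groups into injective continuous homomorphisms of their compact duals, and does so $\Gamma$-equivariantly when the original map is $\mathbb{Z}\Gamma$-linear, I obtain a continuous, $\Gamma$-equivariant monomorphism
$$ \Phi := \widehat{\bar\pi}: X \cong \widehat{\mathcal{M}} \; \hookrightarrow \; \widehat{(\mathbb{Z}\Gamma)^k/(\mathbb{Z}\Gamma)^k A} = X_A. $$
Setting $Y := X_A$ and $\beta := \alpha_A \in \Alg{\Gamma}{Y}$, Theorem \ref{thm:X_A_SFT} applied to this particular $A$ delivers precisely what is needed: $\beta$ is expansive and satisfies the \POT~property.

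Essentially all of the nontrivial content is already encapsulated in the previously established Theorem \ref{thm:X_A_SFT} together with Chung and Li's algebraic expansiveness criterion, so the remaining work is merely bookkeeping. The only step one must genuinely verify is that the presentation supplied by the Chung-Li criterion actually produces, after dualization, a $\Gamma$-equivariant \emph{injective} continuous homomorphism into $X_A$; this is a standard consequence of Pontryagin duality for countable discrete abelian groups, and it is where the requirement that $(\mathbb{Z}\Gamma)^k A \subseteq J$ (rather than equality) gets used, since $X$ realizes as the annihilator of the image of $J/(\mathbb{Z}\Gamma)^k A$ inside $X_A$.
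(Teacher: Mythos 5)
Your proof is correct and follows the same route as the paper: apply Chung and Li's expansiveness characterization to present $\widehat{X} \cong (\mathbb{Z}\Gamma)^k/J$ with the rows of some $\ell^1$-invertible $A$ inside $J$, dualize the resulting surjection $(\mathbb{Z}\Gamma)^k/(\mathbb{Z}\Gamma)^kA \twoheadrightarrow \widehat{X}$ to a $\Gamma$-equivariant embedding $X \hookrightarrow X_A$, and then invoke Theorem~\ref{thm:X_A_SFT} (which is indeed the result one needs here, not Theorem~\ref{thm:priniciple_alg_pot} as the paper's text literally cites). Your extra spelling-out of the Pontryagin-duality step is just bookkeeping the paper leaves implicit.
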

\begin{proof}
Suppose $\alpha \in \Alg{\Gamma}{X}$ is expansive. As in \cite{MR3314515}, we can identify the left $\mathbb{Z}\Gamma$-module $\widehat{X}$ with $(\mathbb{Z}\Gamma)^k/J$ for some $k \in \mathbb{N}$ and some left  $\mathbb{Z}\Gamma$-submodule $J$.
By \cite[Theorem $3.1$]{MR3314515} there exists $A \in M_k(\mathbb{Z}\Gamma)$ invertible in $M_k(\ell^1(\Gamma))$ so that the rows of $A$ are contained in $J$.
Then $\alpha$ is algebraically  conjugate to a subsystem  of the natural algebraic $\Gamma$-action on the dual of  $(\mathbb{Z}\Gamma)^k/(\mathbb{Z}\Gamma)^kA$.
By Theorem \ref{thm:priniciple_alg_pot}  this $\Gamma$-action satisfies p.o.t.
\end{proof}

As we mentioned, for algebraic actions every dynamical property corresponds to some algebraic property of $\mathbb{Z}\Gamma$-modules.
Is there a natural algebraic interpretation for p.o.t in terms of the  Pontryagin dual?
Theorem \ref{thm:X_A_SFT} is a sufficient condition for algebraic actions to satisfy p.o.t. The following gives a natural necessary condition for expansive actions to satisfy p.o.t:

%\begin{prop}
%**
%\end{prop}

Given a ring $\mathcal{R}$, recall that a left $\mathcal{R}$-module $\mathcal{M}$ is \emph{finitely presented} if it is isomorphic to $\mathcal{R}^k/J$, where $J \subset \mathcal{R}^k$ is a finitely generated left ideal.
%** Recall that a ring $\mathcal{R}$ is  \emph{left-Noetherian} if every increasing chain of left-ideals  in $\mathcal{R}$ stabilizes.

\begin{prop}\label{thm:finitely_presented_finite_type}
Let $\Gamma$ be a countable group and $\act{\Gamma}{X}$ an algebraic action.
If $\alpha \in \Alg{\Gamma}{X}$  is expansive and satisfies p.o.t, then the Pontryagin dual $\widehat X$ of $X$ is a finitely presented left $\ZZ\Gamma$-module.

\end{prop}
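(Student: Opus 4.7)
The plan is to realize $X$ as the intersection of a decreasing chain of subsystems inside an ambient system of the type $X_A$ to which Theorem \ref{thm:X_A_SFT} and Proposition \ref{prop:finite_type} apply, and then use the stable intersection property to conclude that the chain must stabilize. The finite-presentation of $\widehat{X}$ is essentially a dual translation of this stabilization.

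First, since $\alpha$ is expansive, write $\widehat{X} \cong (\ZZ\Gamma)^k/J$ for some $k\in\NN$ and some left $\ZZ\Gamma$-submodule $J \subseteq (\ZZ\Gamma)^k$. By \cite[Theorem $3.1$]{MR3314515} (invoked in the paper's final corollary), there exists $A \in M_k(\ZZ\Gamma)$, invertible in $M_k(\ell^1(\Gamma))$, whose rows lie in $J$. The inclusion $(\ZZ\Gamma)^k A \subseteq J$ yields a surjection of $\ZZ\Gamma$-modules $(\ZZ\Gamma)^k/(\ZZ\Gamma)^k A \twoheadrightarrow (\ZZ\Gamma)^k/J$, which dualizes to a $\Gamma$-equivariant continuous monomorphism $\Phi:X \hookrightarrow X_A$. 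By Theorem \ref{thm:X_A_SFT} the ambient action $\alpha_A\in\Alg{\Gamma}{X_A}$ is expansive and satisfies p.o.t., and by Proposition \ref{prop:finite_type} applied to $\Phi$, the subsystem $\Phi(X) \in \subG{\alpha_A}{X_A}$ is a local maximum; equivalently, by Remark \ref{rem:stable_intersection}, it has the stable intersection property.

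Next, I would exhaust $J$ by finitely generated submodules. Enumerate a countable generating set of $J$ as $\{a_1,a_2,\dots\}$ and let
\begin{equation*}
J_n := (\ZZ\Gamma)^k A + \sum_{i=1}^n \ZZ\Gamma \cdot a_i.
\end{equation*}
Each $J_n$ is a finitely generated $\ZZ\Gamma$-submodule of $(\ZZ\Gamma)^k$ with $(\ZZ\Gamma)^k A \subseteq J_n \subseteq J_{n+1} \subseteq J$ and $\bigcup_n J_n = J$. Let
\begin{equation*}
Y_n := \left\{ x \in X_A : (x\,b^*)_1 = 0 \text{ for every } b \in J_n \right\},
\end{equation*}
the annihilator of $J_n$ inside $X_A$ under the canonical Pontryagin pairing between $(\ZZ\Gamma)^k/J_n$ and its dual. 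Each $Y_n$ is a closed $\Gamma$-invariant subgroup of $X_A$, hence $Y_n \in \subG{\alpha_A}{X_A}$; the inclusions $J_n \subseteq J_{n+1}$ give $Y_n \supseteq Y_{n+1}$; and the standard fact that annihilators of unions are intersections of annihilators gives $\bigcap_{n=1}^\infty Y_n = \Phi(X)$.

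Finally, since $\Phi(X)$ has the stable intersection property, the descending chain $Y_1 \supseteq Y_2 \supseteq \dots$ must stabilize: $Y_N = \Phi(X)$ for some $N$. Dualizing, this forces $J_N = J$, so $J$ is finitely generated and $\widehat{X} \cong (\ZZ\Gamma)^k/J$ is finitely presented. The step most in need of care is verifying the duality bookkeeping in the previous paragraph: that $Y_n$ is exactly the closed subsystem corresponding to the quotient $(\ZZ\Gamma)^k/J_n$, that passing from the union $\bigcup_n J_n$ to the intersection $\bigcap_n Y_n$ is valid in this Pontryagin setup, and that equality of annihilators really does recover equality of submodules — the last point being where we use that $J_n$, $J$ are submodules of the discrete module $(\ZZ\Gamma)^k$.
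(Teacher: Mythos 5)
Your proof is correct and follows essentially the same route as the paper: write $\widehat X\cong(\ZZ\Gamma)^k/J$, use \cite[Theorem 3.1]{MR3314515} to find $A$ with rows in $J$, embed $X$ into $X_A$, and invoke Proposition~\ref{prop:finite_type} together with Remark~\ref{rem:stable_intersection} to force stabilization of the dual chain $Y_n$. The paper phrases the last step as a contradiction (assume $J$ not finitely generated, build a strictly increasing chain of submodules) rather than your direct exhaustion by $J_n=(\ZZ\Gamma)^kA+\sum_{i\le n}\ZZ\Gamma\,a_i$, but the underlying mechanism and the duality bookkeeping you flag at the end are identical.
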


\begin{proof}
Let $\alpha \in \Alg{\Gamma}{X}$  be expansive. % and satisfy p.o.t.
In particular it  follows that the dual module $\widehat{X}$ is finitely generated as $\mathbb{Z}\Gamma$-module \cite{MR1036904}.
% \cite[Theorem $3.1$]{MR3314515} %$\widehat{X}$ is finitely generated as a %$\mathbb{Z}\Gamma$-module.
Furthermore, by a characterization of expansive algebraic actions given in \cite{MR3314515}, there exists $k \in \mathbb{N}$, a left $\mathbb{Z}\Gamma$-submodule $J$ of $(\mathbb{Z}\Gamma)^k$, and a $A \in M_k(\ZZ\Gamma)$  invertible in $M_k(\ell^1(\Gamma))$ such that the rows of $A$ are contained in $J$
so that $\widehat{X}$ is isomorphic as a $\ZZ\Gamma$-module to $(\mathbb{Z}\Gamma)^k / J$.
%If $J$ is a finitely generated $\ZZ\Gamma$-module, then $\widehat{X}$ is finitely presented and we are done.
Suppose $J$ is not a finitely generated $\ZZ\Gamma$-module. Let $J_1$ be the $\ZZ\Gamma$-module generated by the rows of $A$. Because $J$ is not finitely generated, there exists a strictly increasing sequence of left $\mathbb{Z}\Gamma$-modules
$$J_1 \subset J_2 \subset J_3 \subset \ldots,$$
so that the row of $A$ are contained in $J_1$ and $J = \bigcup_{n=1}^\infty J_n$. It follows that $X_A$ is expansive and that $X \in  \subG{\alpha}{X_A}$ is a strictly decreasing intersection of the systems $\widehat{(\mathbb{Z}\Gamma)^k)/J_n}$. By Proposition \ref{prop:finite_type} and Remark \ref{rem:stable_intersection}, it follows that $\alpha$ does not  satisfy p.o.t.
\end{proof}

In the case $X$ is a  disconnected compact metrizable abelian group then every expansive $\alpha \in \Alg{\Gamma}{X}$ is isomorphic to an abelian \emph{group shift}.
In this case it was shown in \cite{MR1345152} that if $\Gamma$ is polycyclic-by-finite then  $\alpha$ as above is topologically conjugate to a shift of finite type.
In fact, the arguments above and those of  \cite{MR1345152} easily imply that in the totally disconnected case an expansive algebraic action satisfies p.o.t if and only if the dual module is finitely presented.
%More generally, when $X$ is a  disconnected compact metrizable abelian group,  $\alpha \in \Alg{\Gamma}{X}$ satisfies p.o.t ??

%\begin{prop}
%Let $X$ be a totally disconnected compact metrizable abelian group and $\alpha \in \Alg{\Gamma}{X}$.
%Then $(X,\alpha)$ is topologically conjugate to a subshift of finite type if and only if
 %the Pontryagin dual $\widehat X$ of $X$ is a finitely presented left $\ZZ\Gamma$-module.
%\end{prop}
%\begin{proof}
%*************** Check if this is Schmidt... ****
%\end{proof}

In view of the above, one might try to guess that an expansive algebraic $\Gamma$-action satisfies p.o.t if and only if he Pontryagin  is a finitely presented left $\ZZ\Gamma$-module.
%This is indeed the case when $X$ is a  \emph{totally disconnected} compact metrizable abelian group. An expansive $\Gamma$-actions by automorphisms of a totally disconnected %compact metrizable abelian group is isomorphic to a \emph{group-shift}:  A subshift that also has a compact group structure. This follows from

The following example shows that for some expansive algebraic actions, having a  finitely presented Pontryagin dual does not imply p.o.t:
\begin{example}
Let $\Gamma= \langle a,b \rangle$ be the free group generated by two elements $a,b$. Consider the natural algebraic $\Gamma$-action  on the  Pontryagin dual of the following finitely presented  left $\ZZ\Gamma$-module:
$$\widehat{X} = \mathbb{Z}\Gamma / \langle a-2,b-2 \rangle.$$
So
$$X = \left\{ x\in (\mathbb{R}/\mathbb{Z})^\Gamma~:~  x_{ga} = 2 x_{g} \mbox { and } x_{gb} = 2 x_{g} \mbox{ for every } g \in \Gamma \right\}.$$
%It is clear from the definition that $\widehat{X}$ is a finitely presented  left $\ZZ\Gamma$-%module.
%On the other hand,
 Let
$$Y =  \left\{ x\in (\mathbb{R}/\mathbb{Z})^\Gamma~:~  x_{ga} = 2 x_{g}\right\}.$$
Then $\act{\Gamma}{Y}$ is expansive. Now let
$$ X_n = \left\{ x\in (\mathbb{R}/\mathbb{Z})^\Gamma~:~  x_{ga} = 2 x_{g} \mbox { and } \rho_\infty(x_{gb},2 x_{g})  \le  \frac{1}{n} \mbox{ for every } g \in \Gamma \right\}.$$
Then $X= \bigcap_{n=1}^\infty X_n$ is a strictly decreasing intersection of expansive $\Gamma$-systems, so it does not satisfy p.o.t., by Proposition \ref{prop:finite_type} and Remark \ref{rem:stable_intersection}.
\end{example}
%Now suppose  $\act{\Gamma}{X}$ is an expansive algebraic action and that the Pontryagin dual $\widehat X$ of $X$ is a finitely presented left $\ZZ\Gamma$-module.
%Identify $\hat{X}$ with $\widehat{(\mathbb{Z}\Gamma)^k / J}$, where $J$ is a finitely generated $\mathbb{Z}\Gamma$-submodule of $(\mathbb{Z}\Gamma)^k$.
%As before, by the algebraic characterization of expansive algebraic actions of \cite{MR3314515} above, there exists $A \in M_k(\ZZ\Gamma)$   being invertible in $M_k(\ell^1(\Gamma))$ %such that the rows of $A$ are contained in $J$. In particular  $\act{\Gamma}{X}$ naturally embeds in $\act{\Gamma}{X_A}$.  By Lemma \ref{} and Proposition \ref{} the proof will be %complete one we show that $\act{\Gamma}{X}$ is of finite type relatively to $\act{\Gamma}{X_A}$.****

%**** I'm a bit stuck on this part ! It amounts to the following:
%Suppose $A \in M_k(\ZZ \Gamma)$ is invertible in $M_k(\ell^1(\Gamma))$ and $f =(f_1,\ldots,f_k) \in (\ZZ\Gamma)^k$, then there exists $\epsilon >0$ so that if $y \in %([-1,1]^k)^\Gamma$ s.t $yA^* \in \ell^\infty(\ZZ^k,\Gamma)$ and
%$(y f)_g$ is $\epsilon$-close to an integer for every $g \in \Gamma$ then $yf$ has all integer coordinates.
%****
%****
%\end{proof}

\begin{question}\label{ques:noetherian_pot}
Let $\Gamma$ be a  countable group and suppose $\ZZ \Gamma$ is left Noetherian (for example, suppose $\Gamma$ is polycyclic-by-finite). Does every expansive $\alpha \in \Alg{\Gamma}{X}$ satisfy p.o.t?
\end{question}

An affirmative answer to Question \ref{ques:noetherian_pot} would in particular recover \cite[Corollary 9.12]{MR3314515}, an affirmative answer to Question \ref{ques:Chung_Li} for expansive algebraic $\Gamma$-actions when $\ZZ \Gamma$ is left Noetherian.
%Combining the above with Proposition \ref{prop:finite_type_entropy_off_diagonal} we recover \cite[Corollary 9.12]{MR3314515}:
%\begin{cor}
%Let $\Gamma$ be a countably infinite amenable group  and suppose that $\ZZ \Gamma$ is left Noetherian.
% Let $\Gamma$ act on a compact abelian group $X$ expansively by automorphisms. Then $\act{X}{\Gamma}$ has positive entropy if and only if $\Delta(X)$ is nontrivial.
%\end{cor}

\section{An expansive algebraic action with positive entropy and trivial homoclinic group}\label{sec:algebraic_exp_no_asymp}
In this section we prove Theorem \ref{thm:alg_exp_pos_ent_no_asymp}, providing a negative answer to Question \ref{ques:Chung_Li} within the class of algebraic actions.
By \cite{MR3314515},  if  $\alpha \in \Alg{\Gamma}{X}$ is an algebraic counterexample,  the acting group $\Gamma$ cannot be polycyclic-by-finite.
The construction makes crucial use of the fact that $\mathbb{Z}\Gamma$ is not left Noetherian for the group $\Gamma$ we use.
%Theorem \ref{thm:alg_exp_pos_ent_no_asymp} shows this is not always the case when $\Gamma$ is abelian (but not finitely generated) or when  $\Gamma$ is a finitely-generated solvable group. The reader can compare the construction below with \cite[Example $3.9$ (1), Page $23$]{MR1345152}.

%In this section $\Gamma$ will denote a countably \emph{infinite locally} finite group.
%Recall that a group $\Gamma$ is locally finite if and only if every finitely generated subgroup of $\Gamma$ is finite.
%A locally finite group is automatically amenable.

%Some examples of locally finite groups are the group $S_\infty = \overrightarrow{\lim}_{n \to \infty}S_n$ of finitely supported permutations of $\mathbb{N}$ and the direct sum %$\bigoplus_{n \in \mathbb{N}}\Gamma_n$ of finite groups $(\Gamma_n)_{n=1}^\infty$.
Let $(\Gamma_n)_{n=1}^\infty$ be a sequence of finite groups, and let $\Gamma := \bigoplus_{n=1}^\infty \Gamma_n$ be the direct sum of these groups.
Namely, $\Gamma$ is the countable subgroup of $\prod_{n=1}^\infty \Gamma_n$
\begin{equation}
\Gamma =\bigcup_{N=1}^\infty\left\{ (g_n)_{n=1}^\infty\in  \prod_{n=1}^\infty \Gamma_n ~:~ g_n=1_{\Gamma_n} \mbox{ for all } n \ge N\right\}.
\end{equation}

%\begin{lem}\label{lem:locally_fin_seq}
%Let $\Gamma$ be a locally finite group.
%then there exists a sequence $(\Gamma_n)_{n=1}^\infty$ of finite subgroups $\Gamma_n < \Gamma$ so that:
%\begin{enumerate}
%item
%The set $\bigcup_{n=1}^\infty \Gamma_n$ generates the group $\Gamma$.
%\item  For every finite set $F \subset \Gamma$ there exists $N \in \mathbb{N}$ so that for every $n \ge N$ $|F \cap \Gamma_n| \le 1$.
%\item For every $n \ge 1$ $|\Gamma_{n+1}| \ge n^2 |\Gamma_n|$.
%\end{enumerate}
%\end{lem}
%\begin{proof}
%*****
%\end{proof}

%Let $(\Gamma_n)_{n=1}^\infty$ be a sequence of finite subgroups as given by  Lemma \ref{lem:locally_fin_seq}.
For every $n \in \mathbb{N}$, we naturally identify $\Gamma_n$ with the following subgroup of $\Gamma$:
$$\Gamma_n \cong \left\{ (g_k)_{k=1}^\infty  \in \Gamma ~:~ g_k = 1_{\Gamma_k} \mbox{ for all } k \ne n \right\}$$

The reader can keep in mind the case $\Gamma_n = (\mathbb{Z}/2\mathbb{Z})^{a_n}$, where $a_n$ is some rapidly increasing sequence of integers.
In this case $\Gamma$ will be isomorphic to the countable abelian $2$-torsion group $\bigoplus_{\mathbb{N}}(\mathbb{Z}/2\mathbb{Z})$, also isomorphic to the additive group of polynomials over the finite field of size $2$.

For every $n \in \mathbb{N}$ let
\begin{equation}
\tilde \Gamma_n := \left\{(g_n)_{k=1}^\infty \in \Gamma~:~ g_k =1_{\Gamma_k} \mbox{ for all } k > n \right\}.
\end{equation}
%$\tilde \Gamma_n$ is the smallest group that contains $\Gamma_k$ for all $k \le n$.
 Clearly   $\tilde \Gamma_n \cong \bigoplus_{k=1}^{n}\Gamma_k$ is a finite subgroup of $\Gamma$. %, because it is finitely generated.
 Also, the sequence $(\tilde \Gamma_n)_{n=1}^\infty$ is a left-F{\o}lner sequence for $\Gamma$.

For $n \ge 1$ let $f_n \in \mathbb{Z}\Gamma$ be given by
\begin{equation}
 f_n := \sum_{g \in \Gamma_n}g,
\end{equation}
Let $J$ be the left $\mathbb{Z}\Gamma$-ideal generated by the element $2 \in \mathbb{Z}\Gamma$ and by $\{f_n\}_{n=1}^\infty$,
and let
$$ X :=  \widehat{\mathbb{Z} \Gamma / J}.$$
Then $X$ is a totally disconnected compact abelian group.  We will identify $X$ with the following $\Gamma$-subshift:
$$ X = \left\{ x \in (\mathbb{Z}/2\mathbb{Z})^\Gamma ~:~ x\cdot f_n = 0 \mbox{ for every } n \in \mathbb{N} \right\}=$$
$$=\left\{ x \in (\mathbb{Z}/2\mathbb{Z})^\Gamma ~:~ \sum_{h \in \Gamma_n}x_{gh} =0 \mbox{ for every } n \in \mathbb{N} \mbox{ and } g \in \Gamma\right\}.$$
For every $n \in \mathbb{N}$, choose $\gamma_n \in \Gamma_n$ so that $\gamma_n \ne 1$ for all but finitely many $n$'s.
Let
\begin{equation}
E = \{ (g_n)_{n=1}^\infty \in \Gamma~:~ g_n \ne \gamma_n \mbox{ for every } n\ge 1\}.
\end{equation}

\begin{lem}\label{lem:X_E_ext}
For every $w \in \{0,1\}^E$ there exists $x \in X$ such that $x\mid_E = w$.
\end{lem}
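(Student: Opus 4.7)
The plan is to exhaust $\Gamma$ by the finite subgroups $\tilde\Gamma_N=\bigoplus_{n=1}^N\Gamma_n$ and to build $x$ level by level on $\tilde\Gamma_N$. The key observation is that each constraint $\sum_{h\in\Gamma_n}x_{gh}=0$ defining $X$ is supported on a single coset of the finite subgroup $\Gamma_n$, and whenever $n\le N$ and $g\in\tilde\Gamma_N$ this coset lies entirely inside $\tilde\Gamma_N$. Since $\gamma_n\ne 1_{\Gamma_n}$ for all but finitely many $n$, I will first fix $N_0$ so that $\gamma_n\ne 1_{\Gamma_n}$ for every $n>N_0$; then for every $N\ge N_0$ one has
\[
E\cap\tilde\Gamma_N=\{g\in\tilde\Gamma_N:g_n\ne\gamma_n\text{ for }1\le n\le N\}=:E_N,
\]
and moreover $E_N\subseteq E_{N+1}$.

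The core of the argument will be the following claim, proved by induction on $N\ge N_0$: every $w\in\{0,1\}^{E_N}$ extends uniquely to $x^{(N)}\in\{0,1\}^{\tilde\Gamma_N}$ satisfying $\sum_{h\in\Gamma_n}x^{(N)}_{gh}=0$ for all $1\le n\le N$ and $g\in\tilde\Gamma_N$. For the inductive step I will identify $\tilde\Gamma_{N+1}$ with $\tilde\Gamma_N\times\Gamma_{N+1}$ and decompose any candidate extension into slices $x^{(g_{N+1})}(h):=x^{(N+1)}_{(h,g_{N+1})}$. For each $g_{N+1}\ne\gamma_{N+1}$, the slice meets $E_{N+1}$ in a natural copy of $E_N$, so the inductive hypothesis supplies a unique $x^{(g_{N+1})}$ satisfying all $n\le N$ constraints. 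The distinguished slice $g_{N+1}=\gamma_{N+1}$ is then forced by the single $\Gamma_{N+1}$-constraint on each $\tilde\Gamma_N$-coset:
\[
x^{(\gamma_{N+1})}(h):=\sum_{g_{N+1}\in\Gamma_{N+1}\setminus\{\gamma_{N+1}\}}x^{(g_{N+1})}(h).
\]

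The one substantive check is that this distinguished slice also satisfies the $n\le N$ constraints; this is pure $\mathbb{F}_2$-linearity, obtained by summing the corresponding constraint over the non-distinguished slices (each of which satisfies it by inductive hypothesis). Once the induction is established, uniqueness automatically delivers compatibility of the levels: both $x^{(N+1)}\mid_{\tilde\Gamma_N}$ and $x^{(N)}$ extend $w\mid_{E_N}$ and satisfy the $n\le N$ constraints, so they coincide. Assembling the $x^{(N)}$ then produces $x\in(\mathbb{Z}/2\mathbb{Z})^\Gamma$ with $x\mid_E=w$, and every constraint in the definition of $X$ is witnessed (and already verified) at some finite level $N\ge\max(n,\max\supp g)$, so $x\in X$.

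I expect the only (minor) obstacle to be the bookkeeping around the threshold $N_0$: for $N<N_0$ the set $E_N$ need not coincide with $E\cap\tilde\Gamma_N$, and the nesting $E_N\subseteq E_{N+1}$ could fail, which would spoil the patching. The hypothesis that $\gamma_n\ne 1_{\Gamma_n}$ for all but finitely many $n$ is built into the lemma precisely to sidestep this, and lets the induction run unobstructed from $N_0$ onward.
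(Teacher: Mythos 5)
Your approach is sound and genuinely different from the paper's. The paper constructs the extension $x$ by an explicit closed formula: for $g \notin E$ it sets $x_g$ to be the parity of $\omega$ summed over all ways of replacing each coordinate of $g$ that equals $\gamma_n$ by an element of $\Gamma_n \setminus \{\gamma_n\}$, and then verifies the constraints $\sum_{h \in \Gamma_n} x_{gh} = 0$ by one direct computation, splitting the $\Gamma_n$-coset sum according to whether the $n$-th coordinate equals $\gamma_n$ and matching terms. Your version instead builds $x$ layer by layer over the finite subgroups $\tilde\Gamma_N$, decomposing each new layer into $\Gamma_{N+1}$-slices and forcing the distinguished slice $g_{N+1}=\gamma_{N+1}$ from the others via the $\Gamma_{N+1}$-constraint; the remaining verification really is $\mathbb{F}_2$-linearity as you say. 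This is more structural, and it has the pleasant byproduct that the extension on each $\tilde\Gamma_N$ is \emph{unique}, which the paper never states but does implicitly rely on in the entropy count of Lemma \ref{lem:X_alg_lower_bound}: surjectivity of $X\to\{0,1\}^E$ only gives the lower bound on $N_n$, and the matching upper bound is exactly your uniqueness. The one thing to tidy up is the base case: you announce the induction for $N\ge N_0$ but never establish the claim at $N=N_0$. The cleanest fix is to define $E_N:=\prod_{n\le N}(\Gamma_n\setminus\{\gamma_n\})\subset\tilde\Gamma_N$ for all $N\ge0$ and start the induction at the trivial case $N=0$ (no constraints, $\tilde\Gamma_0=\{1\}$); the inductive step you describe never uses $\gamma_n\ne1$, so it runs unobstructed, and the identity $E_N=E\cap\tilde\Gamma_N$ together with the nesting $E_N\subset E_{N+1}$ is only needed for $N\ge N_0$ when you patch the levels together.
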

\begin{proof}
For $g \in \Gamma$ let
\begin{equation}
I(g) :=  \left\{ n\in \mathbb{N}~:~ g_n =\gamma_n\right\}
\end{equation}
and
\begin{equation}
T(g):= \left\{ (h_n)_{n \in I(g)}:~ h_n \in \Gamma_{n} \setminus \{\gamma_n\}\right\}.
\end{equation}
Fix $w \in \{0,1\}^E$. We define $x \in \{0,1\}^\Gamma$  as follows:
%For $g \in E$  let $x_g = w_g$. For $g \in \Gamma \setminus E$, let
\begin{equation}
x_g := \begin{cases}
\omega_g & g \in E\\
\sum_{(h_n)_{n \in I(g)} \in T(g)}\omega_{g\prod_{n \in I(g)}h_n} & \mbox{otherwise}.
\end{cases}
\end{equation}
Note that the definition is independent of the order of the product $\prod_{n \in I(g)}h_n$, because $\Gamma_n$ and  $\Gamma_m$ are commuting subgroups of $\Gamma$ for $n \ne m$.
It is clear that $x\mid_E = \omega$.
We will now verify that $x \in X$. Equivalently, we need to show that for every $g \in \Gamma$, and every $n \in \mathbb{N}$,
\begin{equation}\label{eq:sum_x_g}
\sum_{h \in \Gamma_n} x_{gh} = 0 \mod 2.
\end{equation}

Fix $g \in \Gamma$ and $n \in \mathbb{N}$.
%First consider the case that $g_n \ne \gamma_n$.
We have
$$\sum_{h \in \Gamma_n} x_{gh} = \sum_{h \in \Gamma_n} x_{gg_n^{-1}h} =
 \sum_{h \in  \Gamma_n \setminus  \{\gamma_n\}} x_{gg_n^{-1}h} +  x_{g g_n^{-1} \gamma_n}.$$
The first equality follows because $g_n \in \Gamma_n$ so $g_n\Gamma_n= \Gamma_n$,  so the summands in the sum on the left hand side of the equality are a permutation of the summands on the right.

Let
$$I(g)\setminus \{n\} = \left\{n_1,\ldots,n_k \right\}.$$
Then we have
$$I(g g_n^{-1}\gamma_n) = \{n\} \uplus \left\{n_1,\ldots,n_k \right\}.$$
By definition of $x$ it this follows that
$$ x_{g g_n^{-1} \gamma_n} = \sum_{h \in \Gamma_n \setminus \{\gamma_n\}} \sum_{h_1 \in \Gamma_{n_1}\setminus \{\gamma_{n_1}\}}\ldots\sum_{h_k \in \Gamma_{n_k}\setminus \{\gamma_{n_k}\}} \omega_{gg_n^{-1}\gamma_n h h_1\ldots h_k}.$$
Similarly for $h \in \Gamma_n \setminus  \{\gamma_n\}$ we have
$$I(gg_n^{-1}h) = \left\{n_1,\ldots,n_k \right\}.$$ Thus, for every $h \in \Gamma_n \setminus  \{\gamma_n\}$ we have:
$$  x_{gg_n^{-1}h} = \sum_{h_1 \in \Gamma_{n_1}\setminus \{\gamma_{n_1}\}}\ldots\sum_{h_k \in \Gamma_{n_k}\setminus \{\gamma_{n_k}\}} \omega_{gg_n^{-1}\gamma_n h h_1\ldots h_k}.$$
Thus,
$$ x_{g g_n^{-1} \gamma_n}=  \sum_{h \in  \Gamma_n \setminus  \{\gamma_n\}} x_{gg_n^{-1}h}.$$

% = \sum_{h \in  \Gamma_n \setminus  \{\gamma_n\}}  x_{gg_n^{-1}h}.$$
%We have
%$$ \sum_{h \in \Gamma_n} x_{gh} = \sum_{h \in \Gamma_n \setminus \{g_n^{-1} \gamma_n}} x_{gh} + x_{g g_n^{-1} \gamma_n}=$$
%$$ = \sum_{h \in \Gamma_n \setminus \{g_n^{-1} \gamma_n}}
%By definition, the term
%Thus in this case, $ \sum_{h \in \Gamma_n} x_{gh} =0 \mod 2$.
%It follows that for every $n \in \mathbb{N}$ and every $g \in \Gamma$,
%$$ \sum_{h \in \Gamma_n} x_{gh} =0 \mod 2.$$
We have thus shown that  \eqref{eq:sum_x_g} holds. So $x \in X$ and $x\mid_E = \omega$.
\end{proof}
\begin{lem}\label{lem:X_alg_lower_bound}
\begin{equation}\label{eq:h_lower_bound}
h(\act{\Gamma}{X}) = \prod_{n=1}^\infty(1 - |\Gamma_n|^{-1})\log 2.
\end{equation}
\end{lem}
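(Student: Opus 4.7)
The plan is to apply the standard formula for topological entropy of a $\Gamma$-subshift with respect to the left F{\o}lner sequence $(\tilde \Gamma_n)$, namely
$$h(\act{\Gamma}{X}) = \lim_{n \to \infty}\frac{1}{|\tilde \Gamma_n|}\log|\pi_{\tilde \Gamma_n}(X)|,$$
where $\pi_F$ denotes the projection of $X$ onto its coordinates in $F \subset \Gamma$. I will show that $|\pi_{\tilde \Gamma_n}(X)| = 2^{|E_n|}$ for $E_n := E \cap \tilde \Gamma_n$, and then evaluate the limit.

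Fix $N_0$ large enough that $\gamma_k \ne 1_{\Gamma_k}$ for every $k > N_0$. Then for every $n \ge N_0$ one has $E_n = \{g \in \tilde \Gamma_n : g_k \ne \gamma_k,\; 1 \le k \le n\}$, so
$$|E_n| = \prod_{k=1}^n (|\Gamma_k|-1), \qquad |\tilde \Gamma_n| = \prod_{k=1}^n |\Gamma_k|.$$
For the lower bound, Lemma \ref{lem:X_E_ext} says that every $\omega \in \{0,1\}^E$ extends to some $x \in X$. In particular, any $\omega \in \{0,1\}^{E_n}$ can be completed arbitrarily on $E \setminus E_n$ and then extended to $X$; distinct $\omega$'s yield distinct patterns on $\tilde \Gamma_n$, so $|\pi_{\tilde \Gamma_n}(X)| \ge 2^{|E_n|}$.

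For the upper bound I argue that the restriction $\pi_{\tilde \Gamma_n}(X) \to \{0,1\}^{E_n}$ is injective. Every $x \in X$ satisfies $\sum_{h \in \Gamma_k} x_{gh} \equiv 0 \pmod 2$ for all $g \in \tilde \Gamma_n$ and $1 \le k \le n$, and these sums involve only coordinates in $\tilde \Gamma_n$. Induct on $|I(g) \cap \{1,\ldots,n\}|$, with $I(g)$ as defined in the proof of Lemma \ref{lem:X_E_ext}. If this set is empty, then $g \in E_n$. Otherwise, pick any $k$ in it and apply the $\Gamma_k$-constraint with shift $g$ to write $x_g$ as the $\mathbb{F}_2$-sum of the values $x_{gh}$ for $h \in \Gamma_k \setminus \{1_\Gamma\}$; for such $h$ one has $(gh)_k = \gamma_k h_k \ne \gamma_k$ while $(gh)_j = g_j$ for $j \ne k$, so $I(gh) = I(g)\setminus\{k\}$, strictly reducing the induction parameter. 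Hence $x|_{\tilde \Gamma_n}$ is determined by $x|_{E_n}$, proving $|\pi_{\tilde \Gamma_n}(X)| \le 2^{|E_n|}$.

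Combining the two bounds,
$$h(\act{\Gamma}{X}) = \lim_{n \to \infty} \frac{|E_n|\log 2}{|\tilde \Gamma_n|} = \log 2 \cdot \prod_{k=1}^\infty \bigl(1 - |\Gamma_k|^{-1}\bigr).$$
The only non-routine ingredient is the inductive reduction used for the upper bound; it is essentially the explicit inverse furnished by the formula in the proof of Lemma \ref{lem:X_E_ext}, and its key feature is that for $k \le n$ the $\Gamma_k$-constraint keeps us inside $\tilde \Gamma_n$ while strictly shrinking $|I(g)\cap\{1,\ldots,n\}|$.
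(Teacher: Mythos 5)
Your proof is correct and follows the same route as the paper: both compute $N_n := |\pi_{\tilde\Gamma_n}(X)|$ and then pass to the entropy limit along the F{\o}lner sequence $(\tilde\Gamma_n)$. The one place where you supply something the paper leaves implicit is the upper bound $N_n \le 2^{|E_n|}$: the paper simply asserts that $N_n = 2^{\prod_{k=1}^n(|\Gamma_k|-1)}$ ``follows from Lemma \ref{lem:X_E_ext},'' but that lemma directly gives only the extension (hence the lower bound); your induction on $|I(g)\cap\{1,\dots,n\}|$, using that the $\Gamma_k$-constraints for $k\le n$ stay inside $\tilde\Gamma_n$ and strictly shrink $I(g)$, is exactly the missing injectivity argument and is the correct inverse to the explicit formula used in the proof of Lemma \ref{lem:X_E_ext}.
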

\begin{proof}
Let
$$ N_n := \# \left\{ x\mid_{\tilde \Gamma_n} ~:~ x \in X\right\}.$$
From  Lemma \ref{lem:X_E_ext}, it follows that
$$ N_n = 2^{\prod_{k=1}^n(\Gamma_k \setminus \{\gamma_k\})}.$$
Because $\{\tilde \Gamma_n\}$ is a left-F{\o}lner sequence, it follows that
$$h(\act{\Gamma}{x})  =  \lim_{n \to \infty} \frac{\log N_n}{|\tilde \Gamma_n|}.$$
Thus,
$$h(\act{\Gamma}{x}) =\lim_{n \to \infty}\frac{\prod_{k=1}^n\left|\Gamma_k \setminus \{\gamma_k\} \right|}{\prod_{k=1}^n |\Gamma_k|} \log 2 .$$
From this we immediately get  \eqref{eq:h_lower_bound}.
\end{proof}

\begin{lem}\label{lem:X_alg_no_asymp}
The action $\act{\Gamma}{X}$ has no off-diagonal asymptotic pairs.
\end{lem}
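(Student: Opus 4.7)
The plan is to reduce the statement to showing that the homoclinic group $\Delta(X)$ is trivial, and then to exploit the defining relations of $X$ with $n$ large to kill any hypothetical finite-support element.

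First I would observe that, since $\act{\Gamma}{X}$ is algebraic, $(x,y)$ is an asymptotic pair if and only if $x-y \in \Delta(X)$. Thus it suffices to prove $\Delta(X) = \{0\}$. Next, since $X$ is a $\Gamma$-subshift inside $(\mathbb{Z}/2\mathbb{Z})^\Gamma$ equipped with the product topology, a point $z \in X$ is homoclinic (i.e.\ $\sigma_g(z) \to 0$ as $g \to \infty$) if and only if its support $\supp(z) := \{g \in \Gamma : z_g \ne 0\}$ is finite: one direction is clear, and the converse follows because if $\supp(z)$ is infinite, there exists an infinite sequence $g_n \to \infty$ with $z_{g_n} \ne 0$, so the first coordinate of $\sigma_{g_n^{-1}}(z)$ is $1$ infinitely often.

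So the task reduces to: every $z \in X$ with finite support is zero. Suppose for contradiction $z \in X$ has finite nonempty support. Since $\Gamma = \bigcup_{n} \tilde \Gamma_n$, one can choose $n$ large enough so that $\supp(z) \subset \tilde\Gamma_n$. Now pick any $m > n$. By the defining relations of $X$, for every $g \in \Gamma$,
\begin{equation*}
\sum_{h \in \Gamma_m} z_{gh} = 0 \pmod{2}.
\end{equation*}
I would apply this with an arbitrary $g \in \tilde\Gamma_n$. For $h \in \Gamma_m \setminus \{1\}$, the element $gh$ has $m$-th coordinate equal to $h \ne 1_{\Gamma_m}$, and since any element of $\tilde\Gamma_n$ has trivial $m$-th coordinate (because $m > n$), we conclude $gh \notin \tilde\Gamma_n$, hence $z_{gh} = 0$. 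Therefore the relation forces $z_g = 0$. As this holds for every $g \in \tilde\Gamma_n \supseteq \supp(z)$, we get $z = 0$, contradicting the assumption that $\supp(z)$ is nonempty.

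There is no real obstacle here beyond arranging the bookkeeping of indices; the substantive content is the observation that the relations $f_m$ for arbitrarily large $m$ make it impossible for a finitely supported configuration to be consistent with the system, which is precisely why $X$ has trivial homoclinic group despite Lemma \ref{lem:X_alg_lower_bound} giving positive entropy.
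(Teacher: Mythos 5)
Your proof is correct and follows essentially the same route as the paper: reduce to showing the homoclinic group is trivial (equivalently, that a finitely supported $z\in X$ must vanish), pick $n$ large enough that the $n$-th coordinate of every element of the support is trivial, and then invoke the relation $\sum_{h\in\Gamma_n}z_{gh}=0$ to force $z_g=0$. The paper's wording is slightly more economical (it only requires the single coordinate $g_n$ to vanish on $F$, rather than $\supp(z)\subset\tilde\Gamma_n$), but the content and the key observation are identical.
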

\begin{proof}
We will show that the only homoclinic point $x \in X$ is the identity element.
In other words, we will show that if $x \in X$ and $F \subset \Gamma$ is a finite set such that
$x_g = 0$ for all $g \in \Gamma \setminus F$, then $x_g =0$ for all $g \in \Gamma$.
Indeed, because $F$ is finite, there exists $n \in \mathbb{N}$ such that $g_n = 1$ for all $g \in F$. Suppose $g \in F$.
Then for every $h \in \Gamma_n \setminus \{1\}$, $gh \not\in F$.
It follows that
$$x_g = \sum_{h \in \Gamma_n} x_{gh} =0.$$\
This proves that $x_g =0$ for all $g \in F$, thus $x$ is the identity element of $X$.
\end{proof}

The above construction with  $\Gamma_n = (\mathbb{Z}/2\mathbb{Z})^{a_n}$ completes the proof %of the first part
 of Theorem \ref{thm:alg_exp_pos_ent_no_asymp}.

Using the characterization of completely positive entropy for from \cite{MR3314515}, we can further show that the construction above has completely positive entropy:

Let $\mathcal{B}_X$ denote the Borel $\sigma$-algebra on $X$, and let $\mu_X$ denote Haar measure on $X$, normalized to be a probability measure.
Then $\act{\Gamma}{(X,\mathcal{B}_X,\mu_X)}$ is a probability preserving $\Gamma$-action.

\begin{prop}
If $ \prod_{n=1}^\infty(1 - |\Gamma_n|^{-1})>0$ then the measure preserving action $\act{\Gamma}{(X,\mathcal{B}_X,\mu_X)}$ has completely positive entropy.
\end{prop}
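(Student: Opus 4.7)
The plan is to apply the characterization of completely positive entropy for algebraic actions of countable amenable groups from \cite{MR3314515}, which expresses cpe as the condition that every non-trivial continuous $\Gamma$-equivariant factor of $\act{\Gamma}{X}$ has positive topological entropy. By Pontryagin duality, non-trivial factors of $X$ are in bijection with non-zero $\ZZ\Gamma$-sub-modules of $\widehat X = \ZZ\Gamma / J$, and since every non-zero sub-module contains a non-zero cyclic sub-module and further factoring cannot increase entropy, it suffices to prove positive entropy of the factor dual to every non-trivial cyclic sub-module.

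A non-trivial cyclic sub-module has the form $\ZZ\Gamma \cdot(m + J)$ with $m \in \ZZ\Gamma \setminus J$, and its dual factor is the image $\pi_m(X) \subset (\ZZ/2\ZZ)^\Gamma$ of the $\Gamma$-equivariant ``convolution with $m^*$'' map $\pi_m:X \to (\ZZ/2\ZZ)^\Gamma$ given by $\pi_m(x)_g = \sum_{h \in \supp(m)} m_h \cdot x_{gh}\!\!\mod 2$. The task reduces to showing $h\bigl(\act{\Gamma}{\pi_m(X)}\bigr) > 0$ for every $m \notin J$.

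For this, the plan is to adapt the ``free set'' mechanism of Lemma \ref{lem:X_E_ext}. Given $m \in \ZZ\Gamma\setminus J$, I would first observe that $m \notin J$ implies that the image $\pi_m(X)$ contains some non-zero element, and in fact, by invariance under the $\Gamma$-action and the group structure, $\pi_m(X)$ contains a \emph{rich} subgroup. Concretely, for any $w \in \{0,1\}^E$ the point $x_w \in X$ produced by Lemma \ref{lem:X_E_ext} gives a point $\pi_m(x_w) \in \pi_m(X)$, and since the assignment $w \mapsto x_w$ is $\FF_2$-linear, so is $w \mapsto \pi_m(x_w)$. The key combinatorial step will be to identify a subset $E_m \subseteq \Gamma$ of positive F{\o}lner-density on which this linear map has full rank, i.e.\ for which every element of $\{0,1\}^{E_m}$ is realized as $\pi_m(x_w)|_{E_m}$ for some $w$. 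Counting pre-images along the F{\o}lner sequence $(\tilde\Gamma_n)$ and dividing by $|\tilde\Gamma_n|$ would then yield $h(\pi_m(X)) \ge (\liminf_n |E_m \cap \tilde\Gamma_n|/|\tilde\Gamma_n|) \log 2 > 0$.

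The hard part is producing the set $E_m$ and verifying that its density is positive uniformly in $m$; heuristically one expects $E_m$ to be a translate of $E$ shifted by some $g_0 \in \supp(m)$ (with some modification to avoid cancellations), using that $\prod_{n=1}^\infty(1 - |\Gamma_n|^{-1}) > 0$ ensures positive density of $E$ itself. A convenient fallback, in case the direct construction of $E_m$ is delicate, is the Li--Liang addition formula $h(X) = h(\ker \pi_m) + h(\pi_m(X))$ (valid for algebraic actions of amenable groups), which reduces the problem to showing the strict inequality $h(\ker \pi_m) < h(X)$ for every $m \notin J$; this strict inequality can be extracted by the same free-set analysis applied to the proper closed $\Gamma$-invariant subgroup $\ker \pi_m \subsetneq X$.
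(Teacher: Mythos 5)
Your plan diverges substantially from the paper's argument, and as written it has a genuine gap that you yourself flag.

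The paper does \emph{not} reduce to factors. It invokes \cite[Corollary 8.4]{MR3314515}, which characterizes completely positive entropy for algebraic actions of amenable groups as the condition $\mathit{IE}(X)=X$ (the IE-group equals the whole group), and then verifies this directly: for cylinder open sets $U_0=[0]_{\tilde\Gamma_n}$, $U_1=[x]_{\tilde\Gamma_n}$, it builds an independence set $F'\subset F$ of density $\ge c$ inside any F{\o}lner set $F$ by a greedy coordinate-by-coordinate pruning, and then applies Lemma \ref{lem:X_E_ext} to certify independence. This is a bare-hands construction and does not pass through any factor analysis.

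Your proposal has two problems. First, the characterization you attribute to \cite{MR3314515} --- ``cpe iff every non-trivial continuous $\Gamma$-equivariant factor has positive topological entropy'' --- is not what that paper states. Completely positive entropy is a measure-theoretic K-type condition: every non-trivial \emph{measure} factor has positive \emph{measure} entropy. Reducing this to algebraic factors (duals of sub-modules) requires the non-trivial theorem that the Pinsker factor of an algebraic action is itself an algebraic factor; that needs a precise citation, and if you have it, you might as well cite the IE-characterization the paper uses directly. Second, and more seriously, you explicitly acknowledge the missing step: constructing the positive-density set $E_m$ on which $w \mapsto \pi_m(x_w)|_{E_m}$ has full rank, uniformly over $m\notin J$. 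This is not a routine adaptation of Lemma \ref{lem:X_E_ext}: the element $m$ may have arbitrarily large support touching many of the $\Gamma_k$, and the ``free coordinates'' on $E$ become correlated after convolution with $m^*$, so showing full rank on a single universal set of positive density is precisely where the content lies. The fallback via the Yuzvinskii-type addition formula moves the gap to ``$h(\ker\pi_m)<h(X)$ for every $m\notin J$'' without resolving it. As it stands, the core of the argument is not carried out, so the proposal does not constitute a proof.
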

\begin{proof}
By \cite[Corollary $8.4$]{MR3314515} we need to show that $\mathit{IE}(X)=X$. Equivalently (see \cite[Definition $2.3$]{MR3314515})  we need to show that for any pair of non-empty open sets $U_0,U_1$ with $0 \in U_0$  there exists a finite $K \subset \Gamma$  and $c,\epsilon >0$ such that for any finite set $F \subset \Gamma$ with $|K F \setminus F| < \epsilon |F|$, the pair $(U_0,U_1)$ has an independence set $F' \subset F$ with $|F'| \ge c |F|$. In our case it suffices to prove the above holds for open sets  $U_0,U_1$ of the form:
$$U_0 = [0]_{\tilde \Gamma_n}\mbox { and } U_1 = [x]_{\tilde \Gamma_n},~ x \in X\; n \in \mathbb{N}.$$
%Let $K :=\tilde \Gamma_n$, $\epsilon=1$ and
The choice of $K$ and $\epsilon$ will not be relevant for us.
Let $$c = \frac{1}{2}|\tilde \Gamma_n|^{-1} \prod_{k=n+1}^\infty (1-|\Gamma_k|^{-1}).$$

%Suppose $F \subset \Gamma$ is a finite set so that $|KF\setminus F| < \epsilon |F|$.  *** Do I need $K$ and $\epsilon$? and this condition? ***
Choose any finite set $F \subset \Gamma$.
For any choice of   $\tilde \gamma_k \in \Gamma_k$ for $1 \le k \le n$, let
$$F_{\tilde \gamma_1,\ldots,\tilde \gamma_n} := \left\{(g_k)_{k=1}^\infty \in  F~:~ g_k = \gamma_n \mbox{ for all } k\le n  \right\}.$$
It follows that  the sets $F_{\tilde \gamma_1,\ldots,\tilde \gamma_n}$ are a partition of $F$ into $|\tilde \Gamma_n|$ sets, so there exists a choice of $\tilde \gamma_1,\ldots,\tilde \gamma_n$ as above so that
$|F_{\tilde \gamma_1,\ldots,\tilde \gamma_n} | \ge |\tilde \Gamma_n|^{-1} |F|$.
Next, for every $k > n$ define $F_k \subset \Gamma$ and $\gamma_k \in \Gamma_k$ by induction as follows:
To start the induction, set $F_n := F_{\tilde \gamma_1,\ldots,\tilde \gamma_n}$.  Assume $F_k$ has been defined. Choose $\gamma_{k+1} \in \Gamma_{k+1}$ so that
$$\left|  \left\{(g_j)_{j=1}^\infty \in  F_k~:~ g_{k+1}=\gamma_{k+1}\right\}\right| \le |\Gamma_{k+1}|^{-1} |F_k|,$$
and let
$$F_{k+1} :=   \left\{(g_j)_{j=1}^\infty \in  F_k~:~ g_{k+1}\ne\gamma_{k+1}\right\}.$$
Let $F' = \bigcap_{k=n+1}^\infty F_k$.
Note that  the decreasing sequence $\ldots \subset F_{k+1} \subset F_k$ stabilizes because $F$ is finite. Also note that $\gamma_n \ne 1$ for all but finitely many $n$'s.
From the construction and choice of $c$ it is clear that
$|F'| \ge c |F|$. An application of  Lemma \ref{lem:X_E_ext} with the corresponding $\gamma_k$'s so that $\gamma_k \ne \tilde \gamma_k$ for $k \le n$, gives that $F'$ is an independence set for $(U_0,U_1)$.
\end{proof}

%*** Todo: Modify the above example to an algebraic action of the lamplighter group with positive entropy and no off-diagonal asymptotic pairs. ****
\section{Positive entropy subshifts without off-diagonal  asymptotic pairs}\label{sec:positive_exp_no_asymp}

%**** Check: Changed $G$ to $\Gamma$ and move to left action throughout this section! ***

In this section we prove Theorem \ref{thm:exapnsive_pos_entropy_no_asymp}.
We actually prove a slightly more general result.
%, where the group  $\mathbb{Z}$ can by a large class of groups.
This extra generalization does not seem to complicate the construction much and we hope it  helps isolate the main idea.
Throughout this section $\Gamma$ will be  a countably infinite  amenable group that is \emph{residually finite}.

Recall that a group $\Gamma$ is residually finite if and only if there exist a decreasing sequence of normal subgroups so that
\begin{equation}\label{eq:Res_fin}
\ldots \Gamma_{n+1} \lhd \Gamma_n \lhd \ldots \Gamma_1 \lhd \Gamma_0 = \Gamma, \mbox{ and } \bigcap_{n=1}^\infty \Gamma_n= \{1\}.
\end{equation}
and
\begin{equation}\label{eq:Res_fin_2}
[\Gamma:\Gamma_n] < \infty \mbox{ for every } n \ge 1.
\end{equation}

%The easiest examples for residually finite amenable groups are $\mathbb{Z}^d$ for %every $d \ge 1$.
%Non-abelian free groups are the basic example of non-amenable groups that are %residually finite.

%As announced in the introduction, in this section we describe a $G$-subshift that has positive entropy and no off-diagonal asymptotic pairs.
Let
\begin{equation}
\AAA= \{0,1,2\} \cong \ZZ/3\ZZ.
\end{equation}
%Note that $\AAA$ is a finite field. We will not make essential use of that,  but t
%We will use that $\AAA$ is a   group.
%structure on $\AAA$ will be useful for our construction.
Fix a strictly decreasing sequence of finite-index normal  subgroups $(\Gamma_n)_{n=1}^\infty$ with trivial intersection as in \eqref{eq:Res_fin}. For every $n \ge 1$, let
\begin{equation}
b_n := [\Gamma:\Gamma_n] \mbox{ and } a_n := [\Gamma_{n-1}:\Gamma_{n}].
\end{equation}
For every $n \ge 1$, choose  a set of representatives  $T_n \subset \Gamma_{n-1}$ for $\Gamma_{n}$-cosets in $\Gamma_{n-1}$. Namely, the following holds:
\begin{equation}\label{eq:T_n_g_N}
\Gamma_{n-1} = \biguplus_{t \in T_n}\Gamma_{n}t=\biguplus_{t \in T_n}t\Gamma_{n}.
\end{equation}
Furthermore, assume that  $1 \in  T_n$ for every $n \ge 1$.
Note that $|a_n| > 1$ for every $n$, and so by passing to a subsequence we can make sure the sequence $(a_n)_{n=1}^\infty$ grows sufficiently fast in order to satisfy some conditions that will be stated later.
%asymptotically faster than any prescribed sequence.

It follows that $|T_n|= [\Gamma_{n-1}:\Gamma_{n}]= a_n$.
Also, let
\begin{equation}\label{eq:E_n_def}
E_{n} := T_n\cdot \ldots \cdot T_1 = \left\{ t_n\cdot\ldots\cdot t_1~:~ t_i \in T_i ~ i =1,\ldots,n\right\}.
\end{equation}

Note that
\begin{equation}\label{eq:G_n_E_n}
 \Gamma= \biguplus_{g \in E_n} \Gamma_{n}g= \biguplus_{g \in E_n} g\Gamma_{n},
\end{equation}
and $|E_n| = [\Gamma:\Gamma_{n}] = b_{n}$.
It will be useful to define
\begin{equation}
\Gamma_0 := \Gamma,~ T_0:= \{1\}\mbox{  and } a_0:= b_0 := 1.
\end{equation}
Note that $b_n = a_n \cdot b_{n-1}$.

For $E \subset \Gamma$, $w \in \AAA^E$ and $g \in \Gamma$, let $g \cdot w \in \AAA^{gE}$ be given by
\begin{equation}
 (g \cdot w)_{ g f}:= w_f \mbox{ for } f \in E.
\end{equation}

%We identify  $\{0,1,2\}$ with $\FF_3 = \ZZ/3\ZZ$.
%The construction takes as input  a sequence of integers $(a_n)_{n=1}^\infty$ with $a_n  > 2$ for all $n \in \NN$. We will need the sequence $(a_n)_{n=1}^\infty$  to be rapidly growing enough (the precise %conditions will appear later). Let $b_n= \prod_{k=1}^n a_k$. It will be useful to define $a_0= b_0 = 1$, so $b_n = a_n \cdot b_{n-1}$.
%By induction, construct sequences of words $w_n,s_n \in \FF_3^{b_n}$ and $A_n \subset \FF_3^{b_n}$ as follows:

The first step in our construction is to  inductively define  sequences $(w^{(n)})_{n=0}^\infty$ and $(s^{(n)})_{n=1}^\infty$  with $w^{(n)},s^{(n)} \in \AAA^{E_n}$ together with  sequences $A_n \subset \AAA^{E_n}$ so that $w^{(n)} \in A_n$ as follows:

To start the induction define
$w^{(0)} \in \AAA^{T_0} \cong \AAA$  by $w^{(0)}_1 := 0$, and  define $A_0 := \AAA^{T_0}$.

%Let $A_{n} \subset \FF_3^{b_n}$ be the set of  words which start with $w_{n-1}$ followed by a concatenation of $a_n-1$ words  %$m_1,\ldots,m_{a_n-1} \in A_{n-1}$
%so that:
%\begin{enumerate}
%\item $m_j \ne w_{n-1}$ for $j=1,\ldots, a_n-2$.
%\item $ w_{n-1}\oplus m_{1}\oplus \ldots \oplus  m_{a_n-1} = 0^{b_{n-1}}$, where $\oplus$ represents coordinate-wise addition %in $\FF_3^{b_{n-1}}$.
%\end{enumerate}
%Let
%$$ B_n := \left\{m_1m_2m_2\ldots m_{a_n} \in \FF_3^{b_n} ~:~  m_1 = w_1 \mbox{ and } m_2,\ldots,m_{a_n} \in A_{n-1 } \setminus \{w_{n-1}\}\right\}.$$

Suppose $n \ge 1$, that $w^{(n-1)}$ and $A_{n-1}$ have been defined.

Let
\begin{equation}
 B_n' := \left\{ w \in \AAA^{E_n} ~:~   t^{-1} \cdot  w \mid_{E_{n-1}} \in  A_{n-1} \setminus \{w^{(n-1)}\} ~ \forall t \in T_n \setminus \{1\}\right\},
\end{equation}
and
\begin{equation}
 B_n = \left\{ w \in B_n' ~:~ w \mid_{E_{n-1}} = w^{(n-1)}\right\}.
\end{equation}

%$B_n$ is the set of words of length $b_n$ over the alphabet $A$ that are concatenation of words in $A_{n-1}$.

For $s \in \AAA^{E_{n-1}}$ let
\begin{equation}\label{eq:C_s_n_def}
 C_{s,n} := \left\{ w \in B_n ~:~ \sum_{t \in T_n} (t^{-1} \cdot w)\mid_{E_{n-1}} = s \right\}.
\end{equation}

%and define $f_n:\FF_3^{E_{n-1}} \to \NN$ by
%$$f_n(s) = |C_{s,n}|,$$
In \eqref{eq:C_s_n_def} above and elsewhere we sum elements of $\AAA^{E_{n-1}}$ pointwise, as $\AAA$-valued functions, recalling that $\AAA = \mathbb{Z}/3\mathbb{Z}$.
Thus $B_n = \biguplus_{s \in \AAA^{E_{n-1}}}C_{s,n}$.
Choose  $s^{(n-1)} \in \AAA^{E_{n-1}}$ so that
\begin{equation}
|C_{s^{(n-1)},n}| = \max\left\{|C_{s,n}|~:~ s \in \AAA^{E_{n-1}}\right\}.
\end{equation}

Now define
$$A_n := C_{s^{(n-1)},n},$$
and choose $w_n$ to be an arbitrary element of $A_n$.

\begin{example}
Consider the case $\Gamma = \ZZ$. Let $\Gamma_1 = 4 \ZZ$, $\Gamma_2 = 12 \ZZ$, $E_1 = T_1 = \{0,1,2,3\}$, $T_2= \{0,4,8\}$, $E_2 = \{0,1,2,\ldots,11\}$. We have
 $a_1 = 4$ and $a_2 =3$. Because $w^{(0)} = 0 $, it follows that
$A_1 = \{1,2\}^{\{0,1,2,3\}}$.
Considering elements of $A_1$ as strings of length $4$ over the alphabet $A_1$, we have
$$ C_{0,1} = \left\{ 0111, 0222\right\}, C_{1,1} = \{0121,0211,0112\}, C_{2,1}=\{0221,0122,0212\} .$$
So  at this point the procedure above allows to choose  either  $s^{(1)} =1$ or $s^{(1)}=2$. Suppose we choose $s^{(1)} =1$. Then
$$A_1= C_{1,1} = \{0121,0211,0112\},$$
and we can choose for example $w^{(1)} = 0121$.
\end{example}

We note  that if for some $n$ it happens that $|A_n| \le 2$, then $|A_{n+1}| = 1$ and $|A_{n+2}|= \emptyset$, so we will not be able to continue the construction.
The following lemma shows that this does not happen if the sequence $(a_n)_{n=1}^\infty$ grows rapidly.
\begin{lem}\label{lem:A_n_big}
If $A_n \ne \emptyset$ then
\begin{equation}\label{eq:A_n_big}
|A_{n+1}| \ge \frac{1}{3^{b_{n}}}\left( |A_n| -1\right)^{a_{n+1} -1}.
\end{equation}

\end{lem}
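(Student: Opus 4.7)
The plan is to reduce \eqref{eq:A_n_big} to two ingredients: an exact count of $|B_{n+1}|$ obtained from a free-product decomposition of $\AAA^{E_{n+1}}$, and a pigeonhole argument applied to the partition $B_{n+1}=\biguplus_{s\in \AAA^{E_n}}C_{s,n+1}$. Concretely, if I can show that $w\in B_{n+1}$ is specified by freely choosing $a_{n+1}-1$ independent patches from $A_n\setminus\{w^{(n)}\}$, then $|B_{n+1}|=(|A_n|-1)^{a_{n+1}-1}$, and averaging over the $3^{b_n}$ possible sums $s$ immediately produces the bound.

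The key structural step is to verify that $E_{n+1}=\biguplus_{t\in T_{n+1}} tE_n$ as a disjoint union. Any collision $t_1 e_1=t_2 e_2$ with $t_i\in T_{n+1}\subset \Gamma_n$ and $e_i\in E_n$ would force $e_2 e_1^{-1}=t_2^{-1}t_1\in \Gamma_n$, hence $\Gamma_n e_1=\Gamma_n e_2$; but by \eqref{eq:G_n_E_n} the right $\Gamma_n$-cosets $\{\Gamma_n e:e\in E_n\}$ are distinct, so $e_1=e_2$ and $t_1=t_2$. Combined with the identity $|T_{n+1}|\,|E_n|=a_{n+1}b_n=b_{n+1}=|E_{n+1}|$, this gives the disjoint union and lets me regard every $w\in \AAA^{E_{n+1}}$ as a free tuple of patches $(t^{-1}\cdot w)\mid_{E_n}\in \AAA^{E_n}$ indexed by $t\in T_{n+1}$. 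The defining conditions for $B_{n+1}$ then read: since $1\in T_{n+1}$, the patch at $t=1$ must equal $w^{(n)}$, while each of the remaining $a_{n+1}-1$ patches must lie in $A_n\setminus\{w^{(n)}\}$, a set of size $|A_n|-1$ because $w^{(n)}\in A_n$ by construction. Hence $|B_{n+1}|=(|A_n|-1)^{a_{n+1}-1}$.

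To finish, I note that $B_{n+1}=\biguplus_{s\in \AAA^{E_n}}C_{s,n+1}$ is a partition into at most $|\AAA|^{|E_n|}=3^{b_n}$ parts, so by pigeonhole some $s\in \AAA^{E_n}$ satisfies $|C_{s,n+1}|\ge |B_{n+1}|/3^{b_n}$; since $s^{(n)}$ is chosen to maximise $|C_{s,n+1}|$ and $A_{n+1}=C_{s^{(n)},n+1}$, this yields \eqref{eq:A_n_big}. I expect the only real subtlety to be the disjoint-union step, which is where the choice of coset representatives $T_{n+1}\subset \Gamma_n$ enters essentially; once that is in place, the remainder is routine counting and pigeonhole.
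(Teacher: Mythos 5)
Your proof is correct and follows essentially the same route as the paper: compute $|B_{n+1}|=(|A_n|-1)^{a_{n+1}-1}$ via the patch decomposition $E_{n+1}=\biguplus_{t\in T_{n+1}}tE_n$, then use the partition $B_{n+1}=\biguplus_{s}C_{s,n+1}$ and the maximizing choice of $s^{(n)}$ to conclude. The paper asserts the bijection $B_{n+1}\cong(A_n\setminus\{w^{(n)}\})^{a_{n+1}-1}$ as ``clear''; you supply the short verification that $T_{n+1}\times E_n\to E_{n+1}$ is a bijection, which is a reasonable detail to make explicit.
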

\begin{proof}
Because $B_{n+1} = \biguplus_{ s\in \AAA^{E_{n}}}C_{s,n+1}$, it follows that
\begin{equation}\label{eq:b_n_ineq}
|B_{n+1}| = \sum_{s \in \AAA^{E_{n}}} |C_{s,n+1}|\le 3^{b_{n}} \max\{C_{s,n+1}~:~s \in \AAA^{E_{n}}\} =  3^{b_{n}} |A_{n+1}|.
\end{equation}
It is clear that $B_{n+1}$ is in bijection with $(A_n \setminus \{w_n\})^{a_{n+1}-1}$, so
\begin{equation}\label{eq:B_n_card}
|B_{n+1}| = \left( |A_n| -1\right)^{a_{n+1} -1}.
\end{equation}
The inequality \eqref{eq:A_n_big} follows from \eqref{eq:b_n_ineq} and \eqref{eq:B_n_card}.
\end{proof}

\begin{lem}\label{lem:A_n_not_empty}
Suppose
\begin{equation}\label{eq:a_n_big_1}
a_{n+1} \ge 2b_{n} +3 \mbox{ for every } n \ge 1.
\end{equation}
 Then for every $n \ge 1$:
\begin{equation}\label{eq:A_n_LB}
|A_n| \ge 3^{b_{n-1}}+1
\end{equation}
%\begin{enumerate}
%\item[(i)] $|A_n| \ge 3^{b_{n-1}}+1$  for every $n \ge 1$.
% \item[(ii)] $|A_n| \ge 3^{n-1}$ for every $n \ge 2$.
%\end{enumerate}
In particular  $A_n \ne \emptyset$ for every $n \ge 0$.
\end{lem}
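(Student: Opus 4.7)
The plan is to prove the inequality $|A_n| \geq 3^{b_{n-1}} + 1$ by induction on $n$, with the ``in particular'' clause following immediately (noting that $A_0 = \mathcal{A}^{T_0}$ is nonempty by construction).

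\textbf{Base case} ($n=1$). I would compute $|B_1|$ directly from the definitions. Since $E_1 = T_1$, $E_0 = \{1\}$, $A_0 = \mathcal{A}$, and $w^{(0)}_1 = 0$, the set $B_1$ consists of those $w \in \mathcal{A}^{T_1}$ with $w_1 = 0$ and $w_t \in A_0 \setminus \{0\} = \{1,2\}$ for each $t \in T_1 \setminus \{1\}$, so $|B_1| = 2^{a_1 - 1}$. Since $A_1$ is the largest piece in the partition of $B_1$ into at most $3^{b_0} = 3$ fibers of the map $w \mapsto \sum_{t \in T_1}(t^{-1}\cdot w)|_{E_0}$, pigeonholing gives $|A_1| \geq 2^{a_1-1}/3$. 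Interpreting the hypothesis for $n = 0$ (or appealing to the remark that the $a_n$'s may be chosen sufficiently large by passing to a subsequence) gives $a_1 \geq 2b_0 + 3 = 5$, so $|A_1| \geq 2^4/3 > 4 = 3^{b_0}+1$.

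\textbf{Inductive step.} Assuming $|A_n| \geq 3^{b_{n-1}}+1$, Lemma \ref{lem:A_n_big} gives
\begin{equation*}
|A_{n+1}| \;\geq\; \frac{(|A_n|-1)^{a_{n+1}-1}}{3^{b_n}} \;\geq\; \frac{3^{b_{n-1}(a_{n+1}-1)}}{3^{b_n}} \;=\; 3^{b_{n-1}(a_{n+1}-1) - b_n}.
\end{equation*}
Using the multiplicative identity $b_n = a_n b_{n-1}$, the exponent simplifies to $b_{n-1}(a_{n+1} - 1 - a_n)$. The hypothesis $a_{n+1} \geq 2b_n + 3 \geq 2a_n + 3$ gives $a_{n+1} - 1 - a_n \geq a_n + 2$, hence
\begin{equation*}
b_{n-1}(a_{n+1} - 1 - a_n) \;\geq\; b_{n-1}(a_n + 2) \;=\; b_n + 2b_{n-1} \;\geq\; b_n + 2,
\end{equation*}
so $|A_{n+1}| \geq 3^{b_n + 2} = 9 \cdot 3^{b_n}$, comfortably larger than $3^{b_n} + 1$.

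The only mildly delicate point is the base case, where the hypothesis as stated does not literally constrain $a_1$; I would handle this by invoking the freedom to start the sequence $(a_n)$ at a sufficiently large value (as the authors note preceding equation \eqref{eq:E_n_def}), which costs nothing for the construction. The inductive step itself is a clean calculation: the slack of a factor of roughly $3^{b_{n-1}}$ (equivalently, the $+2$ in the exponent) shows the bound in the lemma is highly non-tight, which is why no careful bookkeeping is needed beyond the two arithmetic identities $b_n = a_n b_{n-1}$ and $a_{n+1} \geq 2b_n + 3$.
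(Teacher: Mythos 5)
Your proof is correct and takes essentially the same route as the paper's: induction on $n$, with the inductive step driven by Lemma~\ref{lem:A_n_big} together with the growth hypothesis \eqref{eq:a_n_big_1}. The algebraic simplification of the exponent as $b_{n-1}(a_{n+1}-1-a_n)$ is a cleaner presentation than the paper's chain of inequalities (which contains a typo, $3^{2n}$ for what should be $3^{2b_{n-1}}$), but the content is the same.

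Your observation about the base case is a genuine catch, not just caution: the paper's base case ``$|A_0| = 3 > 3^0+1 = 3^{b_0}+1$'' is internally inconsistent ($3^{b_0}+1 = 4$ with $b_0=1$), and more substantively, the inductive step from $n=0$ to $n=1$ requires $a_1 \ge 2b_0 + 3 = 5$, which the stated hypothesis (quantified over $n\ge 1$) does not provide. With $a_1 = 4$ one gets $|A_1|=3 < 4 = 3^{b_0}+1$, as the paper's own example illustrates, so the lemma as literally stated can fail. Your fix --- starting the induction at $n=1$ and invoking the freedom to take $a_1$ large (which the paper exercises anyway in the final proposition, where ``$a_1, a_2, \ldots$ are sufficiently big'' is assumed separately) --- is the right repair; one could equivalently read the hypothesis as holding for $n\ge 0$.
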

\begin{proof}
Proceed  by induction:
For $n=0$, $b_0=1$ and $A_0 = \AAA$ so $|A_0| = 3 > 3^0+1=3^{b_0}+1$, so \eqref{eq:A_n_LB} is true for $n=0$.
%If  \eqref{eq:a_n_big_1} holds for then $|A_{n+1}| \ge 3^{b_{n}}+1$.
Assume by induction that \eqref{eq:A_n_LB} holds for fixed $n$.
In particular  $|A_n| \ge 3$. Thus by Lemma \ref{lem:A_n_big} and \eqref{eq:a_n_big_1}
$$ |A_{n+1}| \ge  \frac{1}{3^{b_{n}}}3^{b_{n-1}(a_{n+1}-1)} \ge 3^{-b_{n}}\cdot 3^{b_{n-1}(2b_{n} +2)} \ge 3^{(2b_{n-1}-1)b_n}\cdot 3^{2n}\ge 3^{b_{n}} +1.$$
%This proves $(i)$.
%The assertion (ii) follows from (i) by observing that $\{b_n\}_{n=0}^\infty$ is a strictly i%increasing sequence of natural numbers so $b_{n-1} \ge n -1$.
\end{proof}

From now on assume \eqref{eq:a_n_big_1} holds, and so by Lemma \ref{lem:A_n_not_empty} $A_n \ne \emptyset$.
For $n \in \mathbb{N}$, define $R_n \subset \AAA^\Gamma$ as follows:
\begin{equation}\label{eq:R_n_def}
R_n := \left\{ x \in \AAA^\Gamma:~ (g \cdot x)\mid_{E_n} \in A_n\mbox{ for every } g \in \Gamma_n\right\}.
\end{equation}

%Because  $gE_n \cap g'E_n$ for every pair of distinct $g$,$g'$ in $\Gamma_n$, it follows that $R_n$ is naturally  homeomorphic  to $A_n^{\Gamma_n}$. In particular $R_n \ne \emptyset$ as $A_n \ne %\emptyset$.
%$R_n$ is a non-empty compact subset of $\AAA^\Gamma$ and that $g \cdot R_n = R_n$ for all $g \in \Gamma_n$.
Clearly $R_n$ is a compact $\Gamma_n$-invariant subset of $\AAA^\Gamma$. By \eqref{eq:G_n_E_n},  the action $\act{\Gamma_n}{R_n}$  is trivially isomorphic to the shift action $\act{\Gamma_n}{A_n^{\Gamma_n}}$.

\begin{lem}\label{lem:R_n_nested}
For every $n \ge 0$
$$R_{n+1} \subset R_{n}.$$
\end{lem}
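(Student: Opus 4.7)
Take $x \in R_{n+1}$ and fix $g \in \Gamma_n$; the plan is to express $(g \cdot x)|_{E_n}$ as $t^{-1} \cdot w|_{E_n}$ for an appropriate $t \in T_{n+1}$ and $w \in A_{n+1}$, after which membership in $A_n$ is built into the definition of $A_{n+1} \subseteq B_{n+1}$.

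First I would observe that $\{t^{-1}\Gamma_{n+1}\}_{t\in T_{n+1}}$ is also a partition of $\Gamma_n$ into left $\Gamma_{n+1}$-cosets. Indeed, $\Gamma_n = \biguplus_{t \in T_{n+1}} t \Gamma_{n+1}$, so inverting each coset and using that $\Gamma_{n+1}$ is normal in $\Gamma$ gives $\Gamma_n = \biguplus_{t \in T_{n+1}} \Gamma_{n+1}t^{-1} = \biguplus_{t \in T_{n+1}} t^{-1}\Gamma_{n+1}$. Therefore the given $g \in \Gamma_n$ admits a unique decomposition $g = t^{-1}h$ with $t \in T_{n+1}$ and $h \in \Gamma_{n+1}$.

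Next, since $h \in \Gamma_{n+1}$ and $x \in R_{n+1}$, the pattern $w := (h \cdot x)|_{E_{n+1}}$ lies in $A_{n+1}$. Because $A_{n+1} \subseteq B_{n+1} \subseteq B'_{n+1}$, the definition of $B_{n+1}$ yields $t'^{-1} \cdot w|_{E_n} \in A_n$ for every $t' \in T_{n+1}$; the case $t' \ne 1$ is the defining condition of $B'_{n+1}$, and the case $t' = 1$ follows from $w|_{E_n} = w^{(n)} \in A_n$. In particular, $t^{-1}\cdot w|_{E_n} \in A_n$.

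To finish, I would verify the identity $(g \cdot x)|_{E_n} = t^{-1} \cdot w|_{E_n}$ by direct computation: for $f \in E_n$, one has $tf \in T_{n+1}E_n = E_{n+1}$, so $(t^{-1}\cdot w)_f = w_{tf} = (h\cdot x)_{tf} = x_{h^{-1}tf}$; on the other hand $(g\cdot x)_f = x_{g^{-1}f} = x_{h^{-1}tf}$ since $g^{-1} = h^{-1}t$. Hence $(g \cdot x)|_{E_n} \in A_n$, and as $g \in \Gamma_n$ was arbitrary, $x \in R_n$.

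The only real obstacle is picking the right decomposition: the naive choices $g = ht$ or $g = th$ with $h \in \Gamma_{n+1}$ and $t \in T_{n+1}$ do not line $(g \cdot x)|_{E_n}$ up with the template $t^{-1} \cdot w|_{E_n}$ appearing in the definition of $B'_{n+1}$, whereas $g = t^{-1}h$ does. Normality of $\Gamma_{n+1}$ in $\Gamma$ is what makes this third decomposition available, and it is exactly the ingredient that makes $(R_n)_{n=1}^{\infty}$ a decreasing sequence.
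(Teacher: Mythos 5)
Your argument is correct, and it is the paper's argument with a change of bookkeeping rather than a genuinely different route. The paper decomposes $g = g't$ with $g' \in \Gamma_{n+1}$, $t \in T_{n+1}$ (i.e., exactly the ``naive'' $g=ht$ you dismiss), sets $w := ((g')^{-1} \cdot x)|_{E_{n+1}} \in A_{n+1}$, and checks $(g^{-1}\cdot x)|_{E_n} = (t^{-1}\cdot w)|_{E_n} \in A_n$, which suffices because $g\mapsto g^{-1}$ is a bijection of $\Gamma_n$. Writing $g = t^{-1}h$ as you do is the same computation applied to $g^{-1}$ (with $h = (g')^{-1}$); it lines $(g\cdot x)|_{E_n}$ up directly with the $B'_{n+1}$-template and avoids the final relabeling, but it is not a new idea. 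You are, however, more explicit than the paper in separating the cases $t=1$ (where one needs $w|_{E_n}=w^{(n)}\in A_n$, coming from the $B_{n+1}$-condition together with $w^{(n)}\in A_n$ by construction) and $t\neq 1$ (the $B'_{n+1}$-condition); the paper compresses this into the phrase ``by definition of $A_{n+1}$,'' and it is good that you spelled it out. Finally, your appeal to normality of $\Gamma_{n+1}$ to pass from right to left cosets is valid but not needed here: \eqref{eq:T_n_g_N} already asserts that $T_{n+1}$ is a simultaneous set of left and right coset representatives, so $\Gamma_n = \biguplus_{t\in T_{n+1}} t^{-1}\Gamma_{n+1}$ follows by inverting $\Gamma_n = \biguplus_{t\in T_{n+1}}\Gamma_{n+1}t$.
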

\begin{proof}
Suppose $x \in R_{n+1}$.  Choose $g \in \Gamma_{n}$. By \eqref{eq:T_n_g_N}, $\Gamma_{n} =  \Gamma_{n+1}T_{n+1}$. Thus there exists $g' \in \Gamma_{n+1}$ and $t \in T_{n+1}$ so that $g= g't$.
By definition of $R_{n+1}$, $w := ((g')^{-1} \cdot x )\mid_{E_{n+1}} \in A_{n+1}$. By definition of $A_{n+1}$, it follows that $(t^{-1} \cdot w) \mid_{E_{n}} \in A_{n}$. But
$(t^{-1} \cdot w ) \mid_{E_{n}} = (g^{-1} \cdot x )\mid_{E_{n}}$, so $x \in R_{n}$.
\end{proof}

\begin{lem}\label{lem:R_n_trans_disjoint}
%For every $n \ge 1$,
%\begin{equation}\label{eq:R_n_trans_distjoint}
%R_n \cap \sigma^j R_n = \emptyset.
% x_{k+1}\ldots x_{k+b_n} \in A_n
%\end{equation}
For every $n \ge 1$ , $x \in R_n$ we have
\begin{equation}\label{eq:R_n_trans_distjoint}
(g \cdot x )\mid_{E_n} \in A_n \mbox{ if and only if } g \in \Gamma_n.
\end{equation}
% if and only if $k \in b_n \ZZ$.
% for every  $j \in \ZZ \setminus b_j \ZZ$.
\end{lem}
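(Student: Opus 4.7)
The ``if'' direction is immediate from the definition \eqref{eq:R_n_def}. For the ``only if'' direction, I will proceed by induction on $n$, taking as base the trivial case $n=0$ (where $E_0 = \{1\}$, $A_0 = \AAA$, and $\Gamma_0 = \Gamma$ make both sides of the claimed equivalence automatic), and deducing the statement for $n+1$ from the statement for $n$.

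Suppose $x \in R_{n+1}$ and $g \in \Gamma$ satisfy $(g \cdot x)\mid_{E_{n+1}} \in A_{n+1}$. Since $A_{n+1} \subset B_{n+1}$, the restriction $(g \cdot x)\mid_{E_n}$ must equal $w^{(n)}$, which lies in $A_n$. By Lemma \ref{lem:R_n_nested} we have $x \in R_n$, so the inductive hypothesis yields $g \in \Gamma_n$. Using the coset decomposition $\Gamma_n = \biguplus_{t \in T_{n+1}} \Gamma_{n+1} t$, I write $g = \gamma h$ with $\gamma \in \Gamma_{n+1}$ and $h \in T_{n+1}$; the remaining task is to show $h = 1$.

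Suppose for contradiction that $h \in T_{n+1} \setminus \{1\}$. The definition of $B_{n+1}'$ then forces $(h^{-1} \cdot (g \cdot x))\mid_{E_n} \in A_n \setminus \{w^{(n)}\}$, so this restriction differs from $w^{(n)}$. On the other hand, $h^{-1} g = h^{-1} \gamma h$ lies in $\Gamma_{n+1}$ by the normality $\Gamma_{n+1} \lhd \Gamma_n$ asserted in \eqref{eq:Res_fin}, so from $x \in R_{n+1}$ we get $((h^{-1} g) \cdot x)\mid_{E_{n+1}} \in A_{n+1} \subset B_{n+1}$, and therefore its restriction to $E_n$ must equal $w^{(n)}$. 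This contradiction forces $h = 1$, whence $g = \gamma \in \Gamma_{n+1}$, completing the inductive step.

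The main obstacle is essentially bookkeeping: carefully unpacking the nested definitions $A_{n+1} \subset B_{n+1} \subset B_{n+1}'$ to extract precisely the two constraints ``$w\mid_{E_n} = w^{(n)}$'' and ``$(t^{-1}\cdot w)\mid_{E_n} \ne w^{(n)}$ for $t \in T_{n+1} \setminus \{1\}$'', and recognizing that the normality of $\Gamma_{n+1}$ in $\Gamma_n$ is exactly what keeps $h^{-1}\gamma h$ inside $\Gamma_{n+1}$ so that the $R_{n+1}$-condition on $x$ can be applied to $h^{-1}g$.
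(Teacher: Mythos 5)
Your proof is correct, and it follows the same inductive template as the paper: induction on $n$ with the coset decomposition $\Gamma_n = \biguplus_{t\in T_{n+1}}\Gamma_{n+1}t$ and the constraints encoded in $B_{n+1}$ and $B_{n+1}'$ doing the work. There is one small but genuine divergence in how the contradiction is reached in the inductive step. Having written $g = \gamma h$ with $\gamma\in\Gamma_{n+1}$ and $h\in T_{n+1}\setminus\{1\}$, you invoke normality $\Gamma_{n+1}\lhd\Gamma_n$ to keep $h^{-1}g = h^{-1}\gamma h$ inside $\Gamma_{n+1}$, and then play the $R_{n+1}$-constraint on $h^{-1}g$ (forcing $((h^{-1}g)\cdot x)\mid_{E_n}=w^{(n)}$) against the $B_{n+1}'$-constraint on $(g\cdot x)\mid_{E_{n+1}}$ (forcing $\neq w^{(n)}$). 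The paper (in its shifted indexing) instead writes $g = g't$ with $g'\in\Gamma_n$, applies the $B_n'$-condition to $v := ((g')^{-1}\cdot x)\mid_{E_n}\in A_n$ and uses the identity $(t^{-1}\cdot v)\mid_{E_{n-1}}=(g^{-1}\cdot x)\mid_{E_{n-1}}$ to conclude $(g^{-1}\cdot x)\mid_{E_n}\notin A_n$; since $\Gamma_{n-1}\setminus\Gamma_n$ is closed under inversion this suffices. The paper's route thus avoids an explicit appeal to normality at this point (though normality is already baked into the simultaneous left/right coset decomposition of \eqref{eq:T_n_g_N}), while yours is arguably the more direct ``assume $h\ne 1$, derive $w^{(n)}\ne w^{(n)}$'' contradiction. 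You also derive $g\in\Gamma_n$ directly from $(g\cdot x)\mid_{E_n}=w^{(n)}\in A_n$ via the inductive hypothesis, whereas the paper phrases the same step contrapositively; this is a purely cosmetic difference. Both arguments are sound.
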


\begin{proof}
We prove this by induction on $n$.
%Fix $x \in R_n$.
To start the induction, note that if $x \in R_1$ then $x_{g}=0$ if and only if $g \in \Gamma_1$ so $x \cdot g \in R_1 $ if and only if $g \in \Gamma_1$.

For the induction step, fix $x \in R_n$. From the definition of $R_n$ it is clear that if $g \in \Gamma_n$ then $(g \cdot x)\mid_{E_n} \in A_n$.  By Lemma \ref{lem:R_n_nested} $R_{n} \subset R_{n-1}$, so by induction hypothesis, for every $g \in \Gamma \setminus  \Gamma_{n-1}$  we have
$ (g \cdot x)\mid_{E_{n-1}} \not\in A_{n-1}$.
%Choose    $g \in \Gamma \setminus  \Gamma_{n-1}$.% then $g \in \Gamma %\setminus  \Gamma_{n-1}$. Let $v := (g \cdot x )\mid_{E_n}$
%It follows that $v \mid_{E_{n-1}} \ne v^{(n-1)} \in A_{n-1}$, so $v
 %\not\in A_n$, so $x \not \in R_{n-1}$.

It remains to show that $g^{-1} \cdot x  \not \in R_{n-1}$ for every  $g \in \Gamma_{n-1} \setminus \Gamma_n$.
%Because $R_{n} = R_{n+1}\cdot g $ for every $g \in G_n$, it is enough to show  \eqref{eq:R_n_trans_distjoint} for $g \in T_n \setminus \{1\}$.
Choose $g \in \Gamma_{n-1} \setminus \Gamma_n$. Because $\Gamma_{n-1} =  \biguplus_{t \in T_{n-1}} \Gamma_{n} t$,  there exists  $g' \in \Gamma_n$ and $t \in T_{n-1} \setminus \{1\}$ so that $g = g't$.
Let $v : = ((g')^{-1} \cdot x)\mid_{E_n}$.
Because $x \in R_n$, it follows from the definition of $R_n$ that $ v \in A_{n}$,
so
$(t^{-1} \cdot v)\mid_{E_{n-1}} \in A_{n-1}  \setminus \{w^{(n-1)}\}$.
But  $(t^{-1} \cdot v)\mid_{E_{n-1}} = (g^{-1} \cdot x )\mid_{E_{n-1}}$, so  $g^{-1} \cdot x \not \in R_{n}$.
\end{proof}

Define
\begin{equation}\label{X_n_def}
X_n := \bigcup_{g \in E_n}g \cdot R_n.
\end{equation}

$X_n$ is a finite union of  compact subsets so it is compact.
%To see that $X_n$ is $G$-invariant, fix $h \in G$,
%and note that $G = \biguplus_{g \in E_n}G_n \cdot h$.
%$$X_n h =\left( \bigcup_{g \in E_n} R_n \cdot gh \right)$$
%Also note that if $h \in gG_n$ then $R_n \cdot h = R_n \cdot g$.
%It follows that $X_nh = X_n$.
Also, because $R_n$ is $\Gamma_n$-invariant, it follows that $X_n$ is $\Gamma$-invariant.
%Furthermore,   $X_n$ is a subshift of finite type, although we will not use this fact.

\begin{lem}\label{lem:X_n_nested}
For every $n \ge 1$, $X_{n+1} \subset X_n$.
\end{lem}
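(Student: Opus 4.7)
The plan is to exploit two ingredients already in place: the containment $R_{n+1} \subset R_n$ established in Lemma \ref{lem:R_n_nested}, and the fact that $E_n$ is a complete set of coset representatives for $\Gamma_n$ in $\Gamma$ (which is the content of \eqref{eq:G_n_E_n}). Since $\Gamma_n$ is a \emph{normal} subgroup of $\Gamma$, the decomposition
$$\Gamma = \biguplus_{k \in E_n} k\Gamma_n$$
holds, so every element of $\Gamma$ admits a unique factorization of the form $k \cdot s$ with $k \in E_n$ and $s \in \Gamma_n$. In particular, this applies to elements of $E_{n+1} \subset \Gamma$.

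The argument then proceeds as follows. Take an arbitrary $x \in X_{n+1}$, and by definition \eqref{X_n_def} write $x = g \cdot y$ with $g \in E_{n+1}$ and $y \in R_{n+1}$. Using the coset decomposition above, find the unique $k \in E_n$ with $g \in k\Gamma_n$, and set $s := k^{-1}g \in \Gamma_n$. Then $x = g \cdot y = k \cdot (s \cdot y)$. By Lemma \ref{lem:R_n_nested} we have $y \in R_n$, and directly from \eqref{eq:R_n_def} the set $R_n$ is $\Gamma_n$-invariant (if $x \in R_n$ and $h \in \Gamma_n$, then for every $g' \in \Gamma_n$ we have $g'h \in \Gamma_n$, so $(g'h \cdot x)\mid_{E_n} \in A_n$, which shows $h \cdot x \in R_n$). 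Consequently $s \cdot y \in R_n$, and hence $x = k \cdot (s \cdot y) \in k \cdot R_n \subseteq X_n$, as required.

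There is no real obstacle here; the only thing to keep straight is that normality of $\Gamma_n$ is essential for the factorization $g = k \cdot s$ with $k \in E_n$ on the \emph{left}. Without normality we would only have $\Gamma = \biguplus_{k \in E_n} \Gamma_n k$, and writing $g = s' \cdot k$ would force us to evaluate $s' \cdot (k \cdot y)$, but $k \cdot y$ need not lie in $R_n$ since $k \notin \Gamma_n$ in general. The normality assumption present in \eqref{eq:Res_fin} removes this issue and makes the lemma essentially a one-line verification once $R_{n+1} \subset R_n$ and the $\Gamma_n$-invariance of $R_n$ are noted.
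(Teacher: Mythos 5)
Your proof is correct and rests on the same core ingredient, Lemma \ref{lem:R_n_nested}, but it organizes the factorization step differently from the paper. The paper uses the multiplicative identity $E_{n+1}=T_{n+1}E_n$ from \eqref{eq:E_n_def}, so a generic $g\in E_{n+1}$ factors as $g=te$ with $t\in T_{n+1}\subset\Gamma_n$ on the \emph{left} and $e\in E_n$ on the right; it then absorbs the $T_{n+1}$-factor using the $\Gamma$-invariance of $X_n$ that was noted just before the lemma. You instead factor $g=ks$ with $k\in E_n$ on the left and $s\in\Gamma_n$ on the right, via the left-coset decomposition $\Gamma=\biguplus_{k\in E_n}k\Gamma_n$ from \eqref{eq:G_n_E_n}, and absorb the $\Gamma_n$-part directly into $R_n$ using its $\Gamma_n$-invariance; you therefore never need to invoke the $\Gamma$-invariance of $X_n$ at all. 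Normality of $\Gamma_n$ enters both routes -- for the paper it is what makes $X_n$ $\Gamma$-invariant, for you it is what gives the left-coset form of \eqref{eq:G_n_E_n} -- so you are right that it is not a dispensable hypothesis, though your explanation slightly overstates the asymmetry, since the paper's version would also fail without it. Both proofs are about equally short; yours is a touch more self-contained (it reproves the relevant fragment of $\Gamma$-invariance inline), while the paper's is slightly more conceptual in that it works with unions of sets rather than tracking a single element.
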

\begin{proof}
By Lemma \ref{lem:R_n_nested} $R_{n+1} \subset R_n$ so
$$X_{n+1} = \bigcup_{g \in E_{n+1}}g\cdot R_{n+1}   \subset \bigcup_{g \in E_{n+1}}g\cdot R_{n} $$
Because $E_{n+1}= T_{n+1} E_n $,
$$\bigcup_{g \in E_{n+1}}g \cdot R_{n}  = \bigcup_{t \in T_{n+1}} t \cdot \left(  \bigcup_{g \in E_{n}} g\cdot R_n \right)  =\bigcup_{t \in T_{n+1}} t\cdot  X_n.$$
Note that  $t \cdot X_n  = X_n$ for $t \in T_{n+1}$.
We conclude that indeed $X_{n+1} \subset X_n$.
\end{proof}

%*********
%The first requirement assures that two words $u,v \in A_n$ can not  partially overlap each other.

We now define:
\begin{equation}
X:= \bigcap_{n=1}^\infty X_n.
\end{equation}

%We first claim that  the subshift $X$ has no off-diagonal asymptotic pairs (for any sequence $(a_n)_{n=1}^\infty$ as above).

%The claim follows from the lemma below:

\begin{lem}\label{lem:X_n_assympt}
Suppose $(x,y) \in X_{n+1} \times X_{n+1}$ and $F \subset \Gamma$ are such that
\begin{equation}\label{eq:x_y_sim}
x_g=y_g \mbox{ for all  } g \in \Gamma \setminus F.
\end{equation}
In addition suppose that
$g_1\Gamma_n \ne g_2\Gamma_n$ for every pair of distinct elements $g_1,g_2 \in F$.
%*** should $E_{n-1}$ to $E_n$? ***
Then $x=y$.
\end{lem}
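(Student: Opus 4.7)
The plan is to locate $x$ and $y$ within the decomposition $X_{n+1}=\bigcup_{g\in E_{n+1}}g\cdot R_{n+1}$, argue that they land in the same piece, and then extract $x=y$ from the summation identity packaged into $A_{n+1}=C_{s^{(n)},n+1}$. Let $g_x,g_y\in E_{n+1}$ be such that $x\in g_x\cdot R_{n+1}$ and $y\in g_y\cdot R_{n+1}$; these are uniquely defined, since Lemma \ref{lem:R_n_trans_disjoint} at level $n+1$ makes the pieces pairwise disjoint. I will use the sharper consequence of that lemma that $(g\cdot x)\mid_{E_{n+1}}\in A_{n+1}$ if and only if $g\in\Gamma_{n+1}g_x^{-1}$, and likewise for $y$.

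To prove $g_x=g_y$, I would exploit finiteness of $F$ against infiniteness of $\Gamma_{n+1}$: the set $E_{n+1}F^{-1}$ is finite, so there exists $g\in\Gamma_{n+1}g_x^{-1}$ with $g^{-1}E_{n+1}\cap F=\emptyset$. For such $g$, $x$ and $y$ agree on $g^{-1}E_{n+1}$, whence $(g\cdot x)\mid_{E_{n+1}}=(g\cdot y)\mid_{E_{n+1}}$; this common restriction lies in $A_{n+1}$ because $g\in\Gamma_{n+1}g_x^{-1}$, and applying the biconditional to $y$ gives $g\in\Gamma_{n+1}g_y^{-1}$. Hence the cosets $\Gamma_{n+1}g_x^{-1}$ and $\Gamma_{n+1}g_y^{-1}$ intersect and thus coincide, and since $E_{n+1}$ is a transversal for $\Gamma_{n+1}$ in $\Gamma$ this forces $g_x=g_y$.

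Once $g_x=g_y$, translating by $g_x^{-1}$ reduces to $x,y\in R_{n+1}$, with the support of $x-y$ still meeting each $\Gamma_n$-coset at most once (the property is preserved because $\Gamma_n$ is normal). Unpacking the sum condition built into $C_{s^{(n)},n+1}$, for every $\tau\in\Gamma_{n+1}$ and $f\in E_n$ one has
\[
\sum_{t\in T_{n+1}}x_{\tau tf}=s^{(n)}_f=\sum_{t\in T_{n+1}}y_{\tau tf},
\]
so $\sum_{t\in T_{n+1}}(x_{\tau tf}-y_{\tau tf})=0$ in $\AAA=\mathbb{Z}/3\mathbb{Z}$. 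The decisive geometric observation is that the $a_{n+1}$ positions $\{\tau tf:t\in T_{n+1}\}$ all lie in a single $\Gamma_n$-coset, because $T_{n+1}\subset\Gamma_n$ and $\Gamma_n$ is normal in $\Gamma$. By the hypothesis on $F$, at most one summand above is therefore nonzero; but a lone nonzero element of $\mathbb{Z}/3\mathbb{Z}$ cannot equal $0$, so every summand must vanish. Since $\Gamma=\Gamma_{n+1}T_{n+1}E_n$, every $h\in\Gamma$ occurs as some $\tau tf$, and $x=y$ follows.

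The step that needs the most care is the first: showing $g_x=g_y$. It requires reading Lemma \ref{lem:R_n_trans_disjoint} as determining $g_x$ from $x\mid_{E_{n+1}}$ alone and then a pigeonhole argument against the finite set $F$. The subsequent summation argument is essentially forced, the two mechanical ingredients being normality of $\Gamma_n$ (which packs each $T_{n+1}$-block into one $\Gamma_n$-coset) and the arithmetic of $\AAA=\mathbb{Z}/3\mathbb{Z}$ (which prevents an isolated nonzero entry from satisfying the equation).
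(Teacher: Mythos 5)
Your proposal is correct and follows essentially the same route as the paper's proof: locate the appropriate translates of $x$ and $y$ in $R_{n+1}$ using the finiteness of $F$ against the infiniteness of $\Gamma_{n+1}$ together with Lemma \ref{lem:R_n_trans_disjoint}, then invoke the sum condition encoded in $A_{n+1}=C_{s^{(n)},n+1}$, the normality of $\Gamma_n$ (which packs each $T_{n+1}$-translate block into a single $\Gamma_n$-coset), and the arithmetic of $\mathbb{Z}/3\mathbb{Z}$. The only cosmetic difference is that you first prove $g_x=g_y$ and then translate both configurations into $R_{n+1}$, whereas the paper fixes the translate $f$ for $x$, shows $f\cdot y\in R_{n+1}$ as well, and carries $f$ along through the summation argument; the underlying ideas coincide.
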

\begin{proof}
Let $(x,y) \in   X_{n+1} \times X_{n+1}$ and $F \subset \Gamma$ be as above. By \eqref{X_n_def} there exists $f,\tilde f \in E_{n+1}$ so that  $f \cdot x \in  R_{n+1} $ and $\tilde f \cdot y \in  R_{n+1} $.
Because $f \cdot x \in R_{n+1}$, it follows that
$(g\cdot f \cdot x)\mid_{E_{n+1}} \in A_{n+1}$ for all $g \in \Gamma_{n+1}$.

By \eqref{eq:x_y_sim}
\begin{equation}\label{eq:x_y_t_sim}
( f \cdot x)_g=(f\cdot y )_g \mbox{ for all  } g \in \Gamma \setminus (f F)
\end{equation}
Because $\Gamma_{n+1}$ is infinite and $F$ is finite, %Choose $g^{-1} \in %\Gamma_{n+1} \setminus fE_{n}$.
by \eqref{eq:x_y_t_sim} there are infinitely many $g \in \Gamma_{n+1}$ so that %$$(y \cdot \tilde e \cdot \tilde e^{-1} e \cdot g) =
$$ (g^{-1} \cdot f \cdot y)\mid_{E_{n+1}} = (g^{-1} \cdot f \cdot x)\mid_{E_{n+1}} \in A_{n+1}.$$
By Lemma \ref{lem:R_n_trans_disjoint} it follows that %$\tilde e^{-1}e g \in G_{n+1}$  so $\tilde e^{-1}e \in G_{n+1}$ so
$f \cdot y\in  R_{n+1}$.
%We claim that for every $g \in  G$
%Indeed, by \eqref{eq:x_y_t_sim}

Because $f \cdot x,f \cdot y \in R_{n+1}$ it follows that for every $g \in \Gamma_{n+1}$,
 $$(g^{-1} \cdot f \cdot x)\mid_{E_{n+1}},(g^{-1} \cdot f \cdot y)\mid_{E_{n+1}} \in A_{n+1}.$$
Thus for every $r \in E_{n+1}$
\begin{equation}\label{eq:sum_eq_u_v}
 \sum_{t \in T_{n+1}} (t^{-1} \cdot g^{-1} \cdot f \cdot x)_r =\sum_{ t \in T_{n+1}}  (t^{-1} \cdot g^{-1} \cdot f \cdot y)_r = s^{(n)}_r.
\end{equation}
Equivalently,
\begin{equation}\label{eq:sum_eq_u_v}
 \sum_{t \in T_{n+1}} x_{f^{-1} \cdot g \cdot t \cdot r} =\sum_{ t \in T_{n+1}}  y_{f^{-1} \cdot g \cdot t \cdot r} = s^{(n)}_r.
\end{equation}

We claim that there exists at most one $t \in T_{n+1}$ so that $f^{-1} \cdot g \cdot t \cdot r \in F$. Indeed, because $T_{n+1} \subset \Gamma_n$, and because $\Gamma_n$ is normal, the set
$\left\{ f^{-1} \cdot g \cdot t \cdot r :~  t \in T_{n+1} \right\}$ is contained in a single $\Gamma_n$-coset.

It follows that there exists at most one $t \in T_{n+1}$ so that   $x_{f^{-1} \cdot g \cdot t \cdot r} \ne y_{f^{-1} \cdot g \cdot t \cdot r}$.
By \eqref{eq:sum_eq_u_v}, it follows that in fact
\begin{equation}\label{eq:u_eq_v}
x_{f^{-1}\cdot g\cdot   t \cdot  r} = y_{f^{-1} \cdot g \cdot t\cdot r} \mbox{ for every } g\in \Gamma_{n+1}~,~ t \in T_{n+1} \mbox{ and } r \in E_{n},
\end{equation}
%In particular we can apply \eqref{eq:u_eq_v} with $t=1$.
 Every $\tilde g \in \Gamma$ can be written as $\tilde g = f^{-1} g t r$ for some $r \in E_{n+1}$, $t \in T_{n+1}$ and $g \in \Gamma_{n+1}$,
so $x_{\tilde g} = y_{\tilde g}$ for every $\tilde g \in \Gamma$.
\end{proof}

Our next goal is to  show that if the sequence $(a_n)_{n=1}^\infty$ grows sufficiently fast, then $h(X)>0$.

To be more specific:

\begin{lem}\label{lem:h_X_n_lower_bound}
If $(a_n)_{n=1}^\infty$ satisfies the assumption in the statement of Lemma \ref{lem:A_n_not_empty} then
\begin{equation}\label{h_X_n_lower_bound}
 h(\act{\Gamma}{X_n})\ge \frac{a_1-1}{a_1}\log(2)- \frac{1}{a_1}\log(3) - \sum_{k=2}^n \left(\frac{2}{3^{b_{k-2}}+1}+\frac{2}{a_k}\right)\cdot \log(3).
% \log3 - (2\log 3)\sum_{n=1}^\infty \frac{1}{a_n}+\sum_{n=2}^\infty \log\left( 1- \frac{1}{3^{b_{n-2}}+1}\right) + \log(2/3).
\end{equation}
\end{lem}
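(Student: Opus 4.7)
The plan is to reduce the topological entropy of $\act{\Gamma}{X_n}$ to a calculation involving $|A_n|$, and then telescope the recursion implicit in Lemma \ref{lem:A_n_big} while controlling the error terms via Lemma \ref{lem:A_n_not_empty}.

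First I would establish the entropy identity
$$h(\act{\Gamma}{X_n}) = \frac{\log |A_n|}{b_n}.$$
Since $\Gamma_n$ is normal of finite index $b_n$ in $\Gamma$, each translate $g \cdot R_n$ (for $g \in E_n$) is $\Gamma_n$-invariant, and the map $x \mapsto ((\gamma^{-1}\cdot x)\mid_{E_n})_{\gamma \in \Gamma_n}$ gives a $\Gamma_n$-equivariant isomorphism between $R_n$ and the full shift $A_n^{\Gamma_n}$, whose entropy equals $\log|A_n|$. Because the topological entropy of a finite union of invariant subsystems is the maximum of their entropies, $h_{\Gamma_n}(X_n) = \log|A_n|$. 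Applying the standard identity $h_{\Gamma}(\cdot) = [\Gamma:\Gamma_n]^{-1}\, h_{\Gamma_n}(\cdot)$ for finite-index subgroups of amenable group actions yields the displayed formula.

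Next I would handle the base case $n=1$. Elements of $B_1$ have first coordinate equal to $w^{(0)}=0$ and the remaining $a_1-1$ coordinates in $\{1,2\}$, so $|B_1| = 2^{a_1-1}$. Since $B_1 = \biguplus_{s \in \AAA}C_{s,1}$, pigeonhole gives $|A_1|=|C_{s^{(0)},1}|\ge 2^{a_1-1}/3$, hence
$$\frac{\log|A_1|}{b_1} \ge \frac{(a_1-1)\log 2 - \log 3}{a_1},$$
which matches the first two terms of \eqref{h_X_n_lower_bound}.

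For the inductive step, taking logarithms in Lemma \ref{lem:A_n_big} and dividing by $b_k = a_k b_{k-1}$ gives
$$\frac{\log |A_k|}{b_k} \ge \frac{a_k-1}{a_k}\cdot\frac{\log(|A_{k-1}|-1)}{b_{k-1}} - \frac{\log 3}{a_k}.$$
I would combine this with the estimate $\log(|A_{k-1}|-1) \ge \log|A_{k-1}| - 2/|A_{k-1}|$, the lower bound $|A_{k-1}| \ge 3^{b_{k-2}}+1$ from Lemma \ref{lem:A_n_not_empty}, and the trivial bound $\log|A_{k-1}|/b_{k-1} \le \log 3$ (since $A_{k-1} \subset \AAA^{E_{k-1}}$ and $|E_{k-1}|=b_{k-1}$) to derive the additive one-step bound
$$\frac{\log|A_k|}{b_k} \ge \frac{\log|A_{k-1}|}{b_{k-1}} - \left(\frac{2}{a_k} + \frac{2}{3^{b_{k-2}}+1}\right)\log 3.$$
Summing this from $k=2$ to $n$ and invoking the base case produces \eqref{h_X_n_lower_bound}.

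The delicate point I expect is the algebraic manipulation in the inductive step, converting the multiplicative recursion $|A_k|\gtrsim(|A_{k-1}|-1)^{a_k-1}/3^{b_{k-1}}$ into the additive telescoping bound with coefficients matching the target; in particular, one must verify that the $(1-1/a_k)$ prefactor and the loss from passing from $|A_{k-1}|$ to $|A_{k-1}|-1$ each contribute errors absorbed by the $2\log 3 /a_k$ and $2\log 3/(3^{b_{k-2}}+1)$ budget respectively. The entropy identity and the pigeonhole base case are routine.
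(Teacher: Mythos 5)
Your proposal is correct and follows essentially the same route as the paper's proof: establish $h(\act{\Gamma}{X_n}) = b_n^{-1}\log|A_n|$ via the full-shift isomorphism, the finite-union-of-invariant-sets principle, and the finite-index subgroup formula; handle $n=1$ by direct counting; and telescope the logarithm of Lemma~\ref{lem:A_n_big} using $|A_{k-1}| \ge 3^{b_{k-2}}+1$ and the trivial bound $h_{k-1} \le \log 3$ to absorb the errors. The only cosmetic difference is that you use the estimate $\log(x-1) \ge \log x - 2/x$ directly, whereas the paper uses the equivalent multiplicative form $\frac{\log(x-1)}{\log x} \ge 1 - 2/x$; both lead to the same one-step bound $h_k - h_{k-1} \ge -\bigl(\tfrac{2}{3^{b_{k-2}}+1} + \tfrac{2}{a_k}\bigr)\log 3$ and the same final inequality.
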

\begin{proof}
Because $\act{\Gamma_n}{tR_n}$  is isomorphic to the full-shift over the alphabet $A_n$ for every $n \ge1 $ and every $t \in E_n$,
 it follows %by Proposition \ref{prop:entropy_full_shift}
that
$$h(\act{\Gamma_n}{tR_n}) = \log |A_n|.$$
Because $X_n=  \bigcup_{t \in E_n}t R_n$ is a finite union of $\Gamma_n$-invariant sets it follows that
$$h(\act{\Gamma_n}{X_n}) = \max_{t \in E_n} h(\act{\Gamma_n}{t R_n}) = \log |A_n|.$$
Thus, by the subgroup formula for entropy  (see \cite[Theorem $2.16$]{MR1878075} for a stronger result):
%(Proposition \ref{prop:entropy_subgroup} above):
$$h(\act{\Gamma}{X_n})   = [\Gamma:\Gamma_n]^{-1} \log |A_n|= b_n^{-1} \log |A_n|$$

Taking logs in  \eqref{eq:A_n_big} we get:
\begin{equation}\label{eq:log_A_n}
\log |A_n| \ge (a_{n} -1) \log\left( |A_{n-1}|- 1\right) - b_{n-1} \log 3 .
\end{equation}
Denote
\begin{equation}
h_n := h(\act{\Gamma}{X_n})= b_n^{-1} \log |A_n| .
\end{equation}
Recall that $|A_0|=3$ and $b_1=a_1$. So substituting  $n=1$ in \eqref{eq:log_A_n} we get:
\begin{equation}\label{eq_h1}
h_1 = \frac{1}{a_1}\log(|A_1|) \ge \frac{a_1-1}{a_1}\log(2)- \frac{1}{a_1}\log(3).
\end{equation}
% By  Lemma \ref{lem:A_n_not_empty} $|A_n| \ge 3$ so $|A_n|-  1 \ge |A_n|/2$.
%Thus
Dividing \eqref{eq:log_A_n} by $b_n$ we get:
$$\frac{1}{b_n}\log |A_n| \ge \frac{(a_n -1)\cdot b_{n-1}}{b_n} \cdot \frac{\log \left(|A_{n-1}|-1 \right)}{b_{n-1}}  -\frac{ b_{n-1}}{b_n} \log 3.$$
Using the relation $b_n = a_n b_{n-1}$ it follows that:
$$h_n \ge \left(1-  \frac{1}{a_n}\right)\left[h_{n-1}\cdot\frac{\log(|A_{n-1}|-1)}{\log |A_{n-1}|}\right] - \frac{1}{a_n}\log 3 .$$
Apply  the estimate:
$$\frac{\log(x-1)}{\log(x)} = 1 -\frac{\log(x)-\log(x-1)}{\log(x)} \ge 1- \frac{1}{(x-1)\log(x)} \ge 1- \frac{2}{x} \mbox{ for } x\ge 2,$$
with $x= |A_{n-1}|$ to get:
$$h_n\ge \left(1-  \frac{1}{a_n}\right)\left[h_{n-1}\left(1-\frac{2}{|A_{n-1}|}\right)\right] - \frac{1}{a_n}\log 3 .$$
Equivalently:
$$h_n - h_{n-1} \ge -\left(\frac{1}{a_n}+\frac{2}{|A_{n-1}|} -\frac{1}{a_n|A_{n-1}|}\right)h_{n-1}- \frac{1}{a_n}\log 3.$$
By  Lemma \ref{lem:A_n_not_empty}, for  every $n \ge 2$:
$$|A_{n-1}| \ge 3^{b_{n-2}}+1.$$
Note that $X_0= (\ZZ/3\ZZ)^\Gamma$ so $h_0= h(\act{\Gamma}{X_0}) = \log 3$. Because the sequence $\{h_n\}_{n=1}^\infty$ is monotone non-increasing $h_n \le \log 3$ for every $n \ge 1$.
We thus have:
\begin{equation}\label{eq:h_diff}
h_k - h_{k-1} \ge  -\left(\frac{2}{3^{b_{k-2}}+1}+\frac{2}{a_k}\right)\cdot \log(3) \mbox{ for every } k \ge 2.
\end{equation}
%It follows that
%$$h(\act{\Gamma}{X_n}) \ge h(\act{\Gamma}{X_{n-1}}) -\frac{2}{a_n}\log 3 + \log(1-\frac{1}{|A_{n-1}|}).$$
%By Lemma \ref{lem:A_n_not_empty},
%$$ \log(1-\frac{1}{|A_{n-1}|}) \ge  \log(1 - \frac{1}{3^{b_{n-2}}+1}) \mbox{ for every } n \ge 2.$$
%Also $b_n \ge  a_n$ so $\frac{\log 2}{b_n}  \le \frac{\log 3}{a_n}$.
%We thus have
%$$h(\act{\Gamma}{X_n}) \ge h(\act{\Gamma}{X_{n-1}}) - (2\log 3) \cdot \frac{1}{a_n}+\log(1 - \frac{1}{3^{b_{n-2}}+1}) \mbox{ for every } n \ge 2,$$
and
%$$h(\act{\Gamma}{X_1}) \ge \log(3) - (2\log 3) \cdot \frac{1}{a_2}+\log(2/3).$$
Summing \eqref{eq:h_diff} over $2 \le k \le n$ we get
$$ h_n \ge h_1 - \sum_{k=2}^n \left(\frac{2}{3^{b_{k-2}}+1}+\frac{2}{a_k}\right)\cdot \log(3).$$
The estimate \eqref{h_X_n_lower_bound} follows using  the estimate \eqref{eq_h1} for $h_1$.
\end{proof}

To conclude our construction:
\begin{prop}
If %$\sum_{n=1}^\infty \frac{1}{a_n} < \frac{1}{3}$ and
 $(a_n)_{n=1}^\infty$ satisfies the assumption in the statement of Lemma \ref{lem:A_n_not_empty}  and $a_1,a_2,\ldots $ are sufficiently big then
 $h(\act{\Gamma}{X})>0 $  and $X$ has no off-diagonal asymptotic pairs.
\end{prop}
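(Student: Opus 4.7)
The plan is to establish the two claims independently. The absence of off-diagonal asymptotic pairs will follow from Lemma~\ref{lem:X_n_assympt} combined with residual finiteness of $\Gamma$, and requires no growth condition on $(a_n)$ beyond that of Lemma~\ref{lem:A_n_not_empty}. Positivity of entropy will use the explicit lower bound of Lemma~\ref{lem:h_X_n_lower_bound} together with a passage-to-the-limit argument relating $h(\act{\Gamma}{X})$ to $\lim_n h(\act{\Gamma}{X_n})$.

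For the asymptotic pair claim, suppose toward a contradiction that $(x,y) \in X \times X$ is an asymptotic pair with $x \ne y$. Since the alphabet $\AAA$ is discrete, the set $F := \{g \in \Gamma : x_g \ne y_g\}$ is finite and non-empty. Consider the finite subset $D := \{g^{-1}h : g, h \in F,\ g \ne h\}$ of $\Gamma \setminus \{1\}$. Because $(\Gamma_n)$ is decreasing with $\bigcap_n \Gamma_n = \{1\}$, there exists $n$ with $D \cap \Gamma_n = \emptyset$, so distinct elements of $F$ lie in distinct $\Gamma_n$-cosets. Since $x, y \in X \subset X_{n+1}$ and agree off $F$, Lemma~\ref{lem:X_n_assympt} forces $x = y$, contradicting $F \ne \emptyset$.

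For the entropy claim I would first show $h(\act{\Gamma}{X}) \ge L$, where $L := \lim_n h(\act{\Gamma}{X_n})$; this limit exists because $h(\act{\Gamma}{X_n})$ is monotone non-increasing by Lemma~\ref{lem:X_n_nested}. By the variational principle for continuous actions of countable amenable groups, pick for each $n$ a $\Gamma$-invariant Borel probability measure $\mu_n$ on $X_n$ with $h_{\mu_n} > h(\act{\Gamma}{X_n}) - 1/n$. By weak-$*$ compactness of the space of $\Gamma$-invariant probability measures on $\AAA^\Gamma$, some subsequence $(\mu_{n_k})$ converges weakly to a $\Gamma$-invariant $\mu$; since the $X_n$ are decreasing closed sets, $\mu$ is supported on $X = \bigcap_n X_n$. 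Upper semi-continuity of $\mu \mapsto h_\mu$ for expansive $\Gamma$-actions, and in particular for $\Gamma$-subshifts on a finite alphabet, yields $h_\mu \ge \limsup_k h_{\mu_{n_k}} \ge L$, and the variational principle on $X$ gives $h(\act{\Gamma}{X}) \ge h_\mu \ge L$.

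It remains to arrange $L > 0$ by a suitable choice of $(a_n)$. Under the hypothesis of Lemma~\ref{lem:A_n_not_empty} the indices $b_k$ grow at least doubly exponentially in $k$, so the tail $\sum_{k \ge 2} \frac{2}{3^{b_{k-2}}+1}$ converges extremely fast. I would choose $a_1$ large enough that $\frac{a_1-1}{a_1}\log 2 - \frac{1}{a_1}\log 3 \ge \frac{1}{2}\log 2$, then select the subsequent $a_k$ satisfying both $a_{k+1} \ge 2 b_k + 3$ and $a_k \ge 2^k$, so that the full tail $\sum_{k\ge 2}\bigl(\frac{2}{3^{b_{k-2}}+1} + \frac{2}{a_k}\bigr)\log 3$ is at most $\frac{1}{4}\log 2$. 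Lemma~\ref{lem:h_X_n_lower_bound} then gives $h(\act{\Gamma}{X_n}) \ge \frac{1}{4}\log 2$ for every $n$, so $L > 0$ and $h(\act{\Gamma}{X}) > 0$. The main subtle point is the inequality $h(\act{\Gamma}{X}) \ge L$: it depends on upper semi-continuity of measure-theoretic entropy for subshifts over amenable groups, a standard but non-trivial fact that prevents the entropy from collapsing when passing to the decreasing intersection.
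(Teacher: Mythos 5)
Your argument for the absence of off-diagonal asymptotic pairs is correct and matches the paper's: both appeal to $\bigcap_n \Gamma_n = \{1\}$ to find $n$ making $g \mapsto g\Gamma_n$ injective on $F$, then invoke Lemma~\ref{lem:X_n_assympt}.

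For positivity of entropy you take a genuinely different route from the paper. The paper gets $h(\act{\Gamma}{X}) = \inf_n h(\act{\Gamma}{X_n})$ directly from upper semi-continuity of \emph{topological} entropy for decreasing intersections of compact invariant sets (citing an appendix of a reference). You instead run the standard variational-principle argument: extract near-maximal invariant measures $\mu_n$ on $X_n$, pass to a weak-$*$ limit $\mu$ supported on $X$, and use upper semi-continuity of $\mu \mapsto h_\mu$ for subshifts to push the bound through. Both are valid; your version is more self-contained (it essentially \emph{proves} the cited topological statement rather than citing it), at the cost of needing upper semi-continuity of measure entropy, which itself relies on the finite alphabet.

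There is, however, a concrete numerical error in your choice of constants. You claim that, after choosing $a_1$ large and the subsequent $a_k$ to grow fast, the tail
$\sum_{k\ge 2}\bigl(\tfrac{2}{3^{b_{k-2}}+1} + \tfrac{2}{a_k}\bigr)\log 3$
can be made at most $\tfrac{1}{4}\log 2$. This is impossible: by definition $b_0 = 1$, so the $k=2$ term alone is
$\tfrac{2}{3^{b_0}+1}\log 3 = \tfrac{1}{2}\log 3 \approx 0.549$,
already exceeding $\tfrac{1}{4}\log 2 \approx 0.173$ (and even $\tfrac{1}{2}\log 2$). No choice of $a_k$ removes this fixed first term. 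The correct conclusion, as in the paper, is that the tail can be driven down to $\tfrac{1}{2}\log 3 + \epsilon$ but no further, and one then checks $\log 2 - \tfrac{1}{2}\log 3 \approx 0.144 > 0$, which is what makes the argument work at all (it relies on $2 > \sqrt 3$). Your structural argument for $L > 0$ survives, but your specific target bound $h(\act{\Gamma}{X_n}) \ge \tfrac{1}{4}\log 2$ does not, and you should replace it with the sharper (and tighter) accounting above.
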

\begin{proof}
Because $X = \bigcap_{n=1}^\infty X_n$, and $X_{n+1} \subset X_n$ it follows from  %Proposition \ref{entropy_decreasing_intersection}
upper semi-continuity of topological entropy (see \cite[Appendix $A$]{ETS:10144219})
that
$$h(\act{\Gamma}{X})= \inf_n  h(\act{\Gamma}{X_n}).$$
By  Lemma \ref{lem:h_X_n_lower_bound},
$$\inf_n h(\act{\Gamma}{X_n}) \ge \frac{a_1-1}{a_1}\log(2)- \frac{1}{a_1}\log(3) - \sum_{k=2}^\infty \left(\frac{2}{3^{b_{k-1}}+1}+\frac{2}{a_k}\right)\cdot \log(3).$$
If $a_1$ is sufficiently big than $ \frac{a_1-1}{a_1}\log(2)- \frac{1}{a_1}\log(3)  > \log(2) -\epsilon$. Also, by choosing $a_2,a_3,\ldots$ to grow sufficiently fast
the series $\sum_{k=2}^\infty \left(\frac{2}{3^{b_{k-2}}+1}+\frac{2}{a_k}\right)$ converges and the sum can be made smaller than $\frac{1}{2}\log(3) + \epsilon$ for any $\epsilon >0$.
This proves $\act{\Gamma}{X}$ has  positive entropy as soon as $a_1,a_2,\ldots $ are sufficiently big.

Let us show that $X$ has no  off-diagonal asymptotic pairs.
Suppose $(x,y) \in X\times X$ is an asymptotic pair.   If follows
that $(x,y)$  satisfy \eqref{eq:x_y_sim} for some finite $F \subset \Gamma$.
Because $\bigcap_n \Gamma_n = \{1\}$ it follows that there exists $n$ so that the map $g \mapsto g\Gamma_n$ is injective of $F$.
 Because $(x,y) \in X \times X \subset X_{n+1} \times X_{n+1}$ it follows from  Lemma \ref{lem:X_n_assympt} that $x=y$.
\end{proof}

\bibliographystyle{abbrv}
\bibliography{asymptotic_pairs}
\end{document}